\theoremstyle{plain}
\newtheorem{corollary}{Corollary}
\newtheorem{example}{Example}
\newtheorem{lemma}{Lemma}
\newtheorem{prop}{Proposition}
\newtheorem{remark}{Remark}
\newtheorem{theorem}{Theorem}
\numberwithin{equation}{section}
\newcommand{\bth}{\begin{corrolary}}
	\newcommand{\ble}{\begin{lemma}}
		\newcommand{\bcor}{\begin{corr}}
			\newcommand{\bdeff}{\begin{deff}}
				\newcommand{\bprop}{\begin{proposition}}
					\newcommand{\ele}{\end{lemma}}
				\newcommand{\ecor}{\end{corr}}
			\newcommand{\edeff}{\end{deff}}
		\newcommand{\eprop}{\end{proposition}}
	\newcommand{\Rn}{{\mathbb R}^n}
	\newcommand{\la}{\lambda}
	\newcommand{\eps}{\varepsilon}
	\newcommand{\supp}{\text{supp }}
	\renewcommand{\Pi}{\varPi}
	\renewcommand{\epsilon}{\varepsilon}
	\newcommand{\dist}{{{\rm dist}}}
	\newcommand{\R}{{\mathbb R}}
	\newcommand{\ls}{\lesssim}
	\newcommand{\M}{M}
	\newcommand{\1}{{\rm 1\hspace*{-0.4ex}%
			\rule{0.1ex}{1.52ex}\hspace*{0.2ex}}}
	\newcommand{\gs}{\gtrsim}
\begin{document}
		\title[]{A few sharp estimates of harmonic functions with applications to Steklov eigenfunctions}
		
		\author{Xing Wang and Cheng Zhang}
		
		\address{Department of Mathematics, Hunan University, Changsha, HN 410012, China}
		\email{xingwang@hnu.edu.cn}
			\address{Mathematical Sciences Center\\
			Tsinghua University\\
			Beijing, BJ 100084, China}
		\email{czhang98@tsinghua.edu.cn}

		%\subjclass[2010]{Primary ; Secondary }
		
		\keywords{}

		\dedicatory{}

		\begin{abstract}
		On smooth compact manifolds with smooth boundary, we first establish the sharp lower bounds for the restrictions of harmonic functions in terms of their frequency functions, by  using a combination of microlocal analysis and frequency function techniques by  Almgren \cite{A79} and Garofalo-Lin \cite{GL1986}. The lower bounds can be saturated by Steklov eigenfunctions on Euclidean balls and a family of symmetric warped product manifolds. Moreover, as in Sogge \cite{SoggeFIO} and Taylor \cite{TaylorPDE2}, we analyze the interior behavior of harmonic functions by constructing a parametrix for the Poisson integral operator and calculate its composition with the spectral cluster. By using microlocal analysis, we obtain several sharp estimates for the harmonic functions whose traces are quasimodes on the boundary. As applications, we establish the almost-orthogonality, bilinear estimates and transversal restriction estimates for Steklov eigenfunctions, and discuss the numerical approximation of harmonic functions.
		%the implications of our results for numerical approximation schemes of solutions to Laplace equations with various boundary conditions.
		\end{abstract}
		
		\maketitle
		
		\section{Introduction}
		
		Let $(\Omega,h)$ be a connected smooth compact   manifold with smooth boundary $(M,g)$, where $\dim\Omega=n+1\ge2$ and $h|_M=g$. The Laplace operators on $\Omega$ and $M$ are denoted by $\Delta$ and $\Delta_g$ respectively. Let $u$ be a harmonic function on $\Omega$ with boundary datum $f$, namely
		\begin{equation}\label{lapdirichlet}
			\begin{cases}
				\Delta u=0 \ \ \text{on}\ \Omega,\\
				u= f\ \ \ \ \text{on}\  \partial\Omega=M.
			\end{cases}
		\end{equation}
		Let $\mathcal{H}$ be the harmonic extension operator (or Poisson integral operator) on $\Omega$. Then $u=\mathcal{H}f$. Let $P\in OPS^1(M)$ be classical and self-adjoint with principal symbol $p(x,\xi)=|\xi|_{g(x)}$ on the boundary. Two important examples of  $P$ are $\sqrt{-\Delta_g}$ and the Dirichlet-to-Neumann (DtN) operator $\mathcal{N}$. If $e_\la$ is an eigenfunction of $\mathcal{N}$ associated with the eigenvalue $\la$, then its harmonic extension $u_\la=\mathcal{H}e_\la$ is called a Steklov eigenfunction. The Steklov eigenvalue problem has seen a surge
		of interest in the past few decades, and there are many open problems in this rapidly expanding area. For the historical background and recent developments, we recommend the survey papers by Girouard-Polterovich \cite{GP17} and Colbois-Girouard-Gordon-Sher \cite{CGGS24}.

		In this paper, we mainly  use microlocal analysis to investigate the interior behavior of $u$ with respect to the frequency of $f$. Specifically, we shall use two types of frequencies. The first type is the frequency function as in the seminal works of Almgren \cite{A79} and Garofalo-Lin \cite{GL1986}. The second type is the spectral frequency described by the associated eigenvalues in its spectral decomposition with respect to $P$. These two types of frequencies coincide when the boundary datum $f$ is an eigenfunction of the Dirichlet-to-Neumann operator  on $M$. We shall analyze them by using the microlocal analysis techniques in Sogge \cite{SoggeFIO} and Taylor \cite{TaylorPDE2}.
		
		\subsection{Geometric quantities}
		First, we introduce the geometric quantities that will be used to state our results. Let $d_h(\cdot,\cdot)$, $d_g(\cdot,\cdot)$ be the Riemannian distance functions on $\Omega$ and $M$ respectively. Let \begin{equation}
			\Omega_t = \{\omega\in \Omega: d_h(\omega,\partial \Omega)\ge t\},\ \  \Omega_t^{\mathsf{c}} = \Omega\setminus\Omega_t,\ \ \Sigma_t = \partial\Omega_t.
		\end{equation} Then $t$ is the distance from $\Sigma_t$ to the boundary.
		
		For each $x \in M$, we have a unique inward geodesic $\gamma_x(t)\subset\Omega$ normal to $M$ such that $\gamma_x(0) = x$, $|\nabla \gamma_x(t)|=1$. Let $\phi(t,x) = \gamma_x(t)$. 
		Let
		\begin{equation}\label{definj}
			\delta= \sup \{\tau>0 :\ \phi:[0,\tau]\times M \rightarrow\Omega \text{ is injective}\}.
		\end{equation}
		Then for any $t < \delta$, $\Sigma_t$ is a smooth hypersurface, and $\gamma_x(t)$ is normal to $\Sigma_t$ for any $x\in M$.

			By \eqref{definj}, we get a bijection $\phi:[0,\delta)\times M \rightarrow \Omega_{\delta}^{\mathsf{c}}$, so we can find a local coordinates system near the boundary adapted to the geodesics normal to boundary. Let $\phi^{-1} = (g_1,g_2)$ with $g_1:\Omega_{\delta}^{\mathsf{c}} \rightarrow M$ and $g_2:\Omega_{\delta}^{\mathsf{c}} \rightarrow [0,\delta)$. We choose a finite covering $M=\cup U_\nu$ and the corresponding coordinate maps $\kappa_\nu:U_\nu\to \tilde U_\nu\subset \mathbb{R}^n$. Let $$\tilde\kappa_\nu:\Omega_\nu = \phi([0,\delta)\times U_\nu)\rightarrow\ \mathbb{R}^{n+1},\ \ p\mapsto (\kappa_\nu(g_2(p)),g_1(p)).$$
		Under this setting, let $x=(x_1,x_2,...,x_n)$ be local coordinates on some open set of $M$. Then for $t\in[0,\delta)$ and fixed $x$, $(t,x_1,x_2,...,x_n)$ will be a geodesic curve starting from $x$ and normal to both $M$ and $\Sigma_t$, so $\partial_t$ is the inward normal derivative of $\Sigma_t$. Specifically, the metric tensor of $\Omega$ can be written as
		\begin{equation}\label{metric}
			h=dt^2+g_{ij}(t,x)dx_idx_j = dt^2 + g_t,
		\end{equation}
		where $g_t$ is the metric on $\Sigma_t$, $g_0=g$. Then the Laplace operator on $\Omega$ can be written as
		\begin{equation}\label{lap}
			\Delta = \partial_t^2+a(t,x)\partial_t+\Delta_{g_t}
		\end{equation}
		where
		\begin{equation}\label{aformula}
			a(t,x) = \frac{1}{2}\partial_t\det(g_{ij}(t,x))/\det(g_{ij}(t,x))
		\end{equation}
		and $\Delta_{g_t}$ is the Laplace operator on $\Sigma_t$.
		
		For each $t\in[0,\delta)$ and $x\in\Sigma_t$, we denote the Weingarten map with respect to the inward normal  unit vectors of $\Sigma_t$ as
		\begin{equation}
			\mathcal{W}_{t,x}: T_x\Sigma_t \rightarrow T_x\Sigma_t,
		\end{equation}
		and  $\mathcal{W}_{t,x}^*: T^*_x\Sigma_t \rightarrow T^*_x\Sigma_t$  is its dual. Let $k_1(x)\le ... \le k_n(x)$ be the principal curvatures at $x$, and 
		\begin{equation}
			\tilde{k}_t(x) = - \text{Tr} \mathcal{W}_{t,x}+\sup_{\xi\neq0}\frac{\langle \mathcal{W}^*_{t,x}\xi,\xi\rangle_{t}}{\langle \xi,\xi\rangle_{t} } = -\sum_{j=1}^{n-1} k_j(x),
		\end{equation}
		where the inner product $\langle \cdot,\cdot\rangle_{t}$ on $T_x^*\Sigma_t$  is defined as
		\[\langle \xi,\eta\rangle_{t} = \sum_{i,j}g^{ij}(t,x)\xi_i\eta_j.\] Let $|\xi|_{g_t(x)}=\sqrt{\langle \xi,\xi\rangle_{t}}$. Then we define $\Theta:[0,\delta)\rightarrow \mathbb{R}$ as
		\begin{equation}
			\Theta(t) = \sup_{x\in \Sigma_t}\tilde{k}_t(x) + \sup_{x\in \Sigma_t}\text{Tr} \mathcal{W}_{t,x},
		\end{equation}
		and let
		\begin{equation}\label{Kt}
			K(t) = \int_{0}^{t} e^{\int_{0}^{s}\Theta(\tau)d\tau}ds.
		\end{equation}
		By Taylor's expansion, we have
		\[K(t) = t + \frac{\Theta(0)}{2}t^2 + O(t^3).\]
		Let 
		\begin{equation}\label{rg0}r(t)=\inf_{x\in M,\ \xi\ne0}\frac{|\xi|_{g_t(x)}}{|\xi|_{g(x)}},\ \quad G(t)=\int_0^tr(s)ds.\end{equation}
		We have $r(t)=1+O(t)$ and $G(t)=t+O(t^2)$. We will see that $G(t)=K(t)$ when $\Omega$ is a Euclidean ball  or the warped product manifold in Section 3.

		\subsection{Lower bounds}We first prove the lower bounds for the $L^2$ restrictions of harmonic functions.
		\begin{theorem}\label{thm1}
			Fix any $0<\delta_0 < \delta$. There is a constant $C>0$ dependent on $(\Omega,h)$ and $\delta_0$, such that for any $0 \le t \le \delta_0$, and any $f\in L^2(M)$, we have
			\begin{equation}\label{lowestimate}
				\|\mathcal{H}f\|_{L^2(\Sigma_t)} \ge Ce^{-\Lambda K(t)}\|f\|_{L^2(M)},
			\end{equation}
			where the frequency  $\Lambda = {\int_\Omega |\nabla u|^2}/{\int_{M}u^2}$ with $u=\mathcal{H}f$, and $K(t)=t+O(t^2)$ is defined in \eqref{Kt}. 
		\end{theorem}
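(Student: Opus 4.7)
\textbf{Proof plan for Theorem~\ref{thm1}.}

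My plan is to derive a first-order ODE for $H(t):=\|\mathcal H f\|_{L^2(\Sigma_t)}^2$ from the harmonicity of $u=\mathcal H f$, control the right-hand side by an Almgren--Garofalo--Lin-type frequency monotonicity, and integrate via Gronwall. Setting $A(t):=\int_{\Sigma_t} u\,\partial_t u\,d\sigma_t$ and using the collar decomposition \eqref{lap}--\eqref{aformula}, together with the identity $a(t,x)=-\mathrm{Tr}\,\mathcal W_{t,x}$, a direct computation yields
\[
H'(t)=2A(t)-\int_{\Sigma_t}\mathrm{Tr}\,\mathcal W_{t,x}\,u^2\,d\sigma_t,\qquad A'(t)=\int_{\Sigma_t}|\nabla u|^2\,d\sigma_t,
\]
while Green's identity on $\Omega$ combined with $\Delta u=0$ gives the initial value $A(0)=-\Lambda H(0)$, and more generally $-A(t)=\int_{\Omega_t}|\nabla u|^2\ge 0$.

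The heart of the argument is to prove a frequency bound of the form $\psi(t)\le\Lambda K'(t)+C_0$ on $[0,\delta_0]$, where $\psi(t):=-A(t)/H(t)$ and $C_0=C_0(\Omega,h,\delta_0)$. Differentiating $\psi$, substituting the expressions above, and applying the Cauchy--Schwarz inequality $A(t)^2\le H(t)\int_{\Sigma_t}(\partial_t u)^2=H(t)(A'(t)-E(t))$ with $E(t):=\int_{\Sigma_t}|\nabla_{g_t}u|^2\,d\sigma_t$, I arrive at the Riccati-type inequality
\[
\psi'(t)\le \psi(t)^2+\bar\beta(t)\,\psi(t)-E(t)/H(t),\qquad \bar\beta(t):=\int_{\Sigma_t}\mathrm{Tr}\,\mathcal W_{t,x}\,u^2\,d\sigma_t\big/H(t).
\]
On the Euclidean ball (and on the warped products of Section~3), direct computation with a single Steklov eigenfunction gives the sharp identity $\psi^2+(\bar\beta-\Theta)\psi=E/H$, so the Riccati bound collapses to $\psi'\le\Theta\psi=(K''/K')\psi$ and integration yields $\psi(t)/K'(t)\le\psi(0)=\Lambda$ with equality. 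In the general curved collar the same algebraic step produces an inequality with a remainder term, and the specific combination $\Theta(t)=\sup_x\tilde k_t(x)+\sup_x\mathrm{Tr}\,\mathcal W_{t,x}$ in \eqref{Kt} is precisely what is needed to absorb, respectively, the leading principal-curvature contribution from the cross terms between $\partial_t u$ and $\nabla_{g_t} u$ and the area-element contribution from $a(t,x)$.

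Once the frequency bound is established, substitution yields $H'(t)\ge-(2\Lambda K'(t)+C_1)H(t)$ for a constant $C_1=C_1(\Omega,h,\delta_0)$; Gronwall's inequality then gives $H(t)\ge e^{-2\Lambda K(t)-C_1\delta_0}H(0)$, and taking square roots produces \eqref{lowestimate} with $C=e^{-C_1\delta_0/2}$. The main obstacle is the frequency monotonicity step: in general collars one cannot expect an exact Almgren identity, and the sharpness required---saturation by Steklov eigenfunctions on balls and warped products---forces the weight $K(t)$ and the curvature quantity $\Theta(t)$ to be defined exactly as in \eqref{Kt}. Carefully isolating the remainder so that it contributes only to the multiplicative constant $C$, rather than to the exponential rate $\Lambda K(t)$, is the principal technicality.
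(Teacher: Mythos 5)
Your setup is correct and aligns with the paper's strategy: the quantities $H(t)$, $A(t)=-D(t)$, and the frequency $\psi(t)=N(t)=D(t)/H(t)$ are the same, the formula $H'(t)=2A(t)-\int_{\Sigma_t}\mathrm{Tr}\,\mathcal W_{t,x}\,u^2$ matches \eqref{hder}, and the Riccati-type inequality
\[
\psi'(t)\le \psi(t)^2+\bar\beta(t)\psi(t)-E(t)/H(t),\qquad E(t)=\int_{\Sigma_t}|\nabla_{g_t}u|^2,
\]
is a correct consequence of Cauchy--Schwarz. The identification $\psi(0)=\Lambda$ via Green's identity and the final Gronwall step are also fine.

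However, the heart of the proof---showing $\psi^2-E/H\le(\Theta-\bar\beta)\psi+O(1)$, equivalently $N'(t)\le\Theta(t)N(t)+O(1)$---is not actually carried out. You assert that "the specific combination $\Theta(t)$ is precisely what is needed to absorb the leading principal-curvature contribution from the cross terms," but in the general smooth setting there is no elementary algebraic route from $E/H=\int|\nabla_{g_t}u|^2/H$ to an expression linear in $\psi$. The paper's mechanism is microlocal: rather than integrating $u\Delta_{g_t}u$ by parts to $-E$, it writes $-\Delta_{g_t}=(\sqrt{-\Delta_{g_t}})^2=(\mathcal N_t+\mathcal B_t)^2$ using the DtN decomposition of Lemma~\ref{symblem}, and then uses that $u$ is harmonic so $\mathcal N_t u=\partial_t u$ on $\Sigma_t$. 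This turns $E/H$ into $\int|\partial_t u|^2/H$ plus a cross term $2\int u\,\mathcal N_t\mathcal B_t u/H$ plus an $O(1)$ term; the first part cancels the $\psi^2$ via Cauchy--Schwarz, and the cross term (which is order one and genuinely contributes to the rate) is estimated by the sharp G\aa rding inequality applied to $(\sup\tilde k_t)\mathcal N_t+2\mathcal N_t\mathcal B_t$, whose principal symbol is nonnegative precisely because of how $\tilde k_t$ and the principal symbol of $\mathcal B_t$ are defined in terms of the Weingarten map. None of this---the decomposition $\sqrt{-\Delta_{g_t}}=\mathcal N_t+\mathcal B_t$, the harmonicity identity $\mathcal N_t u=\partial_t u$, the self-adjointness of $\mathcal N_t$, or the sharp G\aa rding step---appears in your proposal, and without it the remainder you want to absorb into $C$ is not actually shown to be $O(1)$; on a general collar it a priori scales with the frequency. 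The verification on balls and warped products is useful for confirming the sharpness of $K(t)$ but does not substitute for this step, since Theorem~\ref{thm1} must hold for all $f\in L^2(M)$, not just Steklov eigenfunctions of a model geometry. This is a genuine gap at the critical point of the argument.
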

		 The lower bounds are sharp in the sense that they can be saturated by Steklov eigenfunctions on Euclidean balls and a family of symmetric warped product manifolds, see Section 3.  When $f\in L^2(M)\setminus H^{\frac{1}{2}}(M)$, we have $\Lambda=\infty$ and \eqref{lowestimate} is trivial   in this case. Moreover, a simple consequence is  the lower bound for the $L^p$ norm with $p>2$ 
		\begin{equation}\label{lowlp}
			\|\mathcal{H}f\|_{L^p(\Sigma_t)} \ge Ce^{-\Lambda K(t)}\|f\|_{L^2(M)}.
		\end{equation}
		It is also sharp, since it can be saturated by Example \ref{exTorus} in Section 3.

		As an application, using $\int_\Omega |\nabla u|^2 = \int_M u\partial_\nu u$, we get the following exponential lower bound for the Steklov eigenfunctions  on smooth manifolds. As before,  let $e_\la$ be a DtN eigenfunction and $u_\la=\mathcal{H}e_\la$ be the Steklov eigenfunction.
		
		\begin{corollary}\label{lowbound}
			Fix any $0<\delta_0 < \delta$. There is a constant $C>0$ dependent on $(\Omega,h)$ and $\delta_0$, such that for any $0 \le t \le \delta_0$, we have
			\begin{equation}\label{lowestimate1}
				\|u_\la\|_{L^2(\Sigma_t)} \ge Ce^{-\la K(t)}\|e_\la\|_{L^2(M)}.
			\end{equation}
		\end{corollary}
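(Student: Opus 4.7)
The plan is to obtain Corollary \ref{lowbound} as a direct specialization of Theorem \ref{thm1} to the case $f = e_\la$, with the key point being to identify the Almgren frequency $\Lambda$ with the Steklov eigenvalue $\la$.

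First, I would apply Theorem \ref{thm1} with boundary datum $f = e_\la$. Since $e_\la \in L^2(M)$ (in fact smoother, being a DtN eigenfunction), the theorem yields
\begin{equation*}
\|u_\la\|_{L^2(\Sigma_t)} \ge C e^{-\Lambda K(t)} \|e_\la\|_{L^2(M)},
\end{equation*}
where $\Lambda = \int_\Omega |\nabla u_\la|^2 / \int_M e_\la^2$ and $u_\la = \mathcal{H} e_\la$.

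Next, I would identify $\Lambda$ using the hint provided in the paper. By the divergence theorem (Green's first identity) applied to the harmonic function $u_\la$,
\begin{equation*}
\int_\Omega |\nabla u_\la|^2 \, dV_h = \int_M u_\la \, \partial_\nu u_\la \, dV_g,
\end{equation*}
where $\partial_\nu$ denotes the outward normal derivative on $M = \partial\Omega$. Since $e_\la$ is a DtN eigenfunction with eigenvalue $\la$, by definition $\mathcal{N} e_\la = \partial_\nu u_\la = \la e_\la$ on $M$, and the boundary trace of $u_\la$ equals $e_\la$. Therefore
\begin{equation*}
\int_\Omega |\nabla u_\la|^2 = \int_M e_\la \cdot \la e_\la = \la \int_M e_\la^2,
\end{equation*}
which gives exactly $\Lambda = \la$.

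Substituting this into the lower bound from Theorem \ref{thm1} produces \eqref{lowestimate1}. There is no real obstacle here since the theorem already does the heavy lifting; the only content of the corollary is the computation $\Lambda = \la$, which is standard once one invokes the defining property of the DtN operator. The constant $C$ and its dependence on $(\Omega,h)$ and $\delta_0$ carries over unchanged from Theorem \ref{thm1}.
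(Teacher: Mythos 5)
Your proof is correct and matches the paper's approach exactly: the paper states the corollary follows from Theorem \ref{thm1} by using $\int_\Omega|\nabla u|^2 = \int_M u\,\partial_\nu u$, which together with $\mathcal{N}e_\la = \la e_\la$ gives $\Lambda = \la$, precisely your computation.
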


		For real analytic $\Omega$ and $M$, Galkowski-Toth \cite{GT2021} have proved some lower bounds of this type. Indeed, it follows from  \cite[Theorem 1]{GT2021} that for any $\eps>0$, $\la>\la_0(\eps)$ and $t\le \eps_0$, there is $c>0$ such that
\begin{equation}\label{gtlow}
	\|u_\la\|_{L^2(\Sigma_t)}\gs e^{-\eps \la}e^{-(t+ct^2)\la}\|e_\la\|_{L^2(M)}.
\end{equation}
		In comparison, our estimate \eqref{lowestimate1} is sharp and has no $\epsilon$-loss. Moreover, we do not need the analyticity assumptions.

	%	there exist $\eps_0, C_1>0$, for any $\eps>0$, there exist $C,\la_0>0$, such that for any $\la>\la_0$ and $0<t<\eps_0$,
		%	\[\|u_\la\|_{L^2(\Sigma_t)}+\la^{-1}\|\nabla u_\la\|_{L^2(\Sigma_t)} \ge Ce^{-\eps\la}e^{-(t+C_1 t^2)\la}\|e_\la\|_{L^2(M)}.\]

	\subsection{Upper bounds}From the examples of Steklov eigenfunctions (see Section 3), we naturally expect that any harmonic function  should rapidly decay  with respect to $\la t$, where  $t$ is the distance to the boundary and $\la$ is its ``lowest spectral frequency''. We shall give a rigorous justification for this observation.
	
Let $P\in OPS^1(M)$ be classical and self-adjoint with principal symbol $p(x,\xi)=|\xi|_{g(x)}$.  	Let $\1_{\ge \la}$ be the indicator function of $[\la,\infty)$. 
	We shall prove the following interior decay property when the spectral frequency of the boundary datum $f$ is bounded below by $\la$.
		
		\begin{theorem}\label{thm2}
			Let $0 \le \delta_0 < \delta$, $0<c<1$, $N>0$ and $\la>0$. Suppose that	$f=\1_{\ge \la}(P)f$. Then for $1<p\le\infty$ and $0\le t\le \delta_0$, we have
			\begin{equation}\label{smoothup2}
				\|\mathcal{H}f\|_{L^p(\Sigma_t)} \le C_1 e^{-c\la G(t)}\|f\|_{L^p(M)} + C_2\la^{-N}\|f\|_{L^1(M)},
			\end{equation}
			where $C_1 =C_1(\delta_0,c,p),C_2=C_2(\delta_0,c,N)$, and $G(t)=t+O(t^2)$ is defined in \eqref{rg0}.
		\end{theorem}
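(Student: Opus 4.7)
Following Sogge \cite{SoggeFIO} and Taylor \cite{TaylorPDE2}, the plan is to construct a parametrix for $\mathcal{H}$ in the tubular neighborhood $\Omega_{\delta_0}^{\mathsf{c}}$ and then compute its composition with a smooth spectral cutoff of $P$. Working in boundary normal coordinates $(t,x)$ with a finite partition of unity subordinate to $\{U_\nu\}$, one seeks an approximate operator of the form
\[
\tilde{\mathcal{H}}f(t,x) = (2\pi)^{-n}\int_{\mathbb{R}^n} e^{i\Phi(t,x,\xi)}\, a(t,x,\xi)\, \hat{f}(\xi)\, d\xi,
\]
with $\Phi|_{t=0}=\langle x,\xi\rangle$, $a|_{t=0}=1$, and $a\in S^0$. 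Substituting into $\Delta=\partial_t^2+a(t,x)\partial_t+\Delta_{g_t}$ produces the complex eikonal equation $(\partial_t\Phi)^2+|\nabla_x\Phi|^2_{g_t(x)}=0$ together with transport equations for $a$. Choosing the decaying branch $\partial_t\Phi=i|\nabla_x\Phi|_{g_t(x)}$ and solving iteratively in $t$ yields, for any prescribed $c'<1$, the central estimate
\[
\Im\Phi(t,x,\xi)\ge c'\, G(t)\,|\xi|_{g(x)},\quad 0\le t\le\delta_0,
\]
with $G(t)=\int_0^t r(s)\,ds$ as in \eqref{rg0}. Solving the transport equations for $a$ order by order and Borel-summing then makes $\mathcal{H}-\tilde{\mathcal{H}}$ smoothing on $\Omega_{\delta_0}^{\mathsf{c}}$.

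\textbf{Composition with the spectral cutoff.} Given $c<1$, pick $c'\in(c,1)$ and $\tilde\chi\in C^\infty(\mathbb{R})$ supported in $[c/c',\infty)$ with $\tilde\chi\equiv 1$ on $[1,\infty)$. Since $c/c'<1$, the hypothesis $f=\1_{\ge\la}(P)f$ implies $f=\tilde\chi(P/\la)f$. By the functional calculus for the first-order elliptic self-adjoint operator $P\in OPS^1(M)$, $\tilde\chi(P/\la)\in OPS^0(M)$ has principal symbol supported in $\{|\xi|_{g(x)}\ge (c/c')\la\}$, with symbol bounds uniform in $\la$ and smoothing remainders whose $L^1(M)\to L^\infty(M)$ operator norm is $O(\la^{-N})$. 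The composition $\tilde{\mathcal{H}}(t)\tilde\chi(P/\la)$ is then a Fourier integral operator with complex phase whose amplitude is supported in $|\xi|_g\ge(c/c')\la$, on which $|e^{i\Phi}|\le e^{-c' G(t)|\xi|_g}\le e^{-c\la G(t)}\cdot e^{-c'G(t)(|\xi|_g-(c/c')\la)}$, so that the surplus factor in $|\xi|_g$ is available beyond the target prefactor $e^{-c\la G(t)}$.

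\textbf{$L^p$ bound and remainder.} The operator $\tilde{\mathcal{H}}(t)\tilde\chi(P/\la)$ is bounded on $L^p(M)$ with norm $\le C_1 e^{-c\la G(t)}$ for $1<p\le\infty$: at $t=0$ one uses the standard $L^p$-boundedness of zero-order pseudodifferential operators, while for $t>0$ the surplus factor above provides enough decay in $|\xi|_g$ to bound the kernel pointwise by non-stationary phase, using the non-degeneracy $\nabla_\xi\Re\Phi|_{t=0}=x$ and smallness of $t\le\delta_0$; Young's inequality then gives the first term $C_1 e^{-c\la G(t)}\|f\|_{L^p(M)}$ in \eqref{smoothup2}. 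The combined smoothing remainder from $(\mathcal{H}-\tilde{\mathcal{H}})(t)$ and from the smoothing residue of $\tilde\chi(P/\la)$ acts, when composed with $\1_{\ge\la}(P)$, as an $L^1(M)\to L^\infty(M)$ operator of norm $O(\la^{-N})$ by standard Sobolev-type bounds on eigenfunctions of $P$, giving the second term $C_2\la^{-N}\|f\|_{L^1(M)}$.

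\textbf{Main obstacle.} The most delicate step is the eikonal estimate $\Im\Phi\ge c' G(t)|\xi|_g$ with $c'<1$ arbitrarily close to $1$. The leading-order contribution follows from $|\nabla_x\Phi|_{g_t(x)}\ge r(t)|\nabla_x\Phi|_{g(x)}$ once one has shown that $\Re\Phi$ remains close to $\langle x,\xi\rangle$ for $t\le\delta_0$, but tracking how higher-order corrections from the subprincipal symbol $a(t,x)\partial_t$ and from the amplitude transport equations propagate through the iteration, and showing they contribute only $O(t^2)|\xi|$ terms absorbable into the $c'$-loss, is the main technical difficulty. A secondary subtlety is that the uniform $L^p$-control of $\tilde{\mathcal{H}}(t)\tilde\chi(P/\la)$ across $t\in[0,\delta_0]$ must interpolate between the zero-order $\Psi$DO estimate at $t=0$ (valid for $1<p<\infty$ by Calder\'on-Zygmund theory) and the smoothing-type estimate for $t>0$, with the endpoint $p=\infty$ handled separately via the decaying kernel bound.
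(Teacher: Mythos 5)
Your overall strategy is the one the paper uses: build a parametrix for $\mathcal{H}$ near the boundary whose symbol carries the exponential factor $\sim e^{-G(t)|\xi|_g}$, feed in $f=\1_{\ge\la}(P)f$ to force $|\xi|_g\gs\la$ on the non-smoothing part, and absorb the smoothing errors into the $O(\la^{-N})$ term. The organizational difference is minor: you put the decay into a complex phase $\Phi$ satisfying an eikonal equation, whereas the paper (Lemma \ref{parametrix}, following Taylor) keeps the linear phase $x\cdot\xi$ and places the decay into an amplitude $b(t,x,\xi)=e^{-\int_0^t a(s,x,\xi)ds}\cdot(\text{asymptotic series})\in\mathcal{P}^0_e$, which is cleaner since it avoids complex-phase FIO calculus and makes the factorization $b=e^{-(1-3\eps)\la G(t)}\tilde b$ on $\{|\xi|_g\gs\la\}$ immediate. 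Your ``main obstacle'' discussion of the eikonal/transport hierarchy corresponds to the paper's claim \eqref{bjeq}, which is proved by induction.

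However, there is a genuine gap at the endpoint $p=\infty$ for small $t$. You propose to close the $p=\infty$ case by a decaying kernel bound plus Young's inequality, relying on the ``surplus'' exponential factor $e^{-c'G(t)(|\xi|_g-(c/c')\la)}$ for integrability. But this surplus only yields a kernel that is integrable uniformly in $t$ once $t\gs\la^{-1}$. For $0\le t\ls\la^{-1}$ one has $e^{-c\la G(t)}\approx 1$, and the claimed estimate requires $\|\tilde{\mathcal{H}}(t)\tilde\chi(P/\la)\|_{L^\infty\to L^\infty}\ls 1$ uniformly down to $t=0$; at $t=0$ the operator degenerates to a zeroth-order $\Psi$DO (times a high-pass cutoff), whose kernel is $O(|x-y|^{-n})$ near the diagonal and is not in $L^1$, so Young's inequality fails. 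Indeed, the quantitative version of your kernel bound gives $\|\cdot\|_{L^\infty\to L^\infty}\ls e^{-c\la G(t)}(1+(\la t)^{-n})$, which blows up as $t\to0$. The paper sidesteps this precisely by invoking the maximum principle: for $t\le\la^{-1}$ one has $\|\mathcal{H}f\|_{L^\infty(\Sigma_t)}\le\|f\|_{L^\infty(M)}$ outright, and since $e^{-c\la G(t)}\approx 1$ there this is already the desired bound. You should add this step; without it the $p=\infty$ endpoint (and hence the stated range $1<p\le\infty$) is not established on the region $t\ls\la^{-1}$.

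Two smaller remarks. First, for $1<p<\infty$ the paper gets the uniform $L^p$ bound not from a kernel estimate but directly from $\tilde b(t,x,\xi)\eta_1(p(x,\xi)/\la)\in S^0$ with symbol estimates uniform in $t,\la$, and then $\Psi$DO $L^p$-boundedness; your Calder\'on--Zygmund appeal is the same idea and is fine. Second, your treatment of the smoothing remainder (testing against the eigenbasis with H\"ormander's $L^\infty$ bound) matches the paper's, so that part is in order.
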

		The exponential term is essentially sharp, in the sense that $G(t)=K(t)$ when $\Omega$ is a Euclidean ball  or the warped product manifold in Section 3. The remainder term $O(\la^{-N})$ comes from the smoothing operators that are natural in the theory of pseudo-differential operators on smooth manifolds. The crux of the proof is the parametrix for the Poisson integral operator in Lemma \ref{parametrix}. From the proof, we shall notice that \eqref{smoothup2} is valid for all $1\le p\le \infty$ if the boundary datum $f$ has spectral frequency $\approx \la$.  Moreover, the proof also implies the derivative estimates. Indeed, if $f=\1_{\ge \la}(P)f$ then we have for $1<p<\infty$
		\begin{equation}\label{derup}
				\|\partial_{t,x}^\alpha\mathcal{H}f\|_{L^p(\Sigma_t)} \ls\la^{|\alpha|}e^{-c\la G(t)}\|f\|_{L^p(M)} + \la^{-N}\|f\|_{L^1(M)},\ \forall \alpha,\forall N.
		\end{equation}
		\begin{remark}
			In comparison to Theorem \ref{thm1}, we  remark that the upper bound of the form with $C,c>0$
			\begin{equation}\label{uppestimate}
				\|\mathcal{H}f\|_{L^2(\Sigma_t)} \le Ce^{-c\Lambda t}\|f\|_{L^2(M)}
			\end{equation}
			cannot hold for general boundary datum $f$, where the frequency  $\Lambda = {\int_\Omega |\nabla u|^2}/{\int_{M}u^2}$ with $u=\mathcal{H}f$. For example, let $f=1+e_\la$ and $u=\mathcal{H}f=1+u_\la$, where $e_\la$ is an $L^2$ normalized DtN eigenfunction. Then both  $u$ and $u_\la$ have frequencies $\Lambda \approx \la$. However, when $\la t\gg 1$, we notice that $\|u\|_{L^2(\Sigma_t)}\approx 1$ shows no decay while $\|u_\la\|_{L^2(\Sigma_t)}\ls (1+\la t)^{-N}$ decays rapidly. This key observation suggests that the interior decay property of harmonic functions should only rely on the ``lowest spectral frequency'', as described in Theorem \ref{thm2}.
		\end{remark}
		Recall Sogge's seminal work \cite{sogge88} on $L^p$ estimates. If $\1_{(\la-1,\la]}$ is the indicator function of  $(\la-1,\la]$, then
		\begin{equation}\label{soggelp}	\|\1_{(\la-1,\la]}(P)f\|_{L^p(M)}\ls \la^{\sigma(p) }\|f\|_{L^2(M)},\ \ 2\le p\le\infty,\end{equation}
		where
		\begin{equation}\label{sigmap}
			\sigma(p)=\begin{cases}\frac{n-1}2(\frac12-\frac1p),\ \ \  2\le p<\frac{2(n+1)}{n-1}\\
				\frac{n-1}2-\frac np,\ \ \ \ \ \  \frac{2(n+1)}{n-1}\le p\le \infty.\end{cases}
		\end{equation}
		See \cite[Theorem 5.1.1]{SoggeFIO}. The $L^\infty$ bound is due to H\"ormander \cite{hor68}. The estimates are sharp and they can be saturated by some spherical harmonics on the standard sphere. So we have the following  interior decay estimates for Steklov eigenfunctions on smooth manifolds with smooth boundary. 
		\begin{corollary}\label{smoothdecay}
			Let $0 \le \delta_0 < \delta$, $0<c<1$, $N>0$ and $\la>0$. Then for $2\le p \le \infty$ and $t\in [0,\delta_0]$ we have
			\begin{equation}\label{smoothup3}
				\|u_\la\|_{L^p(\Sigma_t)} \ls (\la^{\sigma(p)}e^{-c\la G(t)} + \la^{-N})\|e_\la\|_{L^2(M)},\ \forall N.
			\end{equation}
			This implies
			\begin{equation}\label{smoothup4}
				\|u_\la\|_{L^p(\Sigma_t)} \ls \la^{\sigma(p)}(1+\la t)^{-N}\|e_\la\|_{L^2(M)},\ \forall N.
			\end{equation}
			Here $\sigma(p)$ is Sogge's exponent in \eqref{soggelp}.
		\end{corollary}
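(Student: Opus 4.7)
The plan is to specialize Theorem \ref{thm2} to the boundary datum $f=e_\la$ and combine it with Sogge's spectral cluster estimate \eqref{soggelp}. I would take $P=\mathcal{N}$, the Dirichlet-to-Neumann operator, which is one of the admissible $P\in OPS^1(M)$ with principal symbol $|\xi|_{g(x)}$. Because $e_\la$ is a DtN eigenfunction with eigenvalue exactly $\la$, we have $\1_{\ge \la}(\mathcal{N})e_\la=e_\la$, so the hypothesis of Theorem \ref{thm2} is satisfied. Applying that theorem at the given exponent $p\in[2,\infty]\subset(1,\infty]$ and arbitrary $N>0$ gives
\[
\|u_\la\|_{L^p(\Sigma_t)}\ls e^{-c\la G(t)}\|e_\la\|_{L^p(M)}+\la^{-N}\|e_\la\|_{L^1(M)}
\]
for all $t\in[0,\delta_0]$.

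Next I would insert two standard bounds on $e_\la$. Since $e_\la=\1_{(\la-1,\la]}(\mathcal{N})e_\la$, Sogge's estimate \eqref{soggelp} yields $\|e_\la\|_{L^p(M)}\ls \la^{\sigma(p)}\|e_\la\|_{L^2(M)}$ for $2\le p\le\infty$, while Cauchy--Schwarz on the compact boundary $M$ gives $\|e_\la\|_{L^1(M)}\ls \|e_\la\|_{L^2(M)}$. Substituting both bounds into the preceding display and using the arbitrariness of $N$ produces exactly \eqref{smoothup3}.

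To deduce \eqref{smoothup4} from \eqref{smoothup3} I would absorb both summands on the right of \eqref{smoothup3} into $\la^{\sigma(p)}(1+\la t)^{-N}$. Since $G(t)=t+O(t^2)$ and $r$ is continuous and positive on $[0,\delta_0]$, there is $c_0>0$ with $G(t)\ge c_0 t$ on that interval, so $e^{-c\la G(t)}\le e^{-cc_0\la t}\ls_N (1+\la t)^{-N}$ for any $N$, controlling the exponential term. For the $\la^{-N_0}$ remainder I would choose $N_0=N+\sigma(p)$ in \eqref{smoothup3}; since $1+\la t\le 1+\la\delta_0\ls \la$ for $\la\ge 1$, we have $(1+\la t)^N\ls \la^N$, so $\la^{-N-\sigma(p)}\ls \la^{\sigma(p)}(1+\la t)^{-N}$, and the bounded range $\la\ls 1$ is trivial. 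There is no real obstacle here: the corollary is essentially a direct consequence of Theorem \ref{thm2} together with the standard $L^p$ cluster bound, the only care needed being the exponent bookkeeping in rewriting the two-term estimate \eqref{smoothup3} in the single-term form \eqref{smoothup4}.
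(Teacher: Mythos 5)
Your proof is correct and matches the paper's intended derivation: the paper states Sogge's bound \eqref{soggelp} and Theorem \ref{thm2} immediately before Corollary \ref{smoothdecay} precisely because the corollary is obtained by inserting $f=e_\la=\1_{\ge\la}(\mathcal{N})e_\la$ into Theorem \ref{thm2}, bounding $\|e_\la\|_{L^p(M)}\ls\la^{\sigma(p)}\|e_\la\|_{L^2(M)}$ via \eqref{soggelp} and $\|e_\la\|_{L^1(M)}\ls\|e_\la\|_{L^2(M)}$ via Cauchy--Schwarz. Your conversion of \eqref{smoothup3} to \eqref{smoothup4}, using $G(t)\gs t$ on $[0,\delta_0]$ to dominate the exponential and choosing the remainder exponent large enough for $\la\ge1$, is the correct bookkeeping; for $\la>0$ bounded, note that the DtN spectrum is discrete so $\la\ge\la_1>0$, and both sides are comparable on any bounded $\la$-range.
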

		Our estimates \eqref{smoothup3} and \eqref{smoothup4} are at least  sharp for small $t$, by the sharpness of Sogge's estimates \eqref{soggelp}. We also have the derivative estimates for $2\le p \le \infty$
		\begin{equation}\label{smoothup5}
		\|\partial_{t,x}^\alpha u_\la\|_{L^p(\Sigma_t)} \ls \la^{\sigma(p)+|\alpha|}(1+\la t)^{-N}\|e_\la\|_{L^2(M)},\ \forall \alpha,\forall N.
		\end{equation}
		
		The interior decay properties of Steklov eigenfunctions have been studied by Hislop-Lutzer \cite{HL2001}, Polterovich-Sher-Toth \cite{PST19},  Galkowski-Toth \cite{GT2019},  Di Cristo-Rondi \cite{DR2019},  Daudé-Helffer-Nicoleau \cite{DHN21}, Helffer-Kachmar \cite{HK22}, Levitin-Parnovski-Polterovich-Sher \cite{LPPS22}. Now we go over the previous results and compare them with ours.

	In 2001, Hislop-Lutzer  \cite{HL2001} proved when the boundary is smooth, Steklov eigenfunctions decay super-polynomially into the interior, namely \begin{equation}\label{hlup}
			\|u_\la\|_{H^1(K)}=O(\la^{-N}),\ \forall N,
		\end{equation} on any compact subset $K$ of the interior of  $\Omega$, and they also conjectured that the decay is actually of order $e^{-\la t}$ if $\Omega$ and $M$ are real analytic. Under the analyticity assumptions, recently Polterovich-Sher-Toth \cite{PST19} ($\dim\Omega=2$) and Galkowski-Toth \cite{GT2019} ($\dim\Omega\ge2$) established pointwise bounds with exponential decay. Specifically, Galkowski-Toth \cite[Theorem 1]{GT2019} obtained
		\begin{equation}\label{gtup}
			\|u_\la\|_{L^\infty(\Sigma_t)} \ls \la^{\sigma(\infty)+\frac{1}{4}}e^{-\la (t-ct^2)}\|u_\la\|_{L^2(M)}.
		\end{equation}
		The proof  are based on analytic microlocal analysis and thus
		use the analyticity hypotheses in extremely strong ways. So one would perhaps believe that the exponential decay property
		cannot hold in the smooth case.
		For smooth manifolds, Helffer-Kachmar \cite[Theorem 1]{HK22} proved 
		\begin{equation}\label{hkup}
			\|u_\la\|_{L^\infty(\Sigma_t)} \ls \la^{n-1}(1+\la t)^{-N}\|e_\la\|_{L^2(M)},\ \forall N.
		\end{equation}
		Di Cristo-Rondi \cite{DR2019} considered the problems of interior decay for very general elliptic equations. The following estimates can be derived from \cite[Theorem 3.3]{DR2019}
		\begin{equation}\label{drup}
			\|u_\la\|_{L^2(\Sigma_t)} \ls (1+\la t)^{-\frac12} \|e_\la\|_{L^2(M)}.
	\end{equation}
		In comparison, our results \eqref{smoothup3} and \eqref{smoothup4} are better than \eqref{gtup} when $t\ls \la^{-1}\log\la$,  and we do not need the analyticity hypotheses. Moreover, our results are stronger than \eqref{hkup} and \eqref{drup}. By the maximum principle, our results imply \eqref{hlup}.

		%		\begin{equation}
			%			\|u\|_{L^2(\Sigma_t)} \ls e^{c_1 t} h(c_2 t\Phi(u)) \|u\|_{L^2(M)}.
			%		\end{equation}
		%		here $\Phi(u)$ is the lower frequency and defined as 
		%		$$\Phi(u)=\frac{\|u\|_{L^2(M)}}{\|u\|_{H^{-\frac{1}{2}}(M)}}$$
		%		and $h(s)=\min\{e^{-s},(es)^{-1}\}$.

		\subsection{Comparable interior and boundary norms}
		By Theorem \ref{thm2}, if the spectral frequency is bounded below by $\la$, i.e. $f=\1_{\ge \la}(P)f$, then we can integrate in $t$ to get the following interior norm estimates for $1<p\le\infty$
		\begin{equation}\label{intLpup}
			\|\mathcal{H}f\|_{L^p(\Omega)}\ls \la^{-\frac1p}\|f\|_{L^p(M)}.
		\end{equation}
		Next, we further prove that if the boundary datum $f$ has spectral frequency $\approx \la$, then the interior norm of its harmonic extension $\mathcal{H}f$ is comparable to the norm of $f$ on the boundary up to a factor of the frequency. As before, 	let $P\in OPS^1(M)$ be classical and self-adjoint with principal symbol $p(x,\xi)=|\xi|_{g(x)}$. 
		
		\begin{theorem}\label{thm3}
			Let $\la>0$ and $\beta \in C_0^{\infty}(\mathbb{R}^+)$. Let $f_\la=\beta(P/\la)f$. Then for any $1\le p \le \infty$, we have
			\begin{equation}
				\|\mathcal{H}f_\la\|_{L^p(\Omega)} \approx \la^{-\frac1p}\|f_\la\|_{L^p(M)} ,
			\end{equation}
			$i.e.$ there is a constant $C=C(\Omega)>0$ such that
			\begin{equation}
				C^{-1}\la^{-\frac1p}\|f_\la\|_{L^p(M)} \le \|\mathcal{H}f_\la\|_{L^p(\Omega)} \le C\la^{-\frac1p}\|f_\la\|_{L^p(M)}.
			\end{equation}
		\end{theorem}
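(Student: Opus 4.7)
The plan is to prove the upper and lower bounds in $\|\mathcal{H}f_\la\|_{L^p(\Omega)}\approx\la^{-1/p}\|f_\la\|_{L^p(M)}$ separately, exploiting the tight spectral localization of $f_\la=\beta(P/\la)f$. Since $\mathrm{supp}\,\beta\subset(0,\infty)$ is compact, we have $f_\la=\1_{\ge a\la}(P)f_\la$ for some $a>0$, so $f_\la$ fits the hypothesis of Theorem \ref{thm2}. Moreover, by the remark following Theorem \ref{thm2}, estimate \eqref{smoothup2} holds for the full range $1\le p\le\infty$ on such spectrally-banded data.

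For the upper bound, I would integrate \eqref{smoothup2} in $t$. On the collar $\Omega_{\delta_0}^{\mathsf{c}}$ (for any fixed $\delta_0<\delta$), Fermi coordinates give a volume element comparable to $dt\,d\mu_g(x)$, so
\[
\|\mathcal{H}f_\la\|_{L^p(\Omega_{\delta_0}^{\mathsf{c}})}^p\lesssim\int_0^{\delta_0}e^{-pc\la G(t)}\,dt\cdot\|f_\la\|_{L^p(M)}^p+\la^{-pN}\|f_\la\|_{L^1(M)}^p.
\]
Since $G(t)\gtrsim t$ for small $t$, the first integral is $O(\la^{-1})$; since $M$ is compact, H\"older's inequality gives $\|f_\la\|_{L^1(M)}\lesssim\|f_\la\|_{L^p(M)}$, absorbing the $\la^{-N}$ remainder for $N$ large. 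The interior contribution $\|\mathcal{H}f_\la\|_{L^p(\Omega_{\delta_0})}$ is $O(\la^{-N})\|f_\la\|_{L^p(M)}$ by applying \eqref{smoothup2} at $t=\delta_0$ and invoking the maximum principle on the harmonic function $\mathcal{H}f_\la$ over $\Omega_{\delta_0}$.

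For the lower bound, the plan is to show that $\|\mathcal{H}f_\la\|_{L^p(\Sigma_t)}\gtrsim\|f_\la\|_{L^p(M)}$ uniformly for $0\le t\le c/\la$ with $c$ small, and then integrate:
\[
\|\mathcal{H}f_\la\|_{L^p(\Omega)}^p\ge\int_0^{c/\la}\|\mathcal{H}f_\la\|_{L^p(\Sigma_t)}^p\,dt\gtrsim\la^{-1}\|f_\la\|_{L^p(M)}^p.
\]
Since $\mathcal{H}f_\la|_{t=0}=f_\la$, the fundamental theorem of calculus reduces the pointwise-in-$t$ lower bound to a uniform derivative estimate
\[
\|\partial_s\mathcal{H}f_\la(s,\cdot)\|_{L^p(M)}\lesssim\la\,\|f_\la\|_{L^p(M)},\qquad 0\le s\le c/\la,
\]
for then $\|\mathcal{H}f_\la(t,\cdot)-f_\la\|_{L^p(M)}\le (c/\la)\cdot C\la\,\|f_\la\|_{L^p(M)}\le \tfrac12\|f_\la\|_{L^p(M)}$ once $c$ is small. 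To obtain this derivative bound I would invoke the parametrix of Lemma \ref{parametrix}, writing $\mathcal{H}f(s,\cdot)=B(s)f+R(s)f$ with $R$ smoothing and $B(s)$ essentially $e^{-sA}$ for an operator $A$ of order one with principal symbol $|\xi|_g$. Then modulo smoothing, $\partial_s B(s)f_\la=-\la\,m_s(P/\la)f_\la$ where $m_s(\tau)=\tau\tilde\beta(\tau)\cdot(\text{smooth and uniformly bounded for } s\la\le c)$ and $\tilde\beta\equiv1$ on $\mathrm{supp}\,\beta$; a uniform-in-$\la$ spectral multiplier estimate for $m_s(P/\la)$ closes the argument.

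The main obstacle is making the derivative bound uniform at the endpoints $p=1,\infty$. For $1<p<\infty$, a Mihlin-H\"ormander-type multiplier theorem for $P$ applies directly to $m_s(P/\la)$, yielding uniform $L^p$ bounds. At the endpoints one cannot rely on such Calder\'on-Zygmund theory and must instead exploit the FIO structure of $B(s)$ together with the compact support of $\tilde\beta$ to construct a Schwartz kernel for $m_s(P/\la)$ with uniformly $L^1$-integrable rows and columns, in the spirit of Sogge's stationary phase arguments for spectral clusters. Once these endpoint bounds are in place, the rest of the proof is routine integration and book-keeping.
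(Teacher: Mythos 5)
Your upper-bound argument matches the paper's exactly: integrate the decay estimate of Theorem \ref{thm2} over the collar via the co-area formula, and kill the interior contribution with the maximum principle plus the $\la^{-N}$ remainder. The lower-bound argument, however, is a genuinely different route. The paper's proof goes through Lemma \ref{lplowbounds}: it splits the Poisson parametrix into $b_0$ plus a lower-order piece, observes that $b_0$ is an elliptic symbol on the band $|\xi|\approx\la$ when $t\le\la^{-1}$, and constructs an explicit microlocal inverse $E_t^\la$ with uniform kernel bounds, yielding $\|E_t^\la P_t^\la f_\la-f_\la\|_{L^p}\ls\la^{-1}\|f_\la\|_{L^p}$. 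Your proposal instead treats $\mathcal{H}f_\la(t,\cdot)$ as a perturbation of $f_\la$ via the fundamental theorem of calculus and a derivative bound $\|\partial_s\mathcal{H}f_\la(s,\cdot)\|_{L^p}\ls\la\|f_\la\|_{L^p}$ on $s\le c/\la$, which is essentially the $|\alpha|=1$ case of \eqref{derup} (asserted after \eqref{Lpexp} to extend to $1\le p\le\infty$ for spectrally banded data). Conceptually this is cleaner --- continuity at the boundary instead of microlocal inversion --- but it requires the sharp factor $\la$ and not $\la^{1+\eps}$, and the technical burden at $p=1,\infty$ is the same (one must derive kernel bounds $\ls\la^{n+1}(1+\la|x-y|)^{-N}$ for the band-limited derivative operator); the microlocal-inverse route dispenses with any appeal to FTC but trades it for the algebraic identity $E_t^\la P_t^\la = I + O(\la^{-1})$.

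Two clarifications worth making explicit. First, $\partial_s B(s)\circ\tilde\beta(P/\la)$ is not literally a spectral multiplier $-\la\,m_s(P/\la)$: the parametrix $B(s)$ is built from the pseudodifferential family $A(s)$ in \eqref{lapdec}, not from $P$ alone, so the composition is a band-limited pseudodifferential operator rather than a function of $P$. Your Schwartz-kernel strategy for the endpoints still applies to such operators, but the appeal to Mihlin--H\"ormander multiplier theory for $1<p<\infty$ should be replaced by (or justified through) the Calder\'on--Vaillancourt/Seeley $L^p$-boundedness of $OPS^0$; in practice the unified kernel estimate makes this distinction immaterial. Second, the regime of small $\la$ (where $c/\la$ could exceed the collar width $\delta_0$) needs a word: either $f_\la=0$ because $\supp\beta\subset(0,\infty)$ misses the discrete spectrum near zero, or $\la$ is bounded away from $0$ and $\infty$ so that all constants are benign; the paper handles this via Theorem \ref{thm1} and the maximum principle.
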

		The power gain $\la^{-\frac1p}$  is  due to the rapid decay property  in $\la t$, like $(1+\la t)^{-N}$.  This result is sharp in the sense that it can be saturated by Steklov eigenfunctions on Euclidean balls.  As a corollary, we have the following two-sided interior $L^p$ estimates for Steklov eigenfunctions.
	
	\begin{corollary}
		For $1\le p\le \infty$, we have 
		\begin{equation}\label{eqnorm}
			\|u_\la\|_{L^p(\Omega)}\approx \la^{-\frac1p}\|e_\la\|_{L^p(M)}.
		\end{equation}
	\end{corollary}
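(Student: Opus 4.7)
The plan is to derive this corollary directly from Theorem \ref{thm3} by choosing the pseudodifferential operator $P$ to be the Dirichlet-to-Neumann operator $\mathcal{N}$. As noted in the introduction, $\mathcal{N}\in OPS^1(M)$ is classical and self-adjoint with principal symbol $p(x,\xi)=|\xi|_{g(x)}$, so this choice is admissible in the hypothesis of Theorem \ref{thm3}.

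For $\la>0$, I would fix any cutoff $\beta\in C_0^\infty(\mathbb{R}^+)$ with $\beta(1)=1$, supported in a small neighborhood of $1$ inside $\mathbb{R}^+$. Since $e_\la$ is an eigenfunction of $\mathcal{N}$ with eigenvalue $\la$, the spectral functional calculus yields
\begin{equation*}
\beta(\mathcal{N}/\la)e_\la=\beta(1)e_\la=e_\la.
\end{equation*}
Hence $e_\la$ is precisely of the form $f_\la=\beta(P/\la)f$ required by Theorem \ref{thm3}, with the choice $P=\mathcal{N}$ and $f=e_\la$.

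Applying Theorem \ref{thm3} to this $f_\la=e_\la$ then yields
\begin{equation*}
\|u_\la\|_{L^p(\Omega)}=\|\mathcal{H}e_\la\|_{L^p(\Omega)}\approx \la^{-\frac{1}{p}}\|e_\la\|_{L^p(M)}
\end{equation*}
for all $1\le p\le\infty$, which is precisely \eqref{eqnorm}. The case $\la=0$ corresponds to constant boundary data with constant harmonic extension, where the equivalence is trivial (both sides scale like the volume to the $1/p$ power, and $\la^{-1/p}$ is not present since no decay is needed).

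There is no substantial obstacle here: the corollary is a specialization of Theorem \ref{thm3} to boundary data that are DtN eigenfunctions. The only point requiring a moment's care is that $\mathcal{N}$ is an allowable choice for $P$ in that theorem, which is a standard microlocal fact recalled in the introduction, and that $e_\la$ genuinely lies in the spectral range cut out by $\beta(\mathcal{N}/\la)$, which is immediate from the eigenfunction identity above.
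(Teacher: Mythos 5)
Your proposal is correct and matches the paper's intended derivation: the corollary is exactly Theorem \ref{thm3} applied with $P=\mathcal{N}$ and a cutoff $\beta\in C_0^\infty(\mathbb{R}^+)$ with $\beta(1)=1$, so that $\beta(\mathcal{N}/\la)e_\la=e_\la$ and $e_\la$ plays the role of $f_\la$. The paper states this as an immediate corollary without further argument, and your observation that $\mathcal{N}$ is an admissible choice of $P$ (recalled in the introduction) together with the spectral-calculus identity is precisely the content that makes it immediate.
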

Motivated by the study of nodal sets, the upper bounds in \eqref{eqnorm} were first obtained in our joint work with Huang and Sire \cite{HSWZ2023} for generalized Steklov eigenfunctions with rough potentials.

\subsection{Almost-orthogonality, transversal restrictions and bilinear estimates}We denote the interior and boundary inner products as
\[\langle u,v \rangle_\Omega=\int_\Omega u\bar{v}\ \ \ \ \text{and} \ \ \ \ \langle f,g \rangle_M=\int_M f\bar{g}.\] It is easy to see the gradients of Steklov eigenfunctions are orthogonal on $\Omega$. Indeed,
$$\langle \nabla u_{\la},\nabla u_{\mu} \rangle_\Omega = \langle \mathcal{N} e_{\la},e_{\mu} \rangle_M = 0, \ \forall \la\neq \mu.$$
But for general $\Omega$ other than Euclidean balls,  Steklov eigenfunctions are  not orthogonal  in $L^2(\Omega)$. However, we can show they are almost-orthogonal. 	Let $\chi\in \mathcal{S}(\mathbb{R})$ satisfy $$\supp \hat \chi\subset (\frac12,1),\ \ \chi(0)=1.$$ As before, 	let $P\in OPS^1(M)$ be classical and self-adjoint with principal symbol $p(x,\xi)=|\xi|_{g(x)}$. 
\begin{theorem}[Almost-orthogonality]\label{thm4}
	Let $v_\la = \mathcal{H}\circ\chi(P-\la)f,\ w_\mu=\mathcal{H}\circ\chi(P-\mu)g$. We have
	\begin{equation}\label{inp}
		|\langle v_{\la}, w_{\mu} \rangle_\Omega| \ls (1+|\la+\mu|)^{-1}(1+|\la-\mu|)^{-N}\|f\|_{L^2(M)}\|g\|_{L^2(M)},\ \forall N.
	\end{equation}
\end{theorem}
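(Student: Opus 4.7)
The plan is to reduce the $\Omega$-inner product to a boundary operator-norm estimate via adjointness, then combine functional calculus with the compactly supported half-wave representation of $\chi(P-\la)$ to extract both the $(1+|\la+\mu|)^{-1}$ and the $(1+|\la-\mu|)^{-N}$ decays.

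By adjointness,
\begin{equation*}
\langle v_\la, w_\mu\rangle_\Omega = \langle T_{\la,\mu}f,\; g\rangle_M,\qquad T_{\la,\mu}:=\chi(P-\mu)^*\mathcal{H}^*\mathcal{H}\chi(P-\la),
\end{equation*}
so it suffices to show $\|T_{\la,\mu}\|_{L^2(M)\to L^2(M)}\ls (1+|\la+\mu|)^{-1}(1+|\la-\mu|)^{-N}$. I apply the parametrix for $\mathcal{H}$ from Lemma~\ref{parametrix}, which writes $\mathcal{H}\phi(t,\cdot) = e^{-tP}a(t,P)\phi + R_t\phi$ with $a(t,\cdot)$ a bounded zeroth-order symbol and $R_t$ smoothing with rapid decay in $t$. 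Composing against the slice measure $J(t)\,dt$ and integrating over $t\in[0,\delta]$ yields
\begin{equation*}
\mathcal{H}^*\mathcal{H} = F(P) + E,\qquad F(\tau)=\int_0^\delta |a(t,\tau)|^2 e^{-2t\tau}J(t)\,dt,
\end{equation*}
with $|F(\tau)|\ls \min(\delta,\tau^{-1})\ls (1+\tau)^{-1}$. The principal contribution to $T_{\la,\mu}$ is $\overline{\chi}(P-\mu)F(P)\chi(P-\la)$, which is a function of $P$, so its $L^2$-norm equals $\sup_{\tau\ge 0}|\chi(\tau-\mu)F(\tau)\chi(\tau-\la)|$.

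Using the Schwartz decay of $\chi$, this is dominated by $\sup_{\tau\ge 0}(1+|\tau-\la|)^{-N_0}(1+|\tau-\mu|)^{-N_0}(1+\tau)^{-1}$. A case split on whether $\tau\ge (\la+\mu)/4$ or $\tau<(\la+\mu)/4$ gives the target bound when $N_0 = N+1$: in the first region $(1+\tau)^{-1}\ls(1+\la+\mu)^{-1}$ and the triangle inequality $\max(|\tau-\la|,|\tau-\mu|)\ge|\la-\mu|/2$ furnishes the factor $(1+|\la-\mu|)^{-N}$; in the second, assuming WLOG $\mu\ge\la$, one has $|\tau-\mu|\gs\la+\mu$ and $|\tau-\mu|\gs|\la-\mu|$, so $(1+|\tau-\mu|)^{-N_0}$ absorbs both required factors.

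The error $E$ collects subprincipal terms from the parametrix expansion together with the smoothing remainder from $R_t$. Since subprincipal symbols need not be functions of $|\xi|_g$ alone, I iterate the argument above: at each order, peel off the next function-of-$P$ piece (treated as above with improved $\la^{-1}$-decay) and handle the off-diagonal residue by the almost-orthogonality estimate $\|\chi(P-\mu)^*B\chi(P-\la)\|_{L^2\to L^2}\ls (1+|\la-\mu|)^{-N}$, valid for every zeroth-order $\Psi$DO $B$ by virtue of $\supp\hat\chi\subset(1/2,1)$ and Egorov's theorem applied to the half-wave representation. After finitely many iterations the leftover piece is genuinely smoothing; it contributes $\ls(1+\la)^{-N}(1+\mu)^{-N}$ because such an operator is bounded $H^{-N}\to H^N$ and $\chi(P-\la)\colon L^2\to H^{-N}$ has norm $\ls(1+\la)^{-N}$ by Schwartz decay of $\chi$ on the spectrum. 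The main obstacle is precisely this iterative bookkeeping of subprincipal off-diagonal contributions together with verifying the zeroth-order almost-orthogonality bound, ensuring that no non-function-of-$P$ piece destroys the product decay structure.
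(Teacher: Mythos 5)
Your strategy—reducing to an operator-norm bound for $T_{\la,\mu}=\chi(P-\mu)^*\mathcal{H}^*\mathcal{H}\chi(P-\la)$—is reasonable and genuinely different from the paper's (the paper computes the kernel of $\mathcal{H}\circ\chi(P-\la)$ via Lemma~\ref{intkernel}, reduces to slice-wise inner products, and extracts the $(1+|\la-\mu|)^{-N}$ decay by integration by parts in the oscillatory integral $\int e^{i\mu(\gamma d_g(x,y)-d_g(x,z))}\,dx$ with $\gamma=\la/\mu$). However, your argument has a genuine gap at its central step.

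You quote Lemma~\ref{parametrix} as giving $\mathcal{H}\phi(t,\cdot)=e^{-tP}a(t,P)\phi+R_t\phi$, but the lemma actually gives $P_tf(x)=(2\pi)^{-n}\int e^{ix\cdot\xi}b(t,x,\xi)\hat f(\xi)\,d\xi$ with $b\in\mathcal{P}_e^0$ and leading term $b_0(t,x,\xi)=e^{-\int_0^ta(s,x,\xi)\,ds}$. This is a genuine pseudodifferential operator whose symbol depends on $x$ through the metric $g_t$; it is \emph{not} a function of $P$ unless the collar has special symmetry (Euclidean ball, warped product). Consequently $\mathcal{H}^*\mathcal{H}$ is a pseudodifferential operator of order $-1$ with $x$-dependent principal symbol $\approx\int_0^\delta e^{-2\int_0^t|\xi|_{g_s(x)}ds}J(t,x)\,dt$, and it is \emph{not} $F(P)+E$ with $E$ of strictly lower order. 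The non-function-of-$P$ part has the same order $-1$ as any ``function-of-$P$ piece'' you peel off, so the iteration you describe does not reduce order and does not terminate.

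This matters quantitatively: your cited almost-orthogonality estimate $\|\chi(P-\mu)^*B\chi(P-\la)\|\ls(1+|\la-\mu|)^{-N}$ is stated for $B\in OPS^0$ and produces only the $(1+|\la-\mu|)^{-N}$ factor. Applied to the order $-1$ residue as stated, it loses the $(1+\la+\mu)^{-1}$ factor entirely, which is the whole point of the theorem. What you actually need is a refined statement: for $B\in OPS^{-1}$, $\|\chi(P-\mu)^*B\chi(P-\la)\|\ls(1+\la+\mu)^{-1}(1+|\la-\mu|)^{-N}$. This \emph{is} provable by a commutator argument—write $(\mu-\la)\chi(P-\mu)^*B\chi(P-\la)$ as a sum of terms $\chi_j(P-\mu)^*B_j\chi_k(P-\la)$ with $B_j$ still of order $-1$ (using that $[P,B]\in OPS^{-1}$), iterate $N$ times, and then observe $\|B_j\chi_k(P-\la)\|\ls(1+\la)^{-1}$ since $B_j$ has order $-1$ and $\chi_k(P-\la)$ is spectrally localized near $\la$; taking adjoints gives $(1+\mu)^{-1}$, hence $(1+\la+\mu)^{-1}$ on taking the minimum. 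If you replace the false decomposition $\mathcal{H}^*\mathcal{H}=F(P)+E$ by the correct statement that $\mathcal{H}^*\mathcal{H}\in OPS^{-1}$ modulo smoothing, and replace the $OPS^0$ almost-orthogonality lemma by this sharper $OPS^{-1}$ version, your operator-theoretic route does give a valid alternative proof—arguably cleaner than the paper's direct FIO kernel computation, and avoiding stationary phase. But as written, the proposal rests on a misquoted form of the parametrix and a peeling scheme that cannot supply the $(1+\la+\mu)^{-1}$ gain.
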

The crux of the proof is the kernel formula in Lemma \ref{intkernel}. The question on the $L^2$-orthogonality of Steklov eigenfunctions has been considered by Auchmuty \cite{A04}, Auchmuty-Rivas \cite{AR16}, Cho-Rivas \cite{CR22}. Specifically, Cho-Rivas \cite{CR22} analyzed the problem on rectangles by the explicit formula for the inner products in $L^2(\Omega)$, and provide accompanying numerics. However, a complete description of the orthogonality in $L^2(\Omega)$ has not been made so far. We address this issue in the following corollary. 
\begin{corollary}For Steklov eigenfunctions $u_\la$ and $u_\mu$, we have
\begin{equation}
	|\langle u_{\la}, u_{\mu} \rangle_\Omega| \ls (1+\la+\mu)^{-1}(1+|\la-\mu|)^{-N}\|e_\la\|_{L^2(M)}\|e_\mu\|_{L^2(M)},\ \ \forall N.
\end{equation}
\end{corollary}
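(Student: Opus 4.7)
The plan is to deduce this corollary as a direct specialization of Theorem \ref{thm4}, by taking $P = \mathcal{N}$, the Dirichlet-to-Neumann operator on $(M,g)$. It is standard (see for instance Taylor \cite{TaylorPDE2}) that $\mathcal{N}$ is a classical self-adjoint element of $OPS^1(M)$ with principal symbol $|\xi|_{g(x)}$, so the hypotheses on $P$ required by Theorem \ref{thm4} are satisfied for this choice.

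With this choice in force, I set $f = e_\la$ and $g = e_\mu$ in Theorem \ref{thm4}. Since $\mathcal{N}e_\la = \la e_\la$, the spectral theorem together with the normalization $\chi(0)=1$ gives
\[\chi(P-\la)e_\la = \chi(\mathcal{N}-\la)e_\la = \chi(0)e_\la = e_\la,\]
and hence $v_\la = \mathcal{H}\,\chi(P-\la)e_\la = \mathcal{H}e_\la = u_\la$. The identical computation yields $w_\mu = u_\mu$.

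Substituting these identities into the conclusion \eqref{inp} of Theorem \ref{thm4} produces
\[|\langle u_\la, u_\mu\rangle_\Omega| \ls (1+|\la+\mu|)^{-1}(1+|\la-\mu|)^{-N}\|e_\la\|_{L^2(M)}\|e_\mu\|_{L^2(M)},\]
and since the Steklov eigenvalues are nonnegative, $|\la+\mu| = \la+\mu$, which is exactly the stated bound. There is no substantive obstacle: the analytic content is entirely carried by Theorem \ref{thm4}, and the corollary follows from a clean application of functional calculus to the identity $\mathcal{N}e_\la = \la e_\la$.
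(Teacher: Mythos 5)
Your proposal is correct and is precisely the intended deduction. The paper does not spell out a proof of this corollary, but the only sensible reading is exactly what you give: specialize $P$ to the Dirichlet-to-Neumann operator $\mathcal{N}$ (which by Lemma \ref{symblem} is self-adjoint, classical, and has principal symbol $|\xi|_{g(x)}$, so the hypotheses of Theorem \ref{thm4} apply), note that $\chi(\mathcal{N}-\la)e_\la=\chi(0)e_\la=e_\la$ by functional calculus on the eigenfunction, so $v_\la=u_\la$ and $w_\mu=u_\mu$, and then read off \eqref{inp}. The final remark that $|\la+\mu|=\la+\mu$ because Steklov eigenvalues are nonnegative is the correct way to pass from the absolute value in \eqref{inp} to the form stated in the corollary.
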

Thus, by Theorem \ref{thm3} we have for $\mu\ge\la\ge1$
\begin{equation}
	|\langle u_{\la}, u_{\mu} \rangle_\Omega| \ls (\la/\mu)^{\frac12}(1+|\la-\mu|)^{-N}\|u_\la\|_{L^2(\Omega)}\|u_\mu\|_{L^2(\Omega)},\ \ \forall N.
\end{equation}
This significantly improves the trivial bounds by Cauchy-Schwarz, and provide a rigorous justification for the almost-orthogonality of Steklov eigenfunctions in $L^2(\Omega)$.

Similarly, we can establish the  bilinear estimates and transversal restriction estimates.
\begin{theorem}[Bilinear estimates]\label{thm5}
	Let $v_\la = \mathcal{H}\circ\chi(P-\la)f,\ w_\mu=\mathcal{H}\circ\chi(P-\mu)g$ with $\mu\ge\la \ge 1$. Then we have
	\begin{equation}\label{bilin}
		\| v_{\la} w_{\mu} \|_{L^2(\Omega)} \ls \mu^{-\frac12}B\|f\|_{L^2(M)}\|g\|_{L^2(M)},
	\end{equation}
	where
	\begin{equation}
		B=\begin{cases}
			\la^\frac14,\ \ \ \ \ \ \ \ \ \ n=2\\
			\la^{\frac12}\sqrt{\log\la},\ \ n=3\\
			\la^{\frac{n-2}2},\ \ \ \ \ \ \  n\ge4.\end{cases}
	\end{equation}
\end{theorem}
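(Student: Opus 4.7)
The approach is to reduce the bilinear $L^2(\Omega)$ bound to a bilinear spectral cluster estimate on the boundary $M$, using the parametrix for the Poisson integral operator from Lemma \ref{parametrix} together with the interior decay from Theorem \ref{thm2}.

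First, I would use Lemma \ref{parametrix} to write, modulo a remainder $r_\la(t)$ with $\|r_\la(t)\|_{L^2(M)}\ls \la^{-N}\|f\|_{L^2(M)}$,
\[ v_\la(t,\cdot) = e^{-tP}\chi(P-\la)f + r_\la(t),\]
and analogously for $w_\mu$. Since $e^{-tP}$ and $\chi(P-\la)$ are both Borel functions of the self-adjoint $P$ they commute, so the principal part is exactly a spectrally localized function on $M$ at frequency $\la$, with $L^2$-norm $\ls e^{-c\la t}\|f\|_{L^2(M)}$ (this is the mechanism behind Theorem \ref{thm2} at $p=2$); analogously for $w_\mu$ at frequency $\mu$.

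Second, using the tubular coordinates $\phi$ with $h=dt^2+g_t$ and the bounded Jacobian on $[0,\delta_0]\times M$, and treating the compact region $\Omega_{\delta_0}$ separately via Corollary \ref{smoothdecay} (which contributes only $O(\la^{-N})$), we obtain
\[\|v_\la w_\mu\|_{L^2(\Omega)}^2 \ls \int_0^{\delta_0}\|v_\la(t,\cdot)\,w_\mu(t,\cdot)\|_{L^2(M)}^2\, dt + O(\la^{-N}).\]
Third, at each fixed $t$ I would invoke the Burq--G\'erard--Tzvetkov-type bilinear spectral cluster estimate on $(M,g)$: for $F,G$ spectrally localized in unit windows around $\la$ and $\mu$ with $\mu\ge\la\ge 1$,
\[\|FG\|_{L^2(M)}\ls B\,\|F\|_{L^2(M)}\|G\|_{L^2(M)},\]
with $B$ as in the statement. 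Applied to $F=e^{-tP}\chi(P-\la)f$ and $G=e^{-tP}\chi(P-\mu)g$, this gives
\[\|v_\la(t,\cdot)w_\mu(t,\cdot)\|_{L^2(M)}\ls B\,e^{-c(\la+\mu)t}\|f\|_{L^2(M)}\|g\|_{L^2(M)}.\]
The final step is $\int_0^{\delta_0}e^{-2c(\la+\mu)t}\,dt\ls (\la+\mu)^{-1}\ls \mu^{-1}$, which yields $\|v_\la w_\mu\|_{L^2(\Omega)}\ls \mu^{-1/2}B\,\|f\|_{L^2(M)}\|g\|_{L^2(M)}$.

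The main obstacle will be the bilinear spectral cluster estimate on a general smooth compact $n$-manifold in the precise form stated. For $n=2$ this is the surface bilinear eigenfunction inequality of Burq--G\'erard--Tzvetkov. For $n\ge 4$ it reflects a Klainerman--Machedon/Wolff-type half-derivative gain over Sogge's $L^4$-bound, obtained by decomposing the product microlocally into transversal and tangential pieces and applying bilinear restriction on the light cone; a plain H\"older application with Corollary \ref{smoothdecay} would only yield $\mu^{-1/2}\la^{(n-1)/2}$, so a true bilinear improvement is essential. The logarithmic loss when $n=3$ sits at the exact borderline. A secondary point is verifying the parametrix reduction uniformly in $t\in[0,\delta_0]$; here the exact commutation $e^{-tP}\chi(P-\la)=\chi(P-\la)e^{-tP}$ from the functional calculus of $P$ absorbs most of the bookkeeping into the smoothing remainders, which contribute the $\la^{-N}$ errors already collected.
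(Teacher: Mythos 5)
Your overall strategy matches the paper's proof sketch in Section~6: reduce to the slices $\Sigma_t$, invoke a Burq--G\'erard--Tzvetkov bilinear bound at each $t$, and integrate the exponential factor in $t$ to harvest the $\mu^{-1/2}$ gain. The gap is in your first step. Lemma~\ref{parametrix} gives $\mathcal{H}f(t,\cdot)=P_tf+R_tf$ with $R_t$ smoothing and $P_t\in OPS^0$ having symbol whose leading term is $e^{-\int_0^t a(s,x,\xi)\,ds}$, where $a(s,\cdot,\cdot)$ has principal symbol $|\xi|_{g_s(x)}$. This $P_t$ is \emph{not} $e^{-tP}$ up to an $O(\la^{-N})$ remainder: since $g_s$ genuinely evolves in $s$, the exponents $\int_0^t a(s,x,\xi)\,ds$ and $t|\xi|_{g(x)}$ already differ at the principal level by $O(t^2|\xi|)+O(t)$, and $P_t$ does not commute with $P$. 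Consequently $P_t\chi(P-\la)f$ is not a spectral cluster of $P$ in a unit window about $\la$, so the bilinear cluster estimate you invoke in step three does not apply to it directly, and your appeal to the functional calculus of $P$ (``they commute'') does not reach $P_t$.

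The paper bridges exactly this gap with Lemma~\ref{intkernel} (and its abstract form, Lemma~\ref{intkernel2} together with Proposition~\ref{Qbgtbi}): although $P_t\chi(P-\la)$ is not spectrally localized, its Schwartz kernel on $\Sigma_t$ has the same oscillatory-integral structure $\la^{(n-1)/2}e^{i\la d_g(x,y)}a_\la(t,x,y)$ as the kernel of $\chi(P-\la)$, with bounded amplitudes supported near $d_g(x,y)\approx1$, times the explicit factor $e^{-c\la G(t)}$. The BGT bilinear argument only uses this kernel form, not exact spectral support, and Proposition~\ref{Qbgtbi} records precisely that the bilinear bound with constant $B$ is stable under composition with $OPS^m$ operators. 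Replacing your false identity $P_t=e^{-tP}+O(\la^{-N})$ with Lemma~\ref{intkernel}, your steps two through four then go through and give the stated estimate.
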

Bilinear and multilinear eigenfunction estimates on closed manifolds have been proved by Burq-G\'erard-Tzvetkov \cite{BGTbi,BGTbi2}. Our result \eqref{bilin} shows that the same type of estimates still hold for harmonic functions. In comparison to the results in \cite{BGTbi,BGTbi2}, there is a power gain $\mu^{-\frac12}$ that only depends on the relatively high frequency $\mu$. This is due to the rapid decay property of harmonic functions. Moreover, the bound \eqref{bilin} is sharp in the sense that it can be saturated by Steklov eigenfunctions on Euclidean balls. 
\begin{corollary}
	If $B$ is defined as in Theorem \ref{thm6}, then we have the bilinear estimates for Steklov eigenfunctions
	\begin{equation}
		\|u_\la u_\mu\|_{L^2(\Omega)}\ls \mu^{-\frac12}B\|e_\la\|_{L^2(M)}\|e_\mu\|_{L^2(M)}.
	\end{equation}
\end{corollary}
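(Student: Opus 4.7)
The plan is to deduce the corollary directly from Theorem \ref{thm5} by specializing the operator $P$ to the Dirichlet-to-Neumann operator $\mathcal{N}$ and choosing the input data to be DtN eigenfunctions. Recall that $\mathcal{N}\in OPS^1(M)$ is classical and self-adjoint, with principal symbol $p(x,\xi)=|\xi|_{g(x)}$, so it is a valid choice for $P$ in the setup preceding Theorem \ref{thm5}.

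First, fix $P=\mathcal{N}$ and let $e_\la,e_\mu$ be the DtN eigenfunctions with eigenvalues $\la\le\mu$. By functional calculus, $(P-\la)e_\la=0$, hence $\chi(P-\la)e_\la=\chi(0)e_\la=e_\la$ thanks to the normalization $\chi(0)=1$. Therefore
\[
\mathcal{H}\circ\chi(P-\la)e_\la = \mathcal{H}e_\la = u_\la,
\]
and similarly $\mathcal{H}\circ\chi(P-\mu)e_\mu = u_\mu$. In other words, the pair $(u_\la,u_\mu)$ is exactly a special instance of $(v_\la,w_\mu)$ in Theorem \ref{thm5} with $f=e_\la$ and $g=e_\mu$.

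Assuming without loss of generality that $\mu\ge \la\ge1$ (the finite set of Steklov eigenvalues below $1$ contributes only trivially, since the corresponding harmonic extensions are bounded in $L^2(\Omega)$ by $\|e_\la\|_{L^2(M)}$ after absorbing constants into $\ls$), an application of Theorem \ref{thm5} yields
\[
\|u_\la u_\mu\|_{L^2(\Omega)} = \|v_\la w_\mu\|_{L^2(\Omega)} \ls \mu^{-\frac12}B\,\|e_\la\|_{L^2(M)}\|e_\mu\|_{L^2(M)},
\]
which is exactly the claimed inequality. There is essentially no obstacle: the corollary is a direct reduction, and the only point worth checking is the compatibility of the functional-calculus identity $\chi(\mathcal{N}-\la)e_\la=e_\la$ with the class of Schwartz multipliers $\chi$ allowed in Theorem \ref{thm5}, which holds since $\chi(0)=1$ and $e_\la$ is an exact eigenfunction of $\mathcal{N}$.
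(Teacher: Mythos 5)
Your argument is correct and is exactly the intended deduction: taking $P=\mathcal{N}$ (which the paper explicitly lists as a valid choice of $P$) and $f=e_\la$, $g=e_\mu$, the functional-calculus identity $\chi(\mathcal{N}-\la)e_\la=\chi(0)e_\la=e_\la$ reduces the corollary to Theorem \ref{thm5} verbatim. The paper does not spell this out, but this is the same one-line reduction it relies on, and your remark about absorbing the finitely many Steklov eigenvalues below $1$ into the implied constant correctly handles the $\mu\ge\la\ge1$ hypothesis of the theorem.
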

Equivalently, by Theorem \ref{thm3} we have for $\mu\ge\la\ge1$
\begin{equation}
	\| u_{\la} u_{\mu} \|_{L^2(\Omega)}\ls \la^{\frac12}B\|u_\la\|_{L^2(\Omega)}\|u_\mu\|_{L^2(\Omega)}.
\end{equation}
The bound $\la^\frac12 B$ is sharp on on Euclidean balls and only depends on the relatively low frequency $\la$.

\begin{theorem}[Transversal restriction estimates]\label{thm6}
	Let  $0\le k\le n-1$. Let $\Sigma$ be a $(k+1)$-dimensional submanifold in $\Omega$, intersecting the boundary $\partial\Omega=M$ transversally, with $\partial \Sigma\subset \partial\Omega$. For $\la\ge1$,  let $v_\la = \mathcal{H}\circ\chi(P-\la)f$. Then we have
	\begin{equation}\label{Qbgt}	\|v_\la\|_{L^p(\Sigma)}\ls \la^{-\frac1p}A\|f\|_{L^2(M)},\ \ 2\le p\le\infty.\end{equation}
	When $n\ge2$,
	\begin{itemize}
		\item If $k\le n-3$, then $A= \la^{\frac{n-1}2-\frac{k}p}$ for $2\le p\le \infty$
		\item If $k=n-2$, then $A= \la^{\frac{n-1}2-\frac{k}p}$ for $2< p\le \infty$, and $A= \la^{\frac12}(\log\la)^\frac12$ for $p=2$
		\item If $k=n-1$, then $A= \la^{\frac{n-1}2-\frac{k}p}$ for $\frac{2n}{n-1}\le p\le \infty$, and $A= \la^{\frac{n-1}4-\frac{n-2}{2p}}$ for $2\le p<\frac{2n}{n-1}$
	\end{itemize}
	and when $n=1$ and $k=0$, we have $A=1$ for $2\le p\le \infty$.
\end{theorem}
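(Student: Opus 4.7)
My plan is to foliate $\Sigma$ by its intersections with the level sets $\Sigma_t$, reduce the restriction estimate on $\Sigma$ to a one-parameter family of Burq--G\'erard--Tzvetkov (BGT) restriction estimates on the hypersurfaces $\Sigma_t$, and then integrate in $t$ using the rapid decay of $v_\lambda$ away from the boundary.

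First I would fix $T_0 \in (0, \delta)$ small. Because $\Sigma$ meets $M$ transversally along $\partial \Sigma$, the near-boundary piece $\Sigma \cap \Omega_{T_0}^{\mathsf{c}}$ admits a smooth Fermi-coordinate parametrization $(t, y) \mapsto (t, \psi(t, y))$ with $t \in [0, T_0]$ and $y$ ranging over the smooth $k$-dimensional submanifold $\Sigma_0 := \partial \Sigma \subset M$, where $\psi(0, \cdot) = \mathrm{id}_{\Sigma_0}$. The complementary piece $\Sigma \cap \Omega_{T_0}$ lies at positive distance from $M$, so by \eqref{smoothup4} applied to the quasimode $\chi(P-\lambda) f$ it contributes only $O(\lambda^{-N}\|f\|_{L^2(M)})$. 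The Jacobian of the parametrization is bounded above and below, so Fubini yields
\begin{equation*}
\|v_\lambda\|_{L^p(\Sigma)}^p \lesssim \int_0^{T_0} \|v_\lambda(t, \psi(t, \cdot))\|_{L^p(\Sigma_0)}^p \, dt + O(\lambda^{-Np})\|f\|_{L^2(M)}^p.
\end{equation*}

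The central step would be to prove the BGT-type restriction estimate
\begin{equation*}
\|v_\lambda(t, \psi(t, \cdot))\|_{L^p(\Sigma_0)} \lesssim A \, e^{-c\lambda t} \|f\|_{L^2(M)} + \lambda^{-N}\|f\|_{L^1(M)},
\end{equation*}
uniformly in $t \in [0, T_0]$, with $A = A(n, k, p)$ the exponent in the statement. To this end, I would use the parametrix for $\mathcal{H}$ (Lemma~\ref{parametrix}) to represent $v_\lambda(t, \cdot)$ as an oscillatory integral concentrated on $|\xi|_g \sim \lambda$ whose amplitude carries an $e^{-c\lambda t}$ weight. Pulling back by $\psi(t, \cdot) : \Sigma_0 \to \Sigma_t$ places the problem inside the BGT framework for restriction of frequency-$\lambda$ clusters on the $n$-dimensional manifold $\Sigma_t$ to its $k$-dimensional submanifold $\psi(t, \Sigma_0)$. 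The three regimes of the statement then reflect the standard BGT trichotomy: the stationary-phase bound for $k \le n-3$; a Stein--Tomas analysis producing the $(\log \lambda)^{1/2}$ endpoint at $p = 2$, $k = n-2$; and the Tomas--Stein transition at $p = 2n/(n-1)$ in the hypersurface subcase $k = n-1$, governed by the curvature of $\psi(t, \Sigma_0)$.

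Inserting this bound and integrating in $t$, the elementary estimate $\int_0^{T_0} e^{-cp\lambda t}\, dt \lesssim \lambda^{-1}$ yields $\|v_\lambda\|_{L^p(\Sigma)}^p \lesssim \lambda^{-1} A^p \|f\|_{L^2(M)}^p$, hence $\|v_\lambda\|_{L^p(\Sigma)} \lesssim \lambda^{-1/p} A \|f\|_{L^2(M)}$ after taking $p$-th roots; the $n = 1$, $k = 0$ case is immediate from Theorem~\ref{thm2}. The hard part will be the middle step: establishing sharp restriction estimates with uniform constants for the $t$-parametrized family of oscillatory integrals produced by the parametrix. The phase function involves the Fermi distance to $M$ together with the extrinsic parametrization $\psi(t, \cdot)$, so its non-degeneracy and the curvature properties of $\psi(t, \Sigma_0)$ (inherited from $\Sigma_0$) must be verified uniformly in $t \in [0, T_0]$; the delicate endpoint analyses---the logarithmic loss at $p = 2$, $k = n-2$ and the Tomas--Stein interpolation at $k = n-1$---will require the refinements developed in \cite{BGTbi}, now carried out with the $e^{-c\lambda t}$-weighted amplitude arising from the Poisson kernel.
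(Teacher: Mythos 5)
Your plan coincides with the paper's: the proof sketch for Theorem~\ref{thm6} is precisely to reduce to restriction bounds on each slice, where $\Sigma\cap\Sigma_t$ is a $k$-dimensional submanifold of the $n$-dimensional hypersurface $\Sigma_t$, then invoke the kernel formula of Lemma~\ref{intkernel} (which exhibits $\mathcal{H}\circ\chi(P-\la)$ on $\Sigma_t$ as an oscillatory integral of the same form as $\chi(P-\la)$, weighted by $e^{-c\la G(t)}$) together with the Burq--G\'erard--Tzvetkov restriction arguments, and finally integrate the exponential factor in $t$ to produce the $\la^{-1/p}$ gain. Your foliation, the reduction to a uniform BGT estimate on slices, and the $\int_0^{T_0}e^{-cp\la t}\,dt\ls\la^{-1}$ step all mirror the paper's argument.
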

Eigenfunction restriction estimates on closed manifolds were obtained by Burq-G\'erard-Tzvetkov \cite{BGT} and Hu \cite{hu}. See also Tataru \cite{tataru} for the first appearance of
this type of estimates. The transversal restriction estimates of harmonic functions on hypersurfaces can be found  in Taylor \cite[Exercise 4 in Section 11 of Chapter 7 ]{TaylorPDE2}. Our result \eqref{Qbgt} precisely determines the dependence on the spectral frequency, and it is sharp in the sense that it can be saturated by Steklov eigenfunctions on Euclidean balls.

\begin{corollary}
	If $\Sigma$ and $A$ are defined as in Theorem \ref{thm5}, then we have the transversal restriction estimates for Steklov eigenfunctions
	\begin{equation}
		\|u_\la\|_{L^p(\Sigma)}\ls \la^{-\frac1p}A\|e_\la\|_{L^2(M)}.
	\end{equation}
\end{corollary}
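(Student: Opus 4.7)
The plan is to reduce this corollary directly to Theorem \ref{thm6} by making the canonical choice of $P$ and exploiting the fact that $e_\la$ is an exact eigenfunction. Recall that the Dirichlet-to-Neumann operator $\mathcal{N}$ is, as noted in the introduction, a classical self-adjoint element of $OPS^1(M)$ whose principal symbol is $|\xi|_{g(x)}$. So $P=\mathcal{N}$ is admissible in the hypothesis of Theorem \ref{thm6}.

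First I would rewrite $u_\la$ in the form that appears in Theorem \ref{thm6}. Since $e_\la$ satisfies $\mathcal{N}e_\la=\la e_\la$, the spectral functional calculus gives
\begin{equation*}
    \chi(P-\la)e_\la=\chi(\mathcal{N}-\la)e_\la=\chi(0)\,e_\la=e_\la,
\end{equation*}
using the normalization $\chi(0)=1$ from the definition of $\chi$. Consequently
\begin{equation*}
    v_\la:=\mathcal{H}\circ\chi(P-\la)e_\la=\mathcal{H}e_\la=u_\la.
\end{equation*}
Next I would simply invoke Theorem \ref{thm6} with $f=e_\la$: for any $2\le p\le\infty$ and any transversal $(k+1)$-dimensional submanifold $\Sigma$ with $\partial\Sigma\subset M$,
\begin{equation*}
    \|u_\la\|_{L^p(\Sigma)}=\|v_\la\|_{L^p(\Sigma)}\ls \la^{-\frac1p}A\,\|e_\la\|_{L^2(M)},
\end{equation*}
with $A$ exactly as listed in the three regimes of $k$ (and the $n=1$ case), which is the claim.

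There is essentially no obstacle here beyond verifying the one identification above; the content is entirely inside Theorem \ref{thm6}. The only mild point to double-check is that the hypothesis $\la\ge 1$ in Theorem \ref{thm6} is harmless for this corollary, since the Steklov spectrum accumulates only at $+\infty$ and the finitely many eigenvalues with $\la<1$ can be absorbed into the implicit constant by standard elliptic regularity (indeed, for bounded $\la$ the estimate reduces to the trace/Sobolev inequality applied to the smooth harmonic extension $u_\la$, where $\|u_\la\|_{L^p(\Sigma)}\ls \|e_\la\|_{L^2(M)}$ trivially).
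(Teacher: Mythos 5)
Your proposal is correct and is precisely the intended reduction: take $P=\mathcal{N}$, observe that $\chi(\mathcal{N}-\la)e_\la=\chi(0)e_\la=e_\la$ so $v_\la=u_\la$, and then quote Theorem \ref{thm6} with $f=e_\la$; the remark about absorbing the finitely many Steklov eigenvalues with $\la<1$ into the implicit constant is also a valid (and sensible) housekeeping step. This matches the paper's (unwritten but evident) proof, as it does for the analogous corollaries of Theorems \ref{thm4} and \ref{thm5}; the only thing worth flagging is that the paper's corollary statement contains a cross-reference typo (``Theorem \ref{thm5}'' where ``Theorem \ref{thm6}'' is meant), which you correctly silently fix.
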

Equivalently, by Theorem \ref{thm3} we have 
\begin{equation}
	\| u_{\la}\|_{L^p(\Sigma)}\ls \la^{\frac12-\frac1p}A\|u_\la\|_{L^2(\Omega)}.
\end{equation}
The bound $\la^{\frac12-\frac1p}A$ is sharp on Euclidean balls.

\subsection{Organization of the paper} In Section 2, we introduce the symbol classes and the Dirichlet-to-Neumann operator. In Section 3, we prove the Theorem \ref{thm1} and its sharpness. In Section 4, we construct a parametrix for the Poisson integral operator and calculate the kernel of the its composition with the spectral cluster. In Section 5, we prove Theorem \ref{thm2} and Theorem \ref{thm3} by the parametrix in Lemma \ref{parametrix}. In Section 6, we prove Theorem \ref{thm4}  by Lemma \ref{intkernel} and give a proof sketch for Theorems \ref{thm5} and \ref{thm6}, and discuss related results on the numerical approximation of harmonic functions with various boundary conditions. In the Appendix, we prove Lemma \ref{pdolemma}.
\subsection{Notations}Throughout this paper, $X\ls Y$ means $X\le CY$  for some positive constants $C$ that are independent of key parameters: the distance $t$ to the boundary, and the frequencies $\la$, $\mu$. If $X\ls Y$ and $Y\ls X$, we denote $X\approx Y$.

%		Our second result concerns the upper bound $t$-Restriction $L^p$ norm for $2 \le p < \infty$ on analytic manifolds.
%		
%		\begin{theorem}\label{upbound}
%			Let $(\Omega^{n+1},h)$ be analytic manifold with analytic boundary $(M,g)$, for any $\eps > 0$, there are constants $\delta_0(\eps)>0$, $C(\eps)>0$ and $\Lambda(\eps) > 0$ depends on $(\Omega,h)$ and $\eps$, such that for any $0 \le t \le \delta_0$ and $\la > \Lambda$, the following $t$-Restriction upper bound holds
%			\begin{equation}\label{upstimate}
%				\|u_\la\|_{L^p(\Sigma_t)} \le C\la^{\sigma(n,p)}e^{-(1-\eps)t\la}\|u_\la\|_{L^2(M)}.
%			\end{equation}
%			where
%			\[
%			\sigma(n,p)=\begin{cases}\frac{n-1}2(\frac12-\frac1p),\ \ \  2\le p<\frac{2(n+1)}{n-1}\\
%				\frac{n-1}2-\frac np,\ \ \ \ \ \  \frac{2(n+1)}{n-1}\le p\le \infty.\end{cases}\]	
%		\end{theorem}
%		
%		Modulo the $\eps$ loss, this  estimate is saturated by balls in $\Rn$, see Example \ref{exBall} in next section.

		\section{Preliminaries}
		
		In this section, we introduce some background on pseudo-differential operators. Let $D_x=\frac1 i\partial_x$.
		
	Symbol class $S^m$ is the set of smooth functions $p(x,\xi)$ satisfying
		\begin{equation}\label{regsymbol}
			|D_x^{\beta} D_{\xi}^{\alpha} p(x,\xi) |\le C_{\alpha\beta}(1+|\xi|)^{m-|\alpha|},\; \forall \alpha,\beta.
		\end{equation}
		Let $OPS^m$ be the set of pseudo-differential operators with symbols in $S^m$. Let $\delta_0>0$. We call $p(t,x,\xi) \in C^{\infty}([0,\delta_0]\times\Rn\times\Rn)$ a smooth family of symbols in $S^m$ if 
		\begin{equation}\label{smbounds}
			|D^j_tD_x^{\beta} D_{\xi}^{\alpha} p(t,x,\xi) |\le C_{j\alpha\beta}(1+|\xi|)^{m-|\alpha|},\; \ t\in [0,\delta_0],\ \forall j,\alpha,\beta.
		\end{equation}
		The associated family of operators are called a smooth family in $OPS^m$.
		
		Symbol class $\mathcal{P}^{m}$ is the set of functions $p(t,x,\xi)$ in $C^{\infty}([0,\delta_0]\times\Rn\times\Rn)$ such that 
		\[|D_x^{\beta} D_{\xi}^{\alpha}(t^k D_t^\ell p(t,x,\xi))| \le C_{\alpha\beta}(1+|\xi|)^{m-k+\ell-|\alpha|},\; t\in [0,\delta_0],\ \forall k,\ell,\alpha,\beta.\]
		Symbol class $\mathcal{P}^{m}_e$ is the set of functions $p(t,x,\xi) \in C^{\infty}([0,\delta_0]\times\Rn\times\Rn)$ such that
		\begin{equation}\label{psymbol2}
			p(t,x,\xi) = q(t,x,\xi)e^{-ct\langle \xi\rangle}
		\end{equation}
		with $\langle \xi\rangle=\sqrt{1+|\xi|^2}$ for some $c > 0$ and $q(t,x,\xi) \in \mathcal{P}^{m}$. See e.g. Taylor \cite[Chapter 7]{TaylorPDE2}.

%		
%		\begin{definition}
%			Manifolds with multiples ends.
%		\end{definition}
%		When $M=\partial\Omega$ contains multiple connected components $M^1,M^2,...,M^m$, $m \ge 2$. Suppose we have $m$ functions $\{f_j\}_{j=1}^m$, and for each $j$, $f_j$ is defined on component $M^j$, we define $[f_1,f_2,...,f_m]_M$ as the function $f$ on $M$ such that its restriction to $M^j$ equals $f_j$, for $1 \le j \le m$. Similarly, we can define the $t$ sets with respect to each single component, $\Omega^j_t = \{x\in \Omega: dist(x,\partial \Omega)\ge t\}$, $\Sigma^j_t = \{x\in \Omega: dist(x,\partial \Omega)=t\}$. We want to mention that $\partial \Omega^j_t = \Sigma^j_t \cup (\cup_{i\neq j}M^i)$, and the injective range for map $\phi$ on $M^j$ may be larger than $\delta(\Omega)$.

	%	\begin{definition}
		%	Dirichlet to Neumann operator.
	%	\end{definition}
		For any $f\in L^2(M)$, we denote its harmonic extension on $\Omega$ by $u=\mathcal{H}f$ that solves the boundary value problem \eqref{lapdirichlet}.
		Then the DtN operator on $M$ is defined as $\mathcal{N}f = \partial_\nu u|_M$, where $\nu$ is the outward unit norm vector. In addition, $\mathcal{N}$ is self-adjoint.

Using the Fermi coordinates, for each $0 \le t \le \delta_0<\delta$, we can define a DtN operator $\mathcal{N}_t$ on $\Sigma_t$. By \cite[Section C of Chapter 12]{TaylorPDE2}, the Laplacian $\Delta$ on $\Omega$ in local coordinates with respect to the boundary has the form
\begin{equation}\label{lapdec0}\Delta=(\partial_t-A_1(t))(\partial_t+A(t)) +B(t)\end{equation}
where the smooth families $A(t),A_1(t)\in OPS^1(M)$ and $B(t)\in OPS^{-\infty}(M)$. Moreover, $A(t)=\mathcal{N}_t$ modulo smoothing operators, then $\mathcal{N}_t$ is a smooth family in $OPS^1(M)$. By \cite[Proposition C.1 in Chapter 12]{TaylorPDE2}, we have explicit formulas for the principal and sub-principal symbols of $\mathcal{N}_t$.

%By extending $\Omega$ to a suitable closed manifold without boundary such that $\Omega^c$ is connected, for example, we can glue  copy of $\Omega$ along with the boundary and extend the metric smoothly to this copy, note we don't require the new metric to be symmetric with respect to the boundary. By the method of layer potentials, one has $S(t)$ and $N(t)$ as two smooth families in $OPS^{-1}$, such that $S(t): H^{s}\rightarrow H^{s-1}$ are isomorphisms and
%		\[ \mathcal{N}_t S(t) = \frac{1}{2}(I-N(t)),\]
	
		\begin{lemma} \label{symblem}
			The Dirichlet-to-Neumann operator satisfies
			\[\mathcal{N}_t=  \sqrt{-\Delta_{g_t}}-\mathcal{B}_t\]
			where $\mathcal{B}_t\in OPS^{0}(\Sigma_t)$ has principal symbol
			\begin{equation}
				\sigma_t(x,\xi)=\frac12(\text{Tr}\mathcal{W}_{t,x}- \frac{\langle \mathcal{W}_{t,x}^*\xi,\xi\rangle_{t} }{\langle\xi,\xi\rangle_{t} }).
			\end{equation}
		\end{lemma}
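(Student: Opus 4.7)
The strategy is to exploit the factorization \eqref{lapdec0} of the Laplacian,
\[\Delta=(\partial_t-A_1(t))(\partial_t+A(t))+B(t),\]
where $A(t),A_1(t)$ are smooth families in $OPS^1(\Sigma_t)$ and $B(t)$ is smoothing, and to extract the principal and subprincipal symbols of $A(t)=\mathcal{N}_t$ by symbol calculus, matching against the explicit expression \eqref{lap} for $\Delta$ in Fermi coordinates. Concretely, expanding the right-hand side of the factorization gives
\[\Delta=\partial_t^2+(A-A_1)\partial_t+(\partial_t A)-A_1A+B(t),\]
and matching with $\Delta=\partial_t^2+a(t,x)\partial_t+\Delta_{g_t}$ forces, as operator identities modulo smoothing,
\[A-A_1=a(t,x),\qquad \partial_t A-A_1A\equiv \Delta_{g_t}\pmod{OPS^{-\infty}}.\]
In particular $A$ and $A_1$ have the same principal symbol, and the multiplication operator $a(t,x)$ only influences the order $0$ part.

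At the principal level, write $A\sim\alpha_1+\alpha_0+\cdots$ and $A_1\sim\alpha_1+(\alpha_0-a)+\cdots$. The leading symbol of $-A_1A$ is $-\alpha_1^2$; since the principal symbol of $\Delta_{g_t}$ is $-|\xi|^2_{g_t}=-\langle\xi,\xi\rangle_t$, we obtain $\alpha_1(t,x,\xi)=|\xi|_{g_t(x)}$, which is precisely the principal symbol of $\sqrt{-\Delta_{g_t}}$. This already gives $\mathcal{N}_t-\sqrt{-\Delta_{g_t}}\in OPS^0(\Sigma_t)$, so it makes sense to define $\mathcal{B}_t=\sqrt{-\Delta_{g_t}}-\mathcal{N}_t\in OPS^0(\Sigma_t)$ and to compute its principal symbol.

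For the order $1$ part of $\partial_tA-A_1A\equiv\Delta_{g_t}$, the composition formula
\[\sigma(A_1A)\sim \sigma(A_1)\,\sigma(A)+\tfrac{1}{i}\partial_\xi\sigma(A_1)\cdot\partial_x\sigma(A)+\cdots\]
together with $A_1=A-a$ yields the order $1$ identity
\[\partial_t\alpha_1-2\alpha_1\alpha_0+a\alpha_1-\tfrac{1}{i}\partial_\xi\alpha_1\cdot\partial_x\alpha_1 \ \equiv\ \sigma_{\text{sub}}(\Delta_{g_t}).\]
Since $\Delta_{g_t}$ is a real, self-adjoint second-order operator, its order $1$ symbol is purely the one coming from $\tfrac1i\partial_\xi(-\langle\xi,\xi\rangle_t)\cdot\partial_x(\cdot)$ writing it as $(1/\sqrt{|g_t|})\partial_i(\sqrt{|g_t|}g_t^{ij}\partial_j)$, which can be matched term-by-term using $\alpha_1=\sqrt{\langle\xi,\xi\rangle_t}$. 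Solving for $\alpha_0$, a direct computation expresses $\alpha_0$ in terms of $\partial_t g_{ij}$ at fixed $x$, which is precisely the second fundamental form of $\Sigma_t$: indeed $\tfrac12\partial_t g_{ij}=\text{II}_{ij}$ and the associated shape operator is the Weingarten map $\mathcal{W}_{t,x}$. Tracking the two resulting contributions — one coming from $\partial_t\alpha_1=\tfrac{1}{2\alpha_1}\partial_t\langle\xi,\xi\rangle_t$ (giving the term $\langle\mathcal{W}_{t,x}^*\xi,\xi\rangle_t/\langle\xi,\xi\rangle_t$) and one coming from $a(t,x)=\text{Tr}\,\mathcal{W}_{t,x}$ (via \eqref{aformula}) — yields the stated formula
\[\sigma_t(x,\xi)=\tfrac12\!\left(\text{Tr}\,\mathcal{W}_{t,x}-\frac{\langle\mathcal{W}_{t,x}^*\xi,\xi\rangle_t}{\langle\xi,\xi\rangle_t}\right).\]

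\textbf{Main obstacle.} The conceptual part (factorization and symbol calculus) is essentially routine and follows Taylor \cite[Ch.~12]{TaylorPDE2}; the true computational hurdle is the bookkeeping of the order $1$ terms in the composition $A_1A$ and matching the sign conventions against $\Delta_{g_t}$, and then identifying the resulting coefficients with the geometric quantities $\text{Tr}\,\mathcal{W}_{t,x}$ and $\langle\mathcal{W}_{t,x}^*\xi,\xi\rangle_t$ via $\partial_t g_{ij}=2\,\text{II}_{ij}$. Once this identification is made, the formula for $\sigma_t$ follows.
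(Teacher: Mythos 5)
Your overall approach — factor $\Delta = (\partial_t - A_1)(\partial_t + A) + B$ as in \eqref{lapdec0}, match against \eqref{lap}, and extract $\alpha_1$ and $\alpha_0$ by symbol calculus before identifying the coefficients with the Weingarten map — is exactly the derivation underlying the reference the paper cites for this lemma (Proposition C.1 in Chapter 12 of Taylor \cite{TaylorPDE2}). The paper itself does not re-derive the formula; it simply invokes that proposition and adds the remark that its DtN convention (outward normal) flips the sign relative to Taylor's, so your reconstruction is genuinely what a proof of this statement looks like, and the factorization/matching scheme, the conclusion $\alpha_1 = |\xi|_{g_t}$, and the structure of the order-$1$ equation are all correct.

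That said, there is a concrete sign problem in the step where you identify the geometric quantities, and it would flip the $\operatorname{Tr}\mathcal W_{t,x}$ term in the final formula. You write ``one coming from $a(t,x)=\operatorname{Tr}\mathcal W_{t,x}$ (via \eqref{aformula})'', but combining \eqref{aformula} with the paper's \eqref{derdet}, namely $\partial_t\det(g_{ij})/\det(g_{ij}) = -2\operatorname{Tr}\mathcal W_{t,x}$, gives $a(t,x) = -\operatorname{Tr}\mathcal W_{t,x}$ (check: for the ball, $a = -n/(R-t)$ while $\operatorname{Tr}\mathcal W = n/(R-t)$ by \eqref{symqty2}). Similarly, $\tfrac12\partial_t g_{ij} = \mathrm{II}_{ij}$ as you wrote it is the wrong sign under the paper's convention (inward-normal Weingarten map and $t$ increasing into the interior): the paper's own computation before \eqref{symqty1} gives $\partial_t g_{ij}(v_1,v_2) = -2\langle\mathcal W_{t,x}v_1,v_2\rangle_t$. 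With your signs as written, the $\operatorname{Tr}\mathcal W$ contribution in $\alpha_0$ would come out with the wrong sign and the resulting $\sigma_t$ would not match the lemma; with the corrected signs it does. You flag the sign bookkeeping as the ``main obstacle,'' which is honest, but since the two intermediate identities you do assert are both off by a sign relative to \eqref{derdet} and the paper's own geometric computation, this should be corrected rather than merely acknowledged as a hurdle — and the last step of actually solving for $\alpha_0$ and comparing with the subprincipal symbol of $\sqrt{-\Delta_{g_t}}$ still needs to be carried out to close the argument.
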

	 Here we define DtN operator by using the outward unit normal vector, so there is a difference in the sign compared to the one in \cite[Section C of Chapter 12]{TaylorPDE2}.

%Furthermore, by Sogge's $L^p$ estimate for spherical harmonics, see \cite{Sogge1985}, there is a sequence of $\{e_j\}$ such that
%\[  \|e_j\|_{L^p(M)}  \approx \la_j^{\sigma(n,p)}\|e_j\|_{L^2(M)},\]
%thus 
%\begin{equation}
%	\|u_j\|_{L^p(\Sigma_t)} \approx \la_j^{\sigma(n,p)}e^{-\la_j K(t)}\|e_j\|_{L^2(M)} = \la_j^{\sigma(n,p)}e^{-\la_j (t + R^{-1}t^2+O(t^3))}\|e_j\|_{L^2(M)}. 
%\end{equation}

\section{Exponential lower bounds}

We first prove Theorem \ref{thm1} based on the frequency function in  Almgren \cite{A79} and Garofalo-Lin \cite{GL1986}, and microlocal analysis techniques involving Lemma \ref{symblem}. See also Lin \cite{Lin1991} and Di Cristo-Rondi \cite{DR2019}. And then we show the sharpness of Theorem \ref{thm1} by constructing examples.

\subsection{Proof of Theorem \ref{thm1}}
	Let
\begin{equation}\label{hdn}
	H(t) = \int_{\Sigma_t} u^2,\ \  D(t) = -\int_{\Sigma_t} u \partial_t u,\ \  N(t)=D(t)/H(t).
\end{equation}
%When $t=0$, we have $M=\Sigma_0$ and
%	\[  D(0) = -\int_{\Sigma_0} u \partial_t u =  \int_{M} u \partial_\nu u = N(0) \int_{\Sigma_0} u^2\]
%	where $\partial_\nu$ is the outward normal derivative at the boundary, and $\partial_t$ is inward. 
	Let $(t,x)$  be the Fermi coordinates described before and suppose $0 \le t \le \delta_0 <\delta$.  We can identify each $\Sigma_t$ as $M$ with metric tensor $g_t$, so we can rewrite $H(t),\ D(t)$ as
	\[  H(t) = \int_{\Sigma_t} u^2=\int_{M} u(t,x)^2 d\omega_t(x)  \]
	\[  D(t) = -\int_{\Sigma_t} u \partial_t u = -\int_{M} u(t,x) \partial_t u(t,x) d\omega_t(x) \]
	where $d\omega_t(x) = \sqrt{\det(g_{ij}(t,x))}dx$ in local coordinates. Then
	\begin{equation}
		\begin{aligned}
			H'(t) &= 2\int_{M} u(t,x)\partial_t u(t,x) d\omega_t(x) +  \int_{M} u^2(t,x) \partial_t(d\omega_t(x)).
		\end{aligned}
	\end{equation}
	
	For the volume derivative term $\partial_t(d\omega_t(x))$, As in \cite[(C.26) in Section C of Chapter 12]{TaylorPDE2},  under Fermi coordinate, we have
	\begin{equation}\label{derdet}
		\frac{\partial_t\det(g_{ij}(t,x))}{\det(g_{ij}(t,x))} = -2\text{Tr}\mathcal{W}_{t,x}.
	\end{equation}
	Then we have
	\begin{equation}\label{measureder}
		\begin{aligned}
			\partial_t(d\omega_t(x)) &=\partial_t(\sqrt{\det(g_{ij}(t,x))}dx)\\
			& = \frac{\partial_t(\det(g_{ij}(t,x)))}{2\sqrt{\det(g_{ij}(t,x))}}dx\\
			& = \frac{\partial_t(\det(g_{ij}(t,x)))} {2 \det(g_{ij}(t,x))} d\omega_t(x)\\
			& =-(\text{Tr}\mathcal{W}_{t,x}) d\omega_t(x).
		\end{aligned}
	\end{equation}
	Thus,
	\begin{equation}\label{hder}
		H'(t) = 2\int_{\Sigma_t} u\partial_t u -  \int_{\Sigma_t} (\text{Tr}\mathcal{W}_{t,x}) u^2=-2D(t) -  \int_{\Sigma_t} (\text{Tr}\mathcal{W}_{t,x}) u^2.
	\end{equation}
	Next, we have
	\begin{equation}\label{Dder1}
		D'(t) = -\int_{M} |\partial_t u|^2 d\omega_t(x) - \int_{M} u \partial_t^2 u d\omega_t(x) - \int_{M} u \partial_t u \partial_t(d\omega_t(x)).
	\end{equation}
	By \eqref{lap}, 
	\[0 = \Delta u = \partial_t^2 u(t,x) + a(t,x)\partial_t u(t,x) +\Delta_{g_t} u(t,x)\]
	and we use \eqref{aformula}, \eqref{measureder} to obtain
	\begin{equation}
		\begin{aligned}
			\int_{M} u \partial_t^2 u d\omega_t(x) &= -\int_{M}  u \partial_t u  a(t,x)d\omega_t(x) - \int_{M} u \Delta_{g_t} u  d\omega_t(x)\\
			&= -\int_{M}  u \partial_t u   \frac{\partial_t(\det(g_{ij}(t,x)))} {2 \det(g_{ij}(t,x))}d\omega_t(x) - \int_{M} u \Delta_{g_t} u  d\omega_t(x)\\
			&= -\int_{M} u \partial_t u  \partial_t(d\omega_t(x)) - \int_{M} u \Delta_{g_t} u  d\omega_t(x).
		\end{aligned}
	\end{equation}
	Plugging this into \eqref{Dder1}, we get
	\begin{equation}
		D'(t) = -\int_{\Sigma_t} |\partial_t u|^2 + \int_{\Sigma_t} u \Delta_{g_t} u.
	\end{equation}
	To handle the second term on the right, we use the fact that $\sqrt{-\Delta_{g_t}}$ is self-adjoint and  Lemma \ref{symblem} to obtain
	\begin{equation}
		\begin{aligned}
			\int_{\Sigma_t} u \Delta_{g_t} u &= -\int_{\Sigma_t} |\sqrt{-\Delta_{g_t}} u|^2\\
			&= -\int_{\Sigma_t} | (\mathcal{N}_t+\mathcal{B}_t) u|^2\\
			&= -\int_{\Sigma_t} | \mathcal{N}_t u|^2 -2\int_{\Sigma_t} \mathcal{N}_t u \mathcal{B}_t u - \int_{\Sigma_t} | \mathcal{B}_t u|^2.
		\end{aligned}
	\end{equation}
Since $u$ is harmonic on $\Sigma_t$ and $ \mathcal{N}_t u = \partial_t u$ on $\Sigma_t$, and $\mathcal{N}_t$ is self-adjoint, we have
	\begin{equation}\label{Dder}
		\begin{aligned}
			D'(t) &= -2\int_{\Sigma_t} |\partial_t u|^2 -2\int_{\Sigma_t} \mathcal{N}_t u \mathcal{B}_t u - \int_{\Sigma_t} | \mathcal{B}_t u|^2\\
			&= -2\int_{\Sigma_t} |\partial_t u|^2 -2\int_{\Sigma_t} u \mathcal{N}_t \mathcal{B}_t u - \int_{\Sigma_t} | \mathcal{B}_t u|^2.
		\end{aligned}
	\end{equation}
	Now we are ready to compute the derivative of $N(t)$. Since $\mathcal{B}_t \in OPS^0$, it is bounded on $L^2$ and then $\int_{\Sigma_t} | \mathcal{B}_t u|^2/\int_{\Sigma_t} |u|^2=O(1)$. So by \eqref{hdn}, \eqref{hder} and \eqref{Dder} we have
	\begin{align}\nonumber
		N'(t) &= \frac{D'(t)}{H(t)} - \frac{D(t)H'(t)}{H(t)^2}\\ \nonumber
		&= \frac{-2\int_{\Sigma_t} |\partial_t u|^2-2\int_{\Sigma_t} u \mathcal{N}_t \mathcal{B}_t u}{\int_{\Sigma_t} u^2} + \frac{2(\int_{\Sigma_t} u\partial_t u)^2}{(\int_{\Sigma_t} u^2)^2} + \frac{\int_{\Sigma_t} (\text{Tr}\mathcal{W}_{t,x})u^2}{\int_{\Sigma_t} u^2}N(t)+O(1)\\ \label{Nprime}
		&= \frac{-2\int_{\Sigma_t} |\partial_t u|^2 \int_{\Sigma_t} u^2 + 2(\int_{\Sigma_t} u\partial_t u)^2}{(\int_{\Sigma_t} u^2)^2} - \frac{2\int_{\Sigma_t} u \mathcal{N}_t \mathcal{B}_t u}{\int_{\Sigma_t} u^2} + \frac{\int_{\Sigma_t} (\text{Tr}\mathcal{W}_{t,x}) u^2}{\int_{\Sigma_t} u^2}N(t)+O(1)\\ \nonumber
		&\le  - \frac{2\int_{\Sigma_t} u \mathcal{N}_t \mathcal{B}_t u}{\int_{\Sigma_t} u^2} + \frac{\int_{\Sigma_t} (\text{Tr}\mathcal{W}_{t,x}) u^2}{\int_{\Sigma_t} u^2}N(t)+O(1). \nonumber
	\end{align}
	The last step follows from Cauchy–Schwartz inequality.
	
	For the rest terms, by Lemma \ref{symblem}, the principal symbol  of $-2\mathcal{N}_t \mathcal{B}_t \in OPS^{1}$ is
	\[-2|\xi|_{g_t}\sigma_t(x,\xi) \le |\xi|_{g_t}\tilde{k}_t(x) \le |\xi|_{g_t} \sup_{x\in\Sigma_t}\tilde{k}_t(x).\]
	So the symbol of $(\sup_{x\in\Sigma_t}\tilde{k}_t(x))\mathcal{N}_t + 2\mathcal{N}_t \mathcal{B}_t$ is the sum of  a non-negative principal symbol and a symbol of order 0. Combining with the $L^2$ boundedness of operators of order 0, we can apply the sharp G$\mathring{\text{a}}$rding inequality \cite[Theorem 18.1.14]{hor3} to the principal symbol and obtain
	\[- \frac{2\int_{\Sigma_t} u \mathcal{N}_t \mathcal{B}_t u}{\int_{\Sigma_t} u^2} \le N(t)\sup_{x\in\Sigma_t}\tilde{k}_t(x) + O(1).\]
	It is easy to see
	\begin{equation}
		\frac{\int_{\Sigma_t} (\text{Tr}\mathcal{W}_{t,x}) u^2}{\int_{\Sigma_t} u^2}N(t) \le N(t)\sup_{x\in\Sigma_t}\text{Tr}\mathcal{W}_{t,x}
.	\end{equation}
	We finally have
	\begin{equation}\label{frder}
		N'(t) \le (\sup_{x\in\Sigma_t}\tilde{k}_t(x) + \sup_{x\in\Sigma_t}\text{Tr}\mathcal{W}_{t,x})N(t) + O(1) = \Theta(t)N(t) +O(1).
	\end{equation}
	So we have,
	\begin{equation}
		N(t) \le e^{\int_{0}^{t}\Theta(y) dy}N(0) + O(1).
	\end{equation}
	By \eqref{hder}, we have
	\[(\log H(t))' = \frac{H'(t)}{H(t)} \ge -2N(t) - \sup_{x\in\Sigma_t}\text{Tr}\mathcal{W}_{t,x}\]
	again integrating both sides
	\begin{equation}
		\begin{aligned}
			\log H(t) - \log H(0) &\ge -2\int_{0}^{t}N(s)ds - \int_{0}^{t} \sup_{x\in\Sigma_s}\text{Tr}\mathcal{W}_{s,x}ds\\
			&\ge -2N(0)\int_{0}^{t}e^{\int_{0}^{t}\Theta(y) dy}dt +O(t)\\
			%	&= -2\la K(t) - C_5(\delta_0)
		\end{aligned}
	\end{equation}
	and this gives us the desired bound
	\[  \|u\|_{L^2(\Sigma_t)} \gs e^{-N(0) K(t) }\|u\|_{L^2(M)} .\]

\subsection{Sharpness of Theorem \ref{thm1}}
Now we are going to show the exponential lower bound can be saturated by Euclidean balls and a family of symmetric warped product manifolds.

Let $\Omega = [-R,R] \times M_0$, where $(M_0,g)$ is a connected closed manifold, and $\Omega$ is equipped with the metric $h=\rho(s)^2 g + ds^2$ for some positive $\rho \in C^{\infty}([-R,R])$. First, we shall compute some geometric quantities and the Weingarten map for some special cases. 

Next, we compute Weingarten maps of manifolds $(\Omega,h)$ in the setting as in Proposition \ref{symbound} without the symmetry condition $\rho(s) = \rho(-s)$. For $0 \le t < R$,  $$\Sigma_t=\{-R+t\}\times\M_0\cup \{R-t\}\times M_0 = \Sigma_t^-\cup \Sigma_t^+.$$ We shall only compute it at $\Sigma_t^+$, under this setting $g_t=\rho(R-t)^2g$. 

As in  \cite[(4.68)–(4.70) of Appendix C]{TaylorPDE2}, we consider the one-parameter family of submanifolds given by $$\phi_\tau:\Sigma_t^+\rightarrow\Omega,\  \phi_\tau(R-t,x)=(R-t-\tau,x),\ \forall x\in M_0.$$ Let $\tilde g_\tau$ be the induced metric on the submanifold $\phi_\tau(\Sigma_t^+)$ and $x_+=(R-t,x)$. Then for any tangent vectors $v_1,\ v_2$ on $\Sigma_t^+$ at $x_+$, we have
\[\partial_\tau \tilde g_\tau(v_1,v_2)|_{\tau=0} = -2\langle \partial_t,\mathrm{I\!I}_{x_+}(v_1,v_2)\rangle=-2\langle\mathcal{W}_{t,x_+}v_1,v_2\rangle_t\]
where the second fundamental form $\mathrm{I\!I}_{x_+}(v_1,v_2) = \langle\mathcal{W}_{t,x_+}v_1,v_2\rangle_t\partial _t$, and $\partial_t$ is the inward unit normal vector on $\Sigma_t^+$.

Since $\tilde g_\tau = \rho(R-t-\tau)^2g$, we have
\[
\begin{aligned}
	\partial_\tau \tilde g_\tau(v_1,v_2)|_{\tau=0} 	&= -2\rho'(R-t)\rho(R-t)g(v_1,v_2)\\
	&=-\frac{2\rho'(R-t)}{\rho(R-t)}g_t(v_1,v_2)\\
	&=-\frac{2\rho'(R-t)}{\rho(R-t)}\langle v_1,v_2\rangle_t.
\end{aligned}
\]
Thus, for $x\in \Sigma_t^+$
\begin{equation}
	\mathcal{W}_{t,x} = \frac{\rho'(R-t)}{\rho(R-t)}I
\end{equation}
where $I$ is the identity matrix. Similarly for $x \in \Sigma_t^-$,
\[\mathcal{W}_{t,x} = -\frac{\rho'(-R+t)}{\rho(-R+t)}I.\]
So the principal curvatures $k_1(x),\ \cdots,\ k_n(x)$ are all equal to $ \frac{\rho'(R-t)}{\rho(R-t)}$  on $\Sigma_t^+$, and $-\frac{\rho'(-R+t)}{\rho(-R+t)}$ on $\Sigma_t^-$, and
\[\Theta(t) = \max\Big\{\frac{\rho'(R-t)}{\rho(R-t)},-\frac{\rho'(-R+t)}{\rho(-R+t)}\Big\}.\]
Specially if $\rho$ is symmetric about $0$, then
\begin{equation}\label{symqty1}
	k_1(x)=k_2(x)=...=k_n(x)=\frac{\rho'(R-t)}{\rho(R-t)}, \ \ \ \ \tilde{k}_t(x) = -\frac{(n-1)\rho'(R-t)}{\rho(R-t)},
\end{equation}
and
\begin{equation}\label{symqty2}
	\sigma_t(x,\xi) = \frac{(n-1)\rho'(R-t)}{2\rho(R-t)},\;\ \ \ \text{Tr}\mathcal{W}_{t,x} = \frac{n\rho'(R-t)}{\rho(R-t)},\; \ \ \ \Theta(t)  = \frac{\rho'(R-t)}{\rho(R-t)}
\end{equation}
for all $(x,\xi)\in T^*\Sigma_t$. Thus,
\begin{equation}
	K(t)=\int_0^t \frac{\rho(R)}{\rho(R-s)}ds.
\end{equation}

\begin{example}\label{exBall}
	Euclidean balls.
\end{example}
Let $\Omega = B(0,R)\in \R^{n+1}$ be the ball of radius of $R$ centered at the origin. Let $g,\Delta,\mathcal{N}$ be the induced metric, Laplace operator and DtN operator on $\partial\Omega$ (a sphere of radius $R$).  We have
\[ \mathcal{N} = \sqrt{-\Delta+\frac{(n-1)^2}{4R^2}} - \frac{n-1}{2R},\]
see \cite[Section 4 of Chapter 8]{TaylorPDE2}. Under Fermi coordinates, for $0\le t <\delta=R$ we have
\[  g_t = \frac{(R-t)^2}{R^2}g, \; \Delta_t = \frac{R^2}{(R-t)^2}\Delta,\; \mathcal{N}_t = \frac{R}{R-t}\mathcal{N}.\]
Suppose $\{\mu_k\}_{k=0}^{\infty}$ are the eigenvalues (counting multiplicity) of  $\sqrt{-\Delta}$ and $\{e_k\}_{k=0}^{\infty}$ are the corresponding eigenfunctions. Then
\[ \la_k= \sqrt{\mu_k^2+\frac{(n-1)^2}{4R^2}} - \frac{n-1}{2R},\; \ \ \ u_k = (1-\frac{t}{R})^{R\la_k}e_k,\; k=0,1,2,... \]
are Steklov eigenvalues and eigenfunctions. Thus,
\begin{equation}\label{balldecay}
	\|u_k\|_{L^p(\Sigma_t)} = \frac{(R-t)^{n/p}}{R^{n/p}}(1-\frac{t}{R})^{R\la_k}\|e_k\|_{L^p(M)} \approx e^{-\la_k R\log(R/(R-t))}\|e_k\|_{L^p(M)}. 
\end{equation}
In this case, the metric has the form of $ds^2 + \rho(s)^2g$ under polar coordinate with $\rho(s) = \frac{s}{R}$ when $0 < s \le R$, and the boundary has only one component. Thus
\[  \Theta(t) = \frac{\rho'(R-t)}{\rho(R-t)} = \frac{1}{R-t} \]
and 
\[  K(t) = \int_{0}^{t} e^{\int_{0}^{s}(R-\tau)^{-1}d\tau}ds =   R\log(\frac{R}{R-t}), \]
which is exactly the same as the coefficient of $-\la_k$ in \eqref{balldecay}.

\begin{prop}\label{symbound}
Let $\Omega = [-R,R] \times M_0$, where $(M_0,g)$ is a connected closed manifold, and $\Omega$ is equipped with the metric $h=\rho(s)^2 g + ds^2$ for some positive $\rho \in C^{\infty}([-R,R])$ with $\rho(-s) = \rho(s)$. Then for any $\delta_0 < \delta=R$, there is a constant $C>0$ depends on $M_0$, $\rho$ and $\delta_0$, such that, for each Steklov eigenvalue $\lambda$, there exists a Steklov eigenfunction $u_\lambda$ satisfying
	\begin{equation}\label{symest}
		C^{-1}e^{-\la K(t)}\|u_\la\|_{L^p(M)} \le \|u_\la\|_{L^p(\Sigma_t)} \le Ce^{-\la K(t)}\|u_\la\|_{L^p(M)},
	\end{equation}
	for any $1 \le p \le \infty$.
\end{prop}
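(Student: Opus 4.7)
The plan is to exploit the warped--product structure by separating variables and then to pin down the sharp exponential decay rate by a Liouville--Green (WKB) analysis of the resulting ODE.

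\emph{Separation of variables.} Let $\{e_k\}$ be an $L^2$-orthonormal basis of $L^2(M_0,g)$ consisting of eigenfunctions of $\sqrt{-\Delta_g}$ with eigenvalues $\mu_k\ge 0$. Writing the Laplacian of $\Omega$ in the warped--product coordinates and plugging in $u(s,x)=\phi(s)e_k(x)$, the equation $\Delta u=0$ reduces to
\[\phi''(s)+\frac{n\rho'(s)}{\rho(s)}\phi'(s)-\frac{\mu_k^2}{\rho(s)^2}\phi(s)=0,\qquad s\in[-R,R].\]
By $\rho(-s)=\rho(s)$ the solution space splits into even and odd parts $\phi_\pm$, each giving a Steklov eigenfunction $u_k^\pm(s,x)=\phi_\pm(s)e_k(x)$ with eigenvalue $\lambda_k^\pm=\phi_\pm'(R)/\phi_\pm(R)$ (the outward normal derivatives at $\pm R$ match the boundary values because $\phi_\pm'$ has opposite parity to $\phi_\pm$). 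Together, as $(k,\pm)$ varies, these exhaust the Steklov spectrum, so given any Steklov eigenvalue $\lambda$ we may take $u_\lambda=u_k^\pm$ for the appropriate indices.

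\emph{Reduction of the $L^p$ comparison to the ODE.} Since the metric induced on $\Sigma_t^+=\{R-t\}\times M_0$ is $\rho(R-t)^2 g$, a direct computation gives
\[\|u_k^\pm\|_{L^p(\Sigma_t^+)}=|\phi_\pm(R-t)|\,\rho(R-t)^{n/p}\,\|e_k\|_{L^p(M_0,g)},\]
and symmetry yields the same on $\Sigma_t^-$, so
\[\frac{\|u_k^\pm\|_{L^p(\Sigma_t)}}{\|u_k^\pm\|_{L^p(M)}} = \frac{|\phi_\pm(R-t)|}{|\phi_\pm(R)|}\Bigl(\frac{\rho(R-t)}{\rho(R)}\Bigr)^{n/p}.\]
On $t\in[0,\delta_0]$ the second factor is pinched between positive constants depending only on $(\rho,\delta_0)$, so the proof reduces to showing $|\phi_\pm(R-t)/\phi_\pm(R)|\approx e^{-\lambda_k^\pm K(t)}$ uniformly in $k$.

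\emph{WKB analysis of the ODE.} The Liouville substitution $\phi=\rho^{-n/2}\psi$ puts the ODE into Schr\"odinger form
\[\psi''(s)=\bigl(\mu_k^2/\rho(s)^2+V(s)\bigr)\psi(s),\]
with $V\in C^\infty([-R,R])$ bounded. For $\mu_k$ large, classical Liouville--Green asymptotics---or equivalently a Riccati/Gronwall argument on $f=\psi'/\psi$ compared with the formal WKB solution $-\mu_k/\rho(s)$---yield, for the solution decaying inward from $s=R$,
\[\psi(R-t)/\psi(R)=\bigl(1+O(\mu_k^{-1})\bigr)\exp\Bigl(-\mu_k\int_{R-t}^R \rho(s)^{-1}\,ds\Bigr),\qquad t\in[0,\delta_0],\]
together with $\lambda_k^\pm=\mu_k/\rho(R)+O(1)$ as $\mu_k\to\infty$. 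A change of variables in the definition \eqref{Kt} combined with \eqref{symqty2} gives $\int_{R-t}^R \rho^{-1}\,ds = \rho(R)^{-1}K(t)$, so
\[|\phi_\pm(R-t)/\phi_\pm(R)|\approx e^{-\mu_k K(t)/\rho(R)}\approx e^{-\lambda_k^\pm K(t)},\]
where the $O(1)$ error in $\lambda_k^\pm$ only contributes a bounded multiplicative factor since $K(t)=O(1)$ on $[0,\delta_0]$, and the $\rho^{-n/2}$ factors are absorbed into the bounded prefactor of the previous step. This proves \eqref{symest} for $\mu_k$ large.

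The main obstacle is the uniformity of the WKB asymptotics in $\mu_k$; this is standard Liouville--Green theory once one observes that $V$ is a bounded perturbation and $\mu_k^2/\rho(s)^2$ is uniformly bounded below by $\mu_k^2/\|\rho\|_\infty^2$. The finitely many small $\mu_k$---in particular the zero mode, whose even solution is constant and gives the trivial eigenvalue $\lambda_0^+=0$---are handled separately by direct inspection: each $\phi_\pm$ is then a fixed smooth positive function on $[R-\delta_0,R]$, so both sides of \eqref{symest} are comparable to $\|u_k^\pm\|_{L^p(M)}$ with constants depending only on $(\rho,M_0,\delta_0)$, which suffices because $e^{-\lambda K(t)}$ is then bounded between two positive constants as well.
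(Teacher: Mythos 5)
Your proposal is correct, but it follows a genuinely different route from the paper's. Both arguments begin with separation of variables $u_\lambda=b(s)e_\mu(x)$ and reduce the $L^p$ ratio to a $1$-dimensional quantity (you reduce to the ODE ratio $|\phi(R-t)/\phi(R)|$; the paper reduces to $p=2$ via the same factorization of the $L^p$ norms). From there the paper runs the Almgren--Garofalo--Lin frequency function machinery a second time, noting that in the symmetric warped-product case the Cauchy--Schwarz defect $R_1$ vanishes identically and the symbolic terms $R_2,R_3$ can be computed exactly from \eqref{symqty2}, yielding the two-sided identity $N'(t)=\Theta(t)N(t)+O(1)$ rather than merely an inequality; integrating this gives both directions of \eqref{symest} at once. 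You instead pass to the Schr\"odinger normal form and invoke Liouville--Green/WKB asymptotics, identifying $\int_{R-t}^R\rho^{-1}=K(t)/\rho(R)$ and $\lambda_k^\pm=\mu_k/\rho(R)+O(1)$. Your route makes the sharp constant in the exponent visibly forced by the ODE and is more transparent about \emph{why} the bound is two-sided; the paper's route stays entirely inside the frequency-function framework of Theorem \ref{thm1} and avoids any asymptotic ODE analysis.

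Two small points you should tighten if you flesh this out. First, the claim that $\phi_\pm(R-t)/\phi_\pm(R)$ tracks the growing WKB branch requires knowing the coefficient of the growing solution is not degenerate; the cleanest way is to note that multiplying the ODE by $\rho^n$ gives $(\rho^n\phi')'=\mu_k^2\rho^{n-2}\phi$, whence $\phi_\pm>0$ and $\phi_\pm'>0$ on $(0,R]$, so the Riccati variable $\phi'/\phi$ is positive and (after an $O(1/\mu_k)$ transient) locks onto the $+\sqrt{Q}$ branch. Second, for the finitely many small-$\mu_k$ modes you appeal to $\phi_\pm$ being ``positive'' on $[R-\delta_0,R]$ — this is true, but it needs the same monotonicity observation (otherwise a zero of $\phi_\pm$ inside $[R-\delta_0,R]$ would break the two-sided bound); once positivity is established, the ``direct inspection'' step is fine.
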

The upper bound in \eqref{symest} was also obtained in \cite[Theorem 1.5]{DHN21} in a slightly different form. Here we give a direct proof.

\begin{proof}[Proof of Proposition \ref{symbound}]
	By the symmetry of $\rho$, there is a Steklov eigenfunction of the form $$u_\la = b(s)e_\mu(x)$$ associated with the Steklov eigenvalue $\la$, where $b(s)$ is symmetric or anti-symmetric about $0$ and $e_\mu(x)$ is an $L^2$ normalized eigenfunction on $M_0$.
	
	We just need to handle $p=2$, since it follows from  $u_\la = b(s)e_\mu(x)$ that $$\frac{\|u_\la\|_{L^p(\Sigma_t)}}{\|u_\la\|_{L^p(M)}}=\frac{\|u_\la\|_{L^2(\Sigma_t)}}{\|u_\la\|_{L^2(M)}}.$$
	The argument is similar to the proof of Theorem \ref{thm1}. By \eqref{Nprime}, we have
	\begin{equation}\label{Np2}
		N'(t)=R_1(t)+R_2(t)+R_3(t)+O(1),
	\end{equation}where
	\[R_1 = \frac{-2\int_{\Sigma_t} |\partial_t u_\la|^2 \int_{\Sigma_t} u_\la^2 + 2(\int_{\Sigma_t} u_\la\partial_t u_\la)^2}{(\int_{\Sigma_t} u_\la^2)^2},\]
	\[ R_2 = - \frac{2\int_{\Sigma_t} u_\la \mathcal{N}_t \mathcal{B}_t u_\la}{\int_{\Sigma_t} u_\la^2},\;\ \ \ \ \ 
	R_3 = \frac{\int_{\Sigma_t} (\text{Tr}\mathcal{W}_{t,x}) u_\la^2}{\int_{\Sigma_t} u_\la^2}N(t). \]
	We notice $R_1 = 0$, by the symmetry of $b(s)$, and the fact that $u_\la = b(s)e_\mu(x)$.
	
By \eqref{symqty2}, the principal symbol of $\mathcal{B}_t$ is
	\[\sigma_t(x,\xi) = \frac{(n-1)\rho'(R-t)}{2\rho(R-t)} = -\frac{1}{2}\tilde{k}_t(x),\]
	which is constant for all $x,\xi$ on $T^*\Sigma_t$. So $2\mathcal{N}_t \mathcal{B}_t+\tilde{k}_t\mathcal{N}_t \in OPS^0$, and we can write
	\[R_2 =  -\frac{(n-1)\rho'(R-t)}{\rho(R-t)}N(t) + O(1).\]
	Again by \eqref{symqty2}, we  have $$R_3 = \frac{n\rho'(R-t)}{\rho(R-t)}N(t).$$ Thus, by \eqref{Np2} and \eqref{symqty2} we get
	\[\begin{aligned}
		N'(t)
		&= \frac{\rho'(R-t)}{\rho(R-t)}N(t)+O(1)\\
		&=  \Theta(t)N(t) + O(1).
	\end{aligned}\]
	Then
	\begin{equation}
		N(t) = e^{\int_{0}^{t}\Theta(y) dy}N(0) +O(1).
	\end{equation}
	By \eqref{hder}, we have
	\[(\log H(t))' = \frac{H'(t)}{H(t)} = -2N(t) - O(1),\]
	which gives
	\begin{equation}
		\begin{aligned}
			\log H(t) - \log H(0) &= -2N(0)\int_{0}^{t}e^{\int_{0}^{s}\Theta(y) dy}ds + O(1).
		\end{aligned}
	\end{equation}
This implies the desired bound
	\[  \|u_\la\|_{L^2(\Sigma_t)} ^2\approx e^{-2N(0) K(t) }\|u_\la\|_{L^2(M)} ^2=e^{-2 \la K(t) }\|u_\la\|_{L^2(M)} ^2.\]
\end{proof}

Next, we apply Proposition \ref{symbound} to construct some examples with various decaying speeds, compared to original conjetured decaying speed $e^{-\la t}$ by Hislop-Lutzer \cite{HL2001}. These phenomena have been observed by Galkowski-Toth \cite{GT2019, GT2021}.

%		By Taylor's expansion, we can see if $f$ continuous and $t$ small, we have
%		\[I(f)(t) \approx t + \frac{f(0)}{2}t^2 + o(t^2)\]

\begin{example}\label{exTorus}
	Quasi-cylinder with convex boundary.
\end{example}
Let $M_0=\mathbb{R}^n/(2\pi\mathbb{Z}^n)=\mathbb{T}^n$ be the standard flat torus, and $\Omega= [-1,1]\times M_0$ with the metric $h= ds^2 + (1+s^2)dx^2$. Then
\[ \Theta(t) =\frac{\rho'(R-t)}{\rho(R-t)} = \frac{2(1-t)}{1+(1-t)^2}.\]
By Proposition \ref{symbound}, for each Steklov eigenvalue $\la$, we can find some $ \xi \in \mathbb{Z}^n$ such that
 $u_\la = b(s)e^{i\xi\cdot x}$. Then by \eqref{symest}, we get
\[ \|u_\la\|_{L^p(\Sigma_t)} \approx e^{-\la (t+t^2+O(t^3))}\|u_\la\|_{L^p(M)}, \]
which decays faster than $e^{-\la t}\|u_\la\|_{L^p(M)}$.

\begin{example}
	Quasi-cylinder with concave boundary.
\end{example}
Let $M_0=\mathbb{S}^n$ be the standard sphere and $\Omega= [-\frac{\pi}3,\frac{\pi}3]\times M_0$ with the metric $h= ds^2 + (\cos s)dx^2$. Then
\[ \Theta(t) =-\tan(\frac{\pi}{3}-s).\]
By Proposition \ref{symbound} for each Steklov eigenvalue $\la$, we can find a Steklov eigenfunction $u_\la = b(s)e_\mu(x)$ such that
\[ \|u_\la\|_{L^p(\Sigma_t)} \approx  e^{-\la (t-\frac{\sqrt{3}}{2}t^2+O(t^3))}\|u_\la\|_{L^p(M)}, \]
which decays slower than $e^{-\la t}\|u_\la\|_{L^p(M)}$.

		\section{Parametrix for the Poisson integral operator}
	As in Taylor \cite[Section 12 of Chapter 7]{TaylorPDE2}, we can construct a parametrix for the Poisson integral operator $u=\mathcal{H}f$ near the boundary for the boundary value problem
			\begin{equation}\label{hmbdvalue}
			\begin{cases}
				\Delta u(t,x)=0,\ \ (t,x)\in \Omega,\\
				u(0,x)=f(x),\ \ \ x\in \partial\Omega.
			\end{cases}
		\end{equation}
		See also Boutet de Monvel \cite{M71}, Lee-Uhlmann \cite{LU89}.
		
		Let $\mathcal{N}$ be the DtN operator on $M=\partial\Omega$. As in Section 2,  the Laplacian $\Delta$ on $\Omega$ in local coordinates with respect to the boundary has the form
		\begin{equation}\label{lapdec}\Delta=(\partial_t-A_1(t))(\partial_t+A(t)) +B(t)\end{equation}
		where the smooth families $A(t), A_1(t)\in OPS^1(M)$ and $B(t)\in OPS^{-\infty}(M)$. Moreover, $A(0)=\mathcal{N}$ modulo a smoothing operator, and $A(t)$ has the symbol $a(t,x,\xi)\in S^1$ with principal symbol $|\xi|_{g_t(x)}$. We have
		\[|\partial_t^k\partial_x^\beta\partial_\xi^\alpha a(t,x,\xi)|\le C_{\alpha,\beta,k} (1+|\xi|)^{1-|\alpha|},\ \ \forall \alpha,\beta,k.\]
		
		\begin{lemma}[Parametrix for the Poisson integral operator]\label{parametrix}
		
			For any $0<\delta_0 < \delta$ and $t\in [0,\delta_0]$,  under Fermi coordinates, there exist a parametrix $P_t$ locally given by
			\begin{equation}\label{paraformula}
				P_tf(x) = \frac{1}{(2\pi)^n}\int e^{ix\cdot \xi}b(t,x,\xi)\hat{f}(\xi)d\xi, \ b \in \mathcal{P}^0_e,			\end{equation}
			and a smooth family of smoothing operators $R_t$ such that the Poisson integral operator
			\begin{equation}
				\mathcal{H}f(t,x) = P_tf(x)+R_tf(x).
			\end{equation}
			Moreover, the symbol
			\[b(t,x,\xi)\sim b_0(t,x,\xi)+\sum_{j\ge1}\sum_{2\le k\le 2j}b_0(t,x,\xi)t^k q_{j,k}(t,x,\xi)\]
			with $q_{j,k}(t)$ are smooth families in $S^{k-j}$ and $b_0(t,x,\xi)=e^{-\int_0^t a(s,x,\xi)ds}\in \mathcal{P}_e^0$. 
		\end{lemma}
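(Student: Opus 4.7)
The strategy is to build $P_t$ so that $(\partial_t+A(t))P_t$ is a smooth family of smoothing operators on $M$ and $P_0=I$. Combined with the factorization \eqref{lapdec},
\[\Delta P_t=(\partial_t-A_1(t))\circ(\partial_t+A(t))P_t+B(t)P_t\]
will then be a smooth family of smoothing operators, since $A_1(t)\in OPS^1$ and $B(t)\in OPS^{-\infty}$. The difference $R_t:=\mathcal H-P_t$ therefore satisfies a Dirichlet problem with smoothing right-hand side and vanishing trace, and elliptic regularity for the Poisson problem forces $R_t$ itself to be a smooth family of smoothing operators.

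\textbf{Transport equations and $b_0$.} Inserting the ansatz \eqref{paraformula} into $(\partial_t+A(t))P_t$ and invoking the standard asymptotic composition formula for $A(t)\circ\mathrm{Op}(b(t,\cdot,\cdot))$ reduces the requirement $(\partial_t+A(t))P_t\sim 0$ to
\[\partial_tb+\sum_{\alpha}\frac{(-i)^{|\alpha|}}{\alpha!}\,\partial_\xi^\alpha a(t,x,\xi)\,\partial_x^\alpha b(t,x,\xi)\sim 0,\qquad b(0,x,\xi)=1.\]
Expand $b\sim\sum_{j\ge0}b_j$ with $b_j$ of effective order $-j$ in the $\mathcal P^0_e$ calculus (where each factor of $t$ is worth one factor of $\langle\xi\rangle^{-1}$ modulo the exponential weight). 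The leading equation $\partial_tb_0+a\,b_0=0$ with $b_0|_{t=0}=1$ is solved uniquely by $b_0=\exp\bigl(-\int_0^t a(s,x,\xi)\,ds\bigr)$. Since $a$ has principal symbol $|\xi|_{g_t(x)}\gs|\xi|$, we get $|b_0|\le e^{-ct\langle\xi\rangle}$ with matching control on all derivatives, confirming $b_0\in\mathcal P^0_e$. For $j\ge1$, $b_j$ solves the Duhamel equation $\partial_tb_j+a\,b_j=R_j$ with $b_j|_{t=0}=0$, where $R_j$ collects all composition terms of order $-j$, an explicit finite sum of products $(\partial_\xi^\alpha a)(\partial_x^\alpha b_k)$ with $k<j$. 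Using $b_0^{-1}$ as integrating factor gives the closed formula $b_j(t)=b_0(t)\int_0^tb_0(s)^{-1}R_j(s)\,ds$.

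\textbf{Structural induction and bookkeeping.} The claim $b_j=b_0\sum_{k=2}^{2j}t^kq_{j,k}$ with $q_{j,k}\in S^{k-j}$ is proved by induction on $j$, the key identities being
\[\partial_{x_\ell}b_0=-b_0\int_0^t\partial_{x_\ell}a\,ds,\qquad\partial_{\xi_\ell}b_0=-b_0\int_0^t\partial_{\xi_\ell}a\,ds,\]
which extract a factor of $t$ for each derivative that lands on $b_0$. The base case $j=1$ uses $R_1=-i\sum_\ell(\partial_{\xi_\ell}a)(\partial_{x_\ell}b_0)$, where one $x$-derivative of $b_0$ produces a factor $t$ and the outer Duhamel integration in $s$ produces another, giving $b_1/b_0=t^2q_{1,2}$ with $q_{1,2}\in S^1$ since $(\partial_\xi a)\cdot(\partial_x a)\in S^0\cdot S^1$. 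The inductive step expands $\partial_x^\alpha(b_0\cdot t^\ell q_{k,\ell})$ via Leibniz, distinguishing whether each derivative lands on $b_0$ (raising the $t$-degree by one) or on $q_{k,\ell}$ (preserving it): combined with the outer $ds$-integration, the $t$-exponent increases by at most two per recursion step, yielding $k\le 2j$, while the accompanying symbol order is tracked as $k-j$ by counting how many $\partial_\xi$ lower the $a$-factors versus how many $\partial_x$ raise the integrals. The main technical obstacle is precisely this bookkeeping: keeping $2\le k\le 2j$ and the orders of $q_{j,k}$ aligned through each iteration requires care since each $R_j$ produces many Leibniz terms. Once the individual $b_j$ are in hand, a standard Borel summation produces $b\in\mathcal P^0_e$ with the stated asymptotic expansion and $b(0,x,\xi)=1$ exactly, since $b_j|_{t=0}=0$ for $j\ge1$.

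\textbf{Conclusion.} By construction $(\partial_t+A(t))P_t$ lies in a smooth family in $OPS^{-\infty}$, and then by \eqref{lapdec} so does $\Delta P_t$. Setting $R_t:=\mathcal H-P_t$, we have $R_tf|_{t=0}=0$ and $\Delta R_tf\in C^\infty$ smoothly in $f$, and standard elliptic regularity for the Dirichlet Poisson problem on $\Omega$ concludes that $R_t$ is a smooth family of smoothing operators, as required.
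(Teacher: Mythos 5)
Your proposal follows essentially the same route as the paper: factor $\Delta=(\partial_t-A_1(t))(\partial_t+A(t))+B(t)$, reduce to the first-order evolution equation $\partial_tu+A(t)u=0$, solve the symbol transport hierarchy with $b_0=\exp(-\int_0^ta(s,x,\xi)\,ds)$ and the Duhamel recursion $b_j(t)=b_0(t)\int_0^tb_0(s)^{-1}r_j(s)\,ds$, and prove the structural claim $b_j=b_0\sum_{2\le k\le2j}t^kq_{j,k}$ by induction with the bookkeeping that each derivative on $b_0$ and each Duhamel integration extracts a factor of $t$. Your closing paragraph is slightly more explicit than the paper about why $R_t=\mathcal H-P_t$ is smoothing (vanishing trace plus smoothing $\Delta R_t$ plus elliptic regularity), but this is unpacking the same construction rather than a different argument, so the two proofs agree in substance.
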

		The symbol class $\mathcal{P}_e^0$ is defined in Section 2. 
		\begin{proof}
		Let $\mathbb{B}_r=\{x\in\Rn:|x|<r\}$. Let $\{(U_j,B_j,\kappa_j)\}_{j=1}^K$ be a collection of open sets on $M$. Each $(U_j,\kappa_j)$ is a local coordinate chart,  $\kappa_j(B_j) = \mathbb{B}_1$ and $\mathbb{B}_3\subset \kappa_j(U_j)$. We may assume that there is a partition of unity $\{\phi^0_j\}_{j=1}^K$ adapted to $\{B_j\}_{j=1}^K$. Let $\phi\in C_0^{\infty}(\Rn)$ so that $\phi(x) = 1$ when $|x|<2$ and $\phi(x) = 0$ when $|x|>3$, denote $\phi^1_j=\kappa_j^{-1}(\mathbb{B}_3)$. For each $j$, we construct a parametrix $P_j(t)$ in the form of \eqref{paraformula} in $U_j$, then
			$$P_t = \sum_{j=1}^{K} \phi^1_jP_j(t)\phi^0_j$$
			is a parametrix for the Poisson integral operator $\mathcal{H}$ near the boundary.
			So we will only work in local coordinates on $M$. As in \cite[Section 12 of Chapter 7]{TaylorPDE2},  we only need to construct a parametrix $P_t$ for the following evolution equation
			\[\partial_t u + A(t)u = 0,\ u(0)=f,\]
	since $P_t$ also gives a parametrix for the Poisson integral operator $\mathcal{H}$ by \eqref{lapdec}.
			
			We build the symbol of $P_t$ by the series $b(t,x,\xi)\sim \sum_{j\ge0}b_j(t,x,\xi)$, which should satisfy the equation
			\[\partial_t b+\sum_{\alpha}\frac1{\alpha!}D_\xi^\alpha a\partial_x^\alpha b=0,\ \ b(0,x,\xi)=1.\]
			To determine each term of the series, we require that
			\begin{equation}\label{b0}\partial_t b_0+ab_0=0,\ \ b(0,x,\xi)=1,\end{equation}
			and for $j\ge1$,
			\begin{equation}\label{bj}\partial_t b_j+\sum_{\ell=0}^j\sum_{|\alpha|=j-\ell}\frac1{\alpha!}D_\xi^\alpha a\partial_x^\alpha b_\ell=0,\ \ b_j(0,x,\xi)=0.\end{equation}
			Solving \eqref{b0} gives
			\begin{equation}\label{b0eq}
				b_0(t,x,\xi)=e^{-\int_0^ta(s,x,\xi)ds}\in \mathcal{P}_e^0,
			\end{equation}
			since $a(s)$ is a smooth family in $S^1$.
			To solve \eqref{bj}, we let 
			\[r_j(t,x,\xi)=-\sum_{\ell=0}^{j-1}\sum_{|\alpha|=j-\ell}\frac1{\alpha!}D_\xi^\alpha a\partial_x^\alpha b_\ell.\]
			Then \eqref{bj} is equivalent to
			\[\partial_t b_j+ab_j=r_j,\ \ b_j(0,x,\xi)=0\]
			which gives the recursive formula
			\begin{align*}
				b_j(t,x,\xi)&=b_0(t,x,\xi)\int_0^t r_j(s,x,\xi)b_0(s,x,\xi)^{-1}ds\\
				&=-b_0(t,x,\xi)\sum_{\ell=0}^{j-1}\sum_{|\alpha|=j-\ell}\frac1{\alpha!}\int_0^t D_\xi^\alpha a(s,x,\xi)\partial_x^\alpha  b_\ell(s,x,\xi) \cdot b_0(s,x,\xi)^{-1}ds.
			\end{align*}
			We claim that for $j\ge1$ 
			\begin{equation}\label{bjeq}
				b_j(t,x,\xi)=b_0(t,x,\xi)\sum_{2\le k\le 2j}t^kq_{j,k}(t,x,\xi),
			\end{equation}
			where $q_{j,k}(t)$ are smooth families in $S^{k-j}$. Then we have  $b_j\in \mathcal{P}_e^{-j}$ by \eqref{b0eq}. So we can build a symbol $b\sim \sum b_j\in \mathcal{P}_e^0$ (see e.g.  \cite[Lemma 3.1.2]{SoggeFIO}). Now we prove the claim \eqref{bjeq} by induction. For $j=1$, 
			\begin{align*}
				b_1(t,x,\xi)&=-b_0(t,x,\xi)\sum_{|\alpha|=1}\int_0^tD_\xi^\alpha a(s,x,\xi)\partial_x^\alpha b_0(s,x,\xi) \cdot b_0(s,x,\xi)^{-1}ds\\
				&=-b_0(t,x,\xi)\sum_{|\alpha|=1}\int_0^tD_\xi^\alpha a(s,x,\xi)\int_0^s\partial_x^\alpha a(\tau,x,\xi)d\tau ds\\
				&=b_0(t,x,\xi)t^2q_{1,2}(t,x,\xi)
			\end{align*}
			where $q_{1,2}(t)$ is a smooth family in $S^1$. Suppose \eqref{bjeq} holds for $b_\ell$ with $1\le \ell<j$. Then for $|\alpha|\ge1$
			\begin{align*}
				\partial_x^\alpha b_0(t,x,\xi)=b_0(t,x,\xi)\sum_{m=1}^{|\alpha|}t^mq_m(t,x,\xi)
			\end{align*}
			and for $1\le \ell<j$ and $|\alpha|\ge0$
			\begin{align*}
				\partial_x^\alpha b_\ell(t,x,\xi)&=\sum_{2\le k\le 2\ell}\sum_{|\alpha_1|+|\alpha_2|=|\alpha|}t^k\partial_x^{\alpha_1}b_0(t,x,\xi)\partial_x^{\alpha_2}q_{\ell,k}(t,x,\xi)\\
				&=b_0(t,x,\xi)\sum_{2\le k\le 2\ell +|\alpha|}t^k\tilde q_{k-\ell}(t,x,\xi)
			\end{align*}
			where $q_m(t),\ \tilde q_m(t)$ are smooth families in $S^m$. 
			Thus, we have
			\begin{align*}
				b_j(t,x,\xi)&=-b_0(t,x,\xi)\sum_{\ell=0}^{j-1}\sum_{|\alpha|=j-\ell}\frac1{\alpha!}\int_0^t D_\xi^\alpha a(s,x,\xi)\partial_x^\alpha  b_\ell(s,x,\xi) \cdot b_0(s,x,\xi)^{-1}ds\\
				&=-b_0(t,x,\xi)\sum_{|\alpha|=j}\sum_{m=1}^{j}\frac1{\alpha!}\int_0^t D_\xi^\alpha a(s,x,\xi) s^mq_{m}(s,x,\xi)ds\\
				&\quad\quad-b_0(t,x,\xi)\sum_{\ell=1}^{j-1}\sum_{|\alpha|=j-\ell}\sum_{2\le k\le j+\ell}\frac1{\alpha!}\int_0^t D_\xi^\alpha a(s,x,\xi) s^k\tilde q_{k-\ell}(s,x,\xi)ds\\
				&=b_0(t,x,\xi)\sum_{2\le k\le 2j}t^kq_{j,k}(t,x,\xi).
			\end{align*}
			where $q_{j,k}(t)$ are smooth families in $S^{k-j}$. So the claim \eqref{bjeq} is proved.
		\end{proof}
\subsection{Kernel of the composition operator}
		
		In the following, let $P\in OPS^1(M)$ be classical and self-adjoint with principal symbol $p(x,\xi)=|\xi|_{g(x)}$.  Let $\chi\in \mathcal{S}(\mathbb{R})$ satisfy $\supp \hat \chi\subset (\frac12,1)$ and $\chi(0)=1$. By \cite[Lemma 5.1.3]{SoggeFIO}, we can write the kernel of the spectral cluster as
	\[\chi(P-\la)(x,y)=\la^{\frac{n-1}2}e^{i\la d_g(x,y)}a_\la(x,y)+r_\la(x,y) \]
	where $a_\la(x,y)$ is supported in $\{d_g(x,y)\approx 1\}$ and $|\partial_{x,y}^\alpha a_\la(x,y)|\le C_\alpha,\ \forall \alpha.$
	And the remainder term is smooth and satisfies
	\[|\partial_{x,y}^\alpha r_\la(x,y)|\le C_\alpha\la^{-N},\ \ \forall N,\alpha.\]
 Recall $g_t$ is the metric on $\Sigma_t$ under Fermi coordinates and $g=g_0$. Let 
	\begin{equation}\label{rg}r(t)=\inf_{x\in M,\ \xi\ne0}\frac{|\xi|_{g_t(x)}}{|\xi|_{g(x)}},\ \quad G(t)=\int_0^tr(s)ds.\end{equation}
	We have $r(t)=1+O(t)$ and $G(t)=t+O(t^2)$.
	One can check that $G(t)=K(t)$ if $\Omega$ is a Euclidean ball  or the warped product manifold in Proposition \ref{symbound}. Now we can calculate the kernel of $\mathcal{H}\circ\chi(P-\la)$, i.e. the composition of the Poisson integral operator with the spectral cluster,  by applying the parametrix in Lemma \ref{parametrix}.
	\begin{lemma}\label{intkernel}
	Fix $0< \delta_0<\delta$. For any $0\le t\le \delta_0$, we have	\begin{equation}
			\mathcal{H}(\chi(P-\la)f)(t,x)=\la^{\frac{n-1}2}e^{-c\la G(t)}\int e^{i\la d_g(x,y)}a_{\la}(t,x,y)f(y)dz+  \int r_{\la}(t,x,y) f(y)dy,
		\end{equation}
		where $a_{\la}(t,x,y)$ is supported in $\{d_g(x,y)\approx 1\}$ and $|\partial_t^k\partial_{x,y}^\alpha a_{\la}(t,x,y)|\le C_{\alpha,k}\la^k,\ \forall \alpha,k.$
		And the remainder term has a  smooth kernel satisfying
		\[|\partial_t^k\partial_{x,y}^\alpha r_{\la}(t,x,y)|\le C_{\alpha,k}\la^{-N},\ \ \forall N,\alpha,k.\]
		The positive constants $c, C_{\alpha,k}$ are independent of $\la$.
	\end{lemma}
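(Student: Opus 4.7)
The plan is to combine the parametrix from Lemma \ref{parametrix} with Sogge's description of the spectral cluster kernel, and then apply the standard symbol calculus for a pseudo-differential operator acting on a Lagrangian distribution.

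First I would decompose $\mathcal{H}f(t,x) = P_t f(x) + R_t f(x)$ via Lemma \ref{parametrix} and compose with $\chi(P-\la)$. The smoothing piece $R_t\circ \chi(P-\la)$ inherits the rapid decay $O(\la^{-N})$ and any derivatives, since $R_t$ is smoothing and kernels of $\chi(P-\la)$ are uniformly bounded in $\la$; after a partition of unity this contribution is absorbed into the $r_\la$ term of the statement. The main term is $P_t\circ \chi(P-\la)$. Using Sogge's formula quoted just before the lemma, write
\begin{equation*}
	P_t\bigl(\chi(P-\la)f\bigr)(x)=\la^{\frac{n-1}{2}}\int P_t\bigl[\,e^{i\la d_g(\cdot,y)}a_\la(\cdot,y)\,\bigr](x)\,f(y)\,dy+\text{(smoothing)}.
\end{equation*}

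Next I would apply the classical expansion for a pdo $b(t,x,D)$ in $\mathcal{P}^0_e$ acting on an oscillatory function of the form $e^{i\la \phi(\cdot,y)}a_\la(\cdot,y)$ with smooth phase $\phi(z,y)=d_g(z,y)$ (well-defined on the support $d_g\approx 1$). Through the standard Kuranishi/stationary-phase argument, one obtains
\begin{equation*}
	P_t\bigl[\,e^{i\la d_g(\cdot,y)}a_\la(\cdot,y)\,\bigr](x)=e^{i\la d_g(x,y)}\,b\bigl(t,x,\la\nabla_x d_g(x,y)\bigr)\,a_\la(x,y)+\text{lower order},
\end{equation*}
where the lower-order terms carry extra factors of $\la^{-1}$ from $D_\xi$ derivatives of $b$. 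The principal factor is $b_0(t,x,\la\nabla_x d_g)=\exp\bigl(-\int_0^t a(s,x,\la\nabla_x d_g(x,y))\,ds\bigr)$. Since $a(s,x,\xi)=|\xi|_{g_s(x)}+$ lower order and $|\nabla_x d_g(x,y)|_{g(x)}=1$, the definition of $r(s)$ in \eqref{rg} gives $|\nabla_x d_g|_{g_s(x)}\ge r(s)$, so
\begin{equation*}
	\int_0^t a(s,x,\la\nabla_x d_g)\,ds\ge \la G(t)+O(t).
\end{equation*}
Fixing any $c\in(0,1)$, I split $b_0=e^{-c\la G(t)}\cdot e^{-\la[G^*(t,x,y)-cG(t)]+O(t)}$, where $G^*(t,x,y):=\int_0^t|\nabla_x d_g|_{g_s(x)}\,ds\ge G(t)$, and define $a_\la(t,x,y)$ to be the second factor times $a_\la(x,y)$ times the sum coming from the remaining symbol expansion and the lower-order corrections.

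Finally I would verify the three properties required of $a_\la(t,x,y)$. The support condition $d_g(x,y)\approx 1$ is inherited from the support of Sogge's amplitude; the pdo $P_t$ is microlocally concentrated on the diagonal (modulo smoothing, which feeds $r_\la$) so composing preserves this support. For the derivative bounds, the residual exponential $e^{-\la[G^*-cG]}\le e^{-(1-c)\la G(t)}$ is bounded. Each $\partial_t$ produces a factor $\la[|\nabla_x d_g|_{g_t}-c\,r(t)]=O(\la)$, giving $|\partial_t^k a_\la|\ls \la^k$; each $\partial_{x,y}$ derivative brings a factor of order $\la G(t)$ that is absorbed by the decay $(\la G(t))^k e^{-(1-c)\la G(t)}=O(1)$. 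The higher symbols $b_0 t^k q_{j,k}$ from Lemma \ref{parametrix} contribute terms bounded by $(\la t)^k \la^{-j}e^{-c\la G(t)}\ls \la^{-j}$ (using that $s^k e^{-cs}$ is bounded), yielding a convergent asymptotic series producing a symbol with the same structure.

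The main obstacle will be this last verification: carefully arranging the pdo expansion (both the composition expansion and the summation of the parametrix series $b\sim \sum b_j$) so that all remainders are of the form $O(\la^{-N})$ times a kernel whose derivatives are uniformly controlled, and simultaneously confirming that the surviving amplitude $a_\la(t,x,y)$ has the stated uniform derivative bounds in $(t,x,y,\la)$. The tension is that pulling out $e^{-c\la G(t)}$ with $c<1$ leaves just enough residual exponential decay to absorb the polynomial growth $(\la G(t))^k$ produced by $x$- and $y$-derivatives, but no more; this is exactly why the bound on $\partial_{x,y}^\alpha a_\la$ is $\la$-uniform while $\partial_t^k$ must cost $\la^k$.
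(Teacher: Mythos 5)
Your proposal follows essentially the same route as the paper: decompose via the parametrix of Lemma \ref{parametrix}, substitute Sogge's spectral cluster kernel, and reduce to the composite oscillatory integral, from which the factor $e^{-c\la G(t)}$ is extracted using $a(s,x,\xi)=|\xi|_{g_s(x)}+S^0$ and the definition of $r(s)$. The paper carries out the stationary phase in $(y,\xi)$ explicitly after inserting a cutoff $\beta(|\xi|_{g(x)}/\la)$ to localize the symbol of $P_t$ to frequency $\approx\la$ (showing the off-support piece is $O(\la^{-N})$ by integration by parts), whereas you invoke the pdo-on-oscillatory-integral (Kuranishi) expansion which contains the same content; your explanation of why $\partial_t$ costs $\la$ while $\partial_{x,y}$ is $\la$-uniform — the residual exponential $e^{-(1-c)\la G(t)}$ exactly absorbs the polynomial growth from spatial derivatives — correctly identifies the mechanism behind the stated derivative bounds.
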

	\begin{proof}
	For convenience, we choose $x$ to be geodesic normal coordinates at $x_0$ and work in a small neighborhood of $x_0$.
	Let $\chi_\la=\chi(P-\la)-r_\la$. By Lemma \ref{parametrix} we can write
	\begin{align*}
		\mathcal{H}(\chi(P-\la)f)(t,x)&=P_t\chi(P-\la)f(x)+R_t\chi(P-\la)f(x)\\
		&=P_t\chi_\la f(x) +P_t r_\la f(x) + R_t\chi(P-\la)f(x).
	\end{align*}
	Here
	\begin{align*}
		P_t \chi_\la f(x)&=(2\pi)^{-n}\iint e^{i(x-y)\cdot \xi}b(t,x,\xi)\chi_\la f(y)dyd\xi\\
		&=(2\pi)^{-n}\la^{\frac{n-1}2}\iiint e^{i(x-y)\cdot \xi+i\la d_g(y,z)}b(t,x,\xi) a_\la(y,z)f(z)dzdyd\xi.
	\end{align*}
	Since $P_t$ is a smooth family of operators in $OPS^0$, it is smoothing off diagonal, and the contribution of $y$ away from $x$ is $O(\la^{-N})$. Thus, we can assume the integral with respect to $y$ in the expression of $P_t$ is taking near diagonal and $\max\{d_g(x,x_0),d_g(y,x)\}\ll d_g(x_0,z)$.
	Since the kernel of $R_t$ is smooth, we can integrate by parts to see that the kernel of $R_t\chi(P-\la)$ is smooth and $O(\la^{-N})$.  Similarly, the kernel of $P_tr_\la$ is also smooth and $O(\la^{-N})$.
	Let $\beta\in C_0^\infty(\mathbb{R})$ satisfy
	\[\1_{[\frac12,2]}\le \beta\le \1_{[\frac14,4]}.\] Let
	\begin{align*}
		R_t^\la f(x)&=(2\pi)^{-n}\la^{\frac{n-1}2}\iiint e^{i(x-y)\cdot \xi+i\la d_g(y,z)}b(t,x,\xi)(1-\beta(|\xi|_{g(x)}/\la)) a_\la(y,z)f(z)dzdyd\xi.
	\end{align*}
	 On the support of $1-\beta(|\xi|_{g(x)}/\la)$, and $g_{ij}(x)\approx g_{ij}(x_0)=\delta_{ij}$, $d_g(y,z)\approx|y-z|$, we have
	\[|\partial_y((x-y)\cdot\xi+\la d_g(y,z))|\gs \la+|\xi|.\]
	So we integrate by parts in $y$ to see the kernel of $R_t^\la$ is smooth and $O(\la^{-N})$.
	Let
	\begin{align*}
		P_t^\la f(x)&=(2\pi)^{-n}\la^{\frac{n-1}2+n}\iiint e^{i\la((x-y)\cdot \xi+ d_g(y,z))}b(t,x,\la\xi)\beta(|\xi|_{g(x)}) a_\la(y,z) f(z)dyd\xi dz.
	\end{align*}
	By the support property of $a_\la(y,z)$, we can integrate by parts in $\xi$ to see that the kernel
	\begin{align*}
		\Big|\iint e^{i\la((x-y)\cdot \xi+ d_g(y,z))}(1-\eta(x,z))b(t,x,\la\xi)\beta(|\xi|_{g(x)})a_\la(y,z)dyd\xi\Big|\ls \la^{-N}
	\end{align*}
	if $\eta\in C^\infty$ equals 1 when $|x-z|\approx 1$ and 0 otherwise. 	Since $a(t,x,\xi)=|\xi|_{g_t(x)}\mod S^0$,  by \eqref{rg} on the support of $\beta(|\xi|_{g(x)})$ we have 
	$$\beta(|\xi|_{g(x)})\ge \la r(t)|\xi|_{g(x)}+O(1)\gs \la r(t).$$ Then for some $c>0$ we can write 
	\begin{equation}\label{bexp0}
		b(t,x,\la\xi)=e^{-c\la G(t)}\tilde b(t,x,\la\xi),
	\end{equation}where on the support of $\beta(|\xi|_{g(x)})$ we have
	\[|\partial_t^k\partial_x^\beta\partial_\xi^\alpha \tilde b(t,x,\la \xi)|\le C_{\alpha,\beta,k}\la^{k},\ \forall \alpha,\beta,k.\]
	So it suffices to handle the operator
	\begin{align*}\tilde P_t^\la f(x)&=(2\pi)^{-n}\la^{\frac{n-1}2+n}\iiint e^{i\la((x-y)\cdot \xi+ d_g(y,z))}\eta(x,z)b(t,x,\la\xi)\beta(|\xi|_{g(x)}) a_\la(y,z) f(z)dyd\xi dz\\
		&=(2\pi)^{-n}\la^{\frac{n-1}2+n}e^{-c\la G(t)}\iiint e^{i\la((x-y)\cdot \xi+ d_g(y,z))}\eta(x,z)\tilde b(t,x,\la\xi)\beta(|\xi|_{g(x)}) a_\la(y,z) f(z)dyd\xi dz.
	\end{align*}
	We shall use stationary phase in $(y,\xi)$. Indeed,
	\[\partial_{y,\xi}((x-y)\cdot\xi+ d_g(y,z))=0\Rightarrow y=x,\ \xi=\partial_yd_g(y,z),\]
	and
	\[\partial_{y,\xi}^2((x-y)\cdot\xi+ d_g(y,z))=\begin{bmatrix}
		\partial_y^2 d_g(y,z) & -I\\
		-I & 0
	\end{bmatrix}.
	\]
	Thus, $(y,\xi)=(x,\partial_yd_g(y,z))$ is a non-degenerate critical point of the phase function. By stationary phase \cite[Corollary 1.1.8]{SoggeFIO}, we can write
	\begin{align*}
		\tilde P_t^\la f(x)=\la^{\frac{n-1}2}e^{-c\la G(t)}\int e^{i\la d_g(x,z)} a_{\la}(t,x,z)f(z)dz,
	\end{align*}
	where $a_{\la}(t,x,z)$ is supported in $\{d_g(x,z)\approx 1\}$ and $|\partial_t^k\partial_{x,y}^\alpha a_{\la}(t,x,y)|\le C_{\alpha,k}\la^k,\ \forall \alpha,k.$\end{proof}
	
By using an essentially the same argument, we can obtain the kernel of the composition operator $Q\circ\chi(P-\la)$ for any $Q\in OPS^m(M)$. 
	\begin{lemma}\label{intkernel2}
		Let $m\in \mathbb{R}$ and $Q\in OPS^m(M)$. We have	
		\begin{equation}
		(Q\circ\chi(P-\la))(x,y)=\la^{\frac{n-1}2+m} e^{i\la d_g(x,y)}\tilde a_{\la}(x,y)+ \tilde r_{\la}(x,y)
		\end{equation}
		where $\tilde a_{\la}(x,y)$ is supported in $\{d_g(x,y)\approx 1\}$ and $|\partial_{x,y}^\alpha \tilde a_{\la}(x,y)|\le C_{\alpha},\ \forall \alpha.$
		And the remainder term is  smooth and satisfies
		\[|\partial_{x,y}^\alpha \tilde r_{\la}(x,y)|\le C_{\alpha},\ \ \forall\alpha.\]
		The positive constants $C_{\alpha}$ are independent of $\la$.
	\end{lemma}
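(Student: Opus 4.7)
The plan is to mirror the proof of Lemma \ref{intkernel}, replacing the Poisson parametrix $P_t$ by a generic pseudo-differential operator $Q\in OPS^m(M)$ and dropping all $t$-dependence. By a partition of unity on $M$ we may work in a single local chart, with $x$ taken as geodesic normal coordinates around some $x_0$. Then $Q$ is locally represented as
\[ Qh(x)=(2\pi)^{-n}\iint e^{i(x-y)\cdot\xi}q(x,\xi)h(y)\,dy\,d\xi,\qquad q(x,\xi)\in S^m, \]
modulo a smoothing operator. Using Sogge's formula $\chi(P-\la)(y,z)=\la^{\frac{n-1}{2}}e^{i\la d_g(y,z)}a_\la(y,z)+r_\la(y,z)$ from \cite[Lemma 5.1.3]{SoggeFIO}, the kernel of $Q\circ\chi(P-\la)$ splits into an oscillatory main term and a $Q\circ r_\la$ piece; since $r_\la$ is smooth with $|\partial^\alpha r_\la|\ls\la^{-N}$ for every $N,\alpha$, the piece $Q\circ r_\la$ has a smooth kernel with all derivatives $\ls\la^{m-N}$, which is absorbed into $\tilde r_\la$.

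For the main piece we are left with
\[ (2\pi)^{-n}\la^{\frac{n-1}{2}}\iint e^{i(x-y)\cdot\xi+i\la d_g(y,z)}q(x,\xi)a_\la(y,z)\,dy\,d\xi. \]
Exactly as in Lemma \ref{intkernel}, the off-diagonal smoothing of $Q$ lets us restrict the $y$-integration to a small neighborhood of $x$ up to an $O(\la^{-N})$ error. We next insert a cutoff $\beta(|\xi|_{g(x)}/\la)$ supported in $|\xi|_{g(x)}\approx\la$. On the complementary region the phase satisfies $|\partial_y((x-y)\cdot\xi+\la d_g(y,z))|\gs\la+|\xi|$, so integration by parts in $y$ produces another smooth remainder contributing to $\tilde r_\la$. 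Rescaling $\xi\mapsto\la\xi$ in the retained piece, and noting that the support of $a_\la$ together with $y$ near $x$ forces $d_g(x,z)\approx 1$, the main contribution becomes
\[ (2\pi)^{-n}\la^{\frac{n-1}{2}+n}\iint e^{i\la((x-y)\cdot\xi+d_g(y,z))}q(x,\la\xi)\beta(|\xi|_{g(x)})a_\la(y,z)\,dy\,d\xi. \]

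Since $q\in S^m$, we have $|\partial^\alpha_{x,\xi}q(x,\la\xi)|\ls\la^m$ uniformly on the support of $\beta(|\xi|_{g(x)})$. The phase $(x-y)\cdot\xi+d_g(y,z)$ has the unique non-degenerate critical point $y=x$, $\xi=\partial_y d_g(y,z)$, with Hessian determinant of modulus one, so the stationary phase lemma \cite[Corollary 1.1.8]{SoggeFIO} produces a factor $\la^{-n}$. Combined with the prefactor $\la^{n+\frac{n-1}{2}}$ and the symbol growth $\la^m$, we obtain exactly $\la^{\frac{n-1}{2}+m}e^{i\la d_g(x,z)}\tilde a_\la(x,z)$ with $\tilde a_\la$ supported in $\{d_g(x,z)\approx 1\}$. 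The derivative bounds $|\partial^\alpha_{x,z}\tilde a_\la|\le C_\alpha$ uniformly in $\la$ follow from the $S^m$ seminorm estimates on $q(x,\la\xi)$ and the $C^\infty$ bounds on $a_\la$; crucially, in contrast to Lemma \ref{intkernel}, there are no $t$-derivatives to generate powers of $\la$. The main obstacle is purely bookkeeping: tracking how the $\la^m$ growth from $q(x,\la\xi)$ combines with the stationary phase $\la^{-n}$ and the Sogge prefactor $\la^{(n-1)/2}$ to yield the exponent $\frac{n-1}{2}+m$, while verifying that all remainder contributions can be absorbed into a single smooth uniformly bounded kernel $\tilde r_\la$.
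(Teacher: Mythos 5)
Your proposal is correct and takes essentially the same approach the paper intends: the paper itself says Lemma \ref{intkernel2} follows "by using an essentially the same argument" as Lemma \ref{intkernel}, and you carry that out faithfully, replacing the Poisson parametrix $P_t$ by a local symbolic representative $q(x,\xi)\in S^m$, dropping the $t$-dependence and hence the exponential factor $e^{-c\la G(t)}$, and tracking the extra $\la^m$ from $q(x,\la\xi)$ on $|\xi|_{g(x)}\approx 1$ through the stationary-phase step to arrive at the exponent $\tfrac{n-1}{2}+m$. The derivative bounds on $\tilde a_\la$ follow as you say from the uniform $S^m$ seminorm estimates (all $x,\xi$-derivatives of $q(x,\la\xi)$ on that support are $\ls\la^m$, so after factoring out $\la^m$ the amplitude has uniformly bounded derivatives), and your remainder is in fact $O(\la^{-N})$, which is stronger than the stated $O(1)$ bound.
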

	This lemma implies that the kernel of $Q\circ \chi(P-\la)$   has essentially the same form as the kernel of $\chi(P-\la)$. So one may use it to obtain natural eigenfunction estimates under the operations in $OPS^m$, which extend the seminal works by Sogge \cite{SoggeFIO} and Burq-G\'erard-Tzvetkov \cite{BGT, BGTbi,BGTbi2}.
	\begin{prop}\label{Qsoggelp}
		Let $m\in \mathbb{R}$ and $Q\in OPS^m(M)$. For $\la\ge1$,  we have
		\begin{equation}\label{Qlp}	\|Q\circ\chi(P-\la)f\|_{L^p(M)}\ls \la^{m+\sigma(p) }\|f\|_{L^2(M)},\ \ 2\le p\le\infty,\end{equation}
		where\[
		\sigma(p)=\begin{cases}\frac{n-1}2(\frac12-\frac1p),\ \ \  2\le p<\frac{2(n+1)}{n-1}\\
			\frac{n-1}2-\frac np,\ \ \ \ \ \  \frac{2(n+1)}{n-1}\le p\le \infty.\end{cases}\]	
	\end{prop}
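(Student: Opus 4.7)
The plan is to reduce the estimate to the classical Sogge $L^p$ inequality by exploiting the kernel formula in Lemma \ref{intkernel2}. Writing
\[Q\circ\chi(P-\la)=T_\la+R_\la,\]
the operator $T_\la$ has oscillatory kernel
\[K_\la(x,y)=\la^{\frac{n-1}{2}+m}e^{i\la d_g(x,y)}\tilde a_\la(x,y),\]
with $\tilde a_\la$ supported where $d_g(x,y)\approx 1$ and all derivatives uniformly bounded in $\la$, while $R_\la$ has smooth kernel uniformly bounded in $\la$. On the compact manifold $M$, the contribution of $R_\la$ is $O(1)\|f\|_{L^2}$, which is dominated by $\la^{m+\sigma(p)}\|f\|_{L^2}$ once $\la$ is large, and is handled trivially for $\la$ in a bounded range.

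For the main term $T_\la$, the $L^2\to L^\infty$ endpoint follows by Cauchy--Schwarz on the kernel:
\[\|T_\la f\|_{L^\infty}\le \sup_x \|K_\la(x,\cdot)\|_{L^2}\|f\|_{L^2}\ls \la^{\frac{n-1}{2}+m}\|f\|_{L^2},\]
which is exactly $\la^{m+\sigma(\infty)}\|f\|_{L^2}$. To handle the Stein--Tomas critical exponent $p_c=\frac{2(n+1)}{n-1}$, I would apply the $TT^*$ argument. The kernel of $T_\la T_\la^*$ is
\[\la^{n-1+2m}\int e^{i\la(d_g(x,z)-d_g(y,z))}\tilde a_\la(x,z)\overline{\tilde a_\la(y,z)}\,dz,\]
and the Carleson--Sj\"olin / H\"ormander stationary phase lemma, using non-degeneracy of the mixed Hessian of the phase in $z$, bounds this by $\la^{\frac{n-1}{2}+2m}(1+\la d_g(x,y))^{-\frac{n-1}{2}}$. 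The standard Stein--Tomas computation then produces $\|T_\la f\|_{L^{p_c}}\ls \la^{\sigma(p_c)+m}\|f\|_{L^2}$.

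The full range $2\le p\le\infty$ follows by interpolation between the endpoints: between $L^2\to L^{p_c}$ and $L^2\to L^\infty$ for $p_c\le p\le\infty$, and between the trivial $L^2\to L^2$ bound $\|T_\la f\|_{L^2}\ls \la^m\|f\|_{L^2}$ (which holds by spectral calculus, since $Q\in OPS^m$ composed with $\chi(P-\la)$ localizes to frequencies $\approx \la$) and $L^2\to L^{p_c}$ for $2\le p\le p_c$. The main technical step is the Carleson--Sj\"olin stationary phase estimate, which rests on non-degeneracy of the geodesic distance phase; this is precisely the ingredient driving Sogge's original proof of \eqref{soggelp}, so essentially no new difficulty arises beyond bookkeeping the extra factor $\la^m$ carried by Lemma \ref{intkernel2} through each step.
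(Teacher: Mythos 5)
Your approach is essentially the one the paper has in mind: the paper offers no detailed proof of Proposition~\ref{Qsoggelp}, only the remark following Lemma~\ref{intkernel2} that the kernel of $Q\circ\chi(P-\la)$ has the same structure as that of $\chi(P-\la)$ up to a factor $\la^m$, so that Sogge's argument carries over; your Cauchy--Schwarz endpoint, $TT^*$/Carleson--Sj\"olin step at $p_c$, and interpolation are exactly that carry-over.

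One step needs repair, however. You claim the remainder contribution $O(1)\|f\|_{L^2(M)}$ is ``dominated by $\la^{m+\sigma(p)}\|f\|_{L^2}$ once $\la$ is large.'' That is false whenever $m+\sigma(p)<0$, e.g.\ $m<0$ and $p=2$: there $\la^{m+\sigma(p)}\to 0$ while $O(1)$ does not, so a bounded remainder cannot be absorbed. As literally stated, Lemma~\ref{intkernel2} only gives $|\partial_{x,y}^\alpha\tilde r_\la|\le C_\alpha$ with $C_\alpha$ independent of $\la$, which is insufficient for negative $m$. The fix is to note that, as in the proof of Lemma~\ref{intkernel} (which Lemma~\ref{intkernel2} is said to follow ``by essentially the same argument''), the remainder is actually $O(\la^{-N})$ for every $N$: it arises from composing the smooth $O(\la^{-N})$ tail of $\chi(P-\la)$ with the order-$m$ operator $Q$ and from integration by parts off the frequency shell $|\xi|\approx\la$, both of which yield rapid decay in $\la$. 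With that sharpened remainder bound your absorption step is correct. A related small imprecision: the $L^2\to L^2$ bound $\la^m$ that you justify by spectral calculus holds for the full operator $Q\circ\chi(P-\la)$, not directly for the oscillatory piece $T_\la$; one should interpolate the full operator and absorb the $O(\la^{-N})$ remainder at each endpoint, rather than interpolating $T_\la$ alone.
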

	
		\begin{prop}\label{Qbgtbi}(Bilinear estimates)
		Let $m_j\in \mathbb{R}$ and $Q_j\in OPS^{m_j}(M)$ for $j=1,2$. If $\mu\ge\la\ge1$,  then we have
		\begin{equation}\label{Qbi}	\|Q_1\circ\chi(P-\la)f\cdot Q_2\circ\chi(P-\mu)g \|_{L^2(M)}\ls \la^{m_1}\mu^{m_2}B\|f\|_{L^2(M)}\|g\|_{L^2(M)},\end{equation}
		where\[
		B=\begin{cases}
		\la^\frac14,\ \ \ \ \ \ \ \ \ \ n=2\\
		\la^{\frac12}\sqrt{\log\la},\ \ n=3\\
		\la^{\frac{n-2}2},\ \ \ \ \ \ \  n\ge4.\end{cases}\]	
	\end{prop}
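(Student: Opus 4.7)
The plan is to reduce Proposition \ref{Qbgtbi} to the bilinear spectral projection estimates of Burq--Gérard--Tzvetkov \cite{BGTbi, BGTbi2} on closed manifolds, with Lemma \ref{intkernel2} serving as the bridge: that lemma shows the kernel of $Q_j\circ\chi(P-\la_j)$ has precisely the same Fourier integral structure as the spectral projector $\chi(P-\la_j)$, modified only by an overall factor $\la_j^{m_j}$ on the oscillatory principal part.

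Concretely, I would first apply Lemma \ref{intkernel2} to write each composition as an oscillatory part $\la_j^{(n-1)/2+m_j}e^{i\la_j d_g(x,y)}\tilde a_{\la_j}(x,y)$ (with amplitude supported where $d_g(\cdot,\cdot)\approx 1$ and with uniform derivative bounds) plus a smooth remainder $\tilde r_{\la_j}$ of uniform size. The bilinear product then splits into four terms; the three involving at least one remainder are controlled by Hölder's inequality together with the $L^2$- and $L^4$-mapping bounds supplied by Proposition \ref{Qsoggelp}, and they are dominated by the target $\la^{m_1}\mu^{m_2}B\|f\|_{L^2}\|g\|_{L^2}$ in every dimension. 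The substantive contribution is
\begin{equation*}
\mathcal{I}(x)=\la^{\frac{n-1}{2}+m_1}\mu^{\frac{n-1}{2}+m_2}\iint e^{i\la d_g(x,y)+i\mu d_g(x,z)}\tilde a_\la(x,y)\tilde a_\mu(x,z)\,f(y)g(z)\,dy\,dz,
\end{equation*}
and after factoring out $\la^{m_1}\mu^{m_2}$, what remains is precisely the bilinear oscillatory integral that BGT bound by $B\|f\|_{L^2}\|g\|_{L^2}$ for the product $\chi(P-\la)f(x)\cdot\chi(P-\mu)g(x)$.

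The step that requires genuine care — and which I would flag as the main obstacle — is checking that the BGT argument, phrased in their papers for the amplitudes associated with $\chi(P-\la)$ itself, carries over to our amplitudes $\tilde a_\la,\tilde a_\mu$, which depend on the spectral parameters with only uniform $C^k$ bounds. Since their proof reduces everything to $L^2$ bounds on oscillatory bilinear integral operators with phase $\la d_g(x,y)+\mu d_g(x,z)$ and only invokes finitely many derivative bounds on the amplitudes plus the non-degeneracy of the geodesic phase, the transfer is routine. Granted this, the dimensional cases are inherited directly from \cite{BGTbi, BGTbi2}: for $n=2$ the gain $\la^{1/4}$ comes via an $L^4$-duality argument for spherical means; for $n=3$ a dyadic decomposition relative to the distance between the two frequency spheres produces the logarithmic loss; and for $n\ge 4$ stationary phase in $x$ yields the $\la^{(n-2)/2}$ factor.
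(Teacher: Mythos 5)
Your proposal follows precisely the route the paper intends: the paper states Proposition \ref{Qbgtbi} immediately after Lemma \ref{intkernel2} with only the remark that the kernel of $Q\circ\chi(P-\la)$ has ``essentially the same form'' as that of $\chi(P-\la)$ up to the factor $\la^m$, so the Burq--G\'erard--Tzvetkov bilinear machinery applies verbatim, and your sketch supplies exactly the details the paper omits (including the correct observation that BGT only needs uniform $C^k$ bounds on the $\la$-dependent amplitude). The one detail worth making explicit in a full write-up is that the cross terms involving $\tilde r_\la$ require the remainder in Lemma \ref{intkernel2} to satisfy $O(\la^{-N})$ bounds (which does hold, by the same argument as in the proof of Lemma \ref{intkernel}) rather than the merely stated $O(1)$, so that these terms are dominated by $\la^{m_1}\mu^{m_2}B$ even when $m_j<0$.
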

		\begin{prop}\label{Qbgtlp}(Restriction estimates)
		Let $m\in \mathbb{R}$ and $Q\in OPS^m(M)$. Let $\Sigma$ be a $k$-dimensional submanifold in $M$. For $\la\ge1$,  we have
		\begin{equation}\label{Qbgt1}	\|Q\circ\chi(P-\la)f\|_{L^p(\Sigma)}\ls \la^{m}A\|f\|_{L^2(M)},\ \ 2\le p\le\infty,\end{equation}
		where
		\begin{itemize}
			\item If $k\le n-3$, then $A= \la^{\frac{n-1}2-\frac{k}p}$ for $2\le p\le \infty$.
			\item If $k=n-2$, then $A= \la^{\frac{n-1}2-\frac{k}p}$ for $2< p\le \infty$, and $A= \la^{\frac12}(\log\la)^\frac12$ for $p=2$.
			\item If $k=n-1$, then $A= \la^{\frac{n-1}2-\frac{k}p}$ for $\frac{2n}{n-1}\le p\le \infty$, and $A= \la^{\frac{n-1}4-\frac{n-2}{2p}}$ for $2\le p<\frac{2n}{n-1}$.
		\end{itemize}
	\end{prop}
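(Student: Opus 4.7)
The strategy is to reduce everything to Lemma~\ref{intkernel2} and then invoke the classical restriction estimates of Burq--G\'erard--Tzvetkov \cite{BGT} and Hu \cite{hu} essentially as a black box. By Lemma~\ref{intkernel2} we may split
\[
(Q\circ\chi(P-\la))(x,y) = \la^{m}\cdot\la^{\frac{n-1}{2}} e^{i\la d_g(x,y)}\tilde a_\la(x,y) + \tilde r_\la(x,y),
\]
where $\tilde a_\la$ is supported in $\{d_g(x,y)\approx1\}$ with derivatives bounded uniformly in $\la$, and $\tilde r_\la$ is smooth with derivatives bounded uniformly in $\la$. The remainder contribution is harmless: by Cauchy--Schwarz its operator norm from $L^2(M)$ to $L^\infty(\Sigma)\hookrightarrow L^p(\Sigma)$ is $O(1)$ uniformly in $\la$, which is absorbed into $\la^m A$ for any of the ranges of $A$ above.

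Thus the task reduces to proving the stated bound for the oscillatory integral operator $T_\la$ with kernel $\la^{\frac{n-1}{2}} e^{i\la d_g(x,y)}\tilde a_\la(x,y)$, treated as an operator from $L^2(M)$ to $L^p(\Sigma)$. The key observation is that $T_\la$ has \emph{exactly the same structural form} as the oscillatory part of $\chi(P-\la)$ in Sogge's parametrix \cite[Lemma 5.1.3]{SoggeFIO}: a Fourier integral kernel with phase $\la d_g(x,y)$, amplitude of order~$1$ supported at $d_g(x,y)\approx1$, and the overall normalization $\la^{\frac{n-1}{2}}$. Consequently the $TT^*$ analysis proceeds identically: the kernel of $T_\la T_\la^*$ is
\[
K_\la(x,x') = \la^{n-1}\int e^{i\la(d_g(x,y)-d_g(x',y))}\tilde a_\la(x,y)\overline{\tilde a_\la(x',y)}\,dy, \qquad x,x'\in\Sigma,
\]
and stationary phase in $y$ produces the same bounds on $K_\la$ as in the spectral cluster case, since those bounds depend only on the phase geometry and the transversality of $\Sigma$ to the unit cosphere bundle, not on the specific amplitudes.

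With this reduction, the three regimes fall out from the literature. For $k\le n-3$ the easy $L^2\to L^p$ bound with $A=\la^{\frac{n-1}{2}-k/p}$ follows by Schur testing on $K_\la$, together with the interpolation between the trivial $L^2\to L^2$ estimate (with bound $\la^{\frac{n-1-k}{2}}$) and the $L^2\to L^\infty$ estimate (with bound $\la^{\frac{n-1}{2}}$). For $k=n-1$ one applies the Burq--G\'erard--Tzvetkov argument \cite{BGT}, which splits at the threshold $p=\frac{2n}{n-1}$ and uses curvature of $\Sigma$ together with the Stein--Tomas-type bound below threshold. For $k=n-2$ one applies Hu's variant \cite{hu}, in which the logarithmic loss at $p=2$ arises from a borderline accumulation in the stationary phase estimate on $K_\la$.

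The main conceptual obstacle is verifying that Hu's delicate $\sqrt{\log\la}$ loss at the endpoint $k=n-2,\ p=2$ carries over verbatim; the point is that Lemma~\ref{intkernel2} gives us the same amplitude regularity and support size as in \cite{hu}, so no new geometric input is required. Finally, restoring the factor $\la^m$ from the split above completes the proof.
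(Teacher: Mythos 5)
Your proposal follows the route the paper has in mind: the paper gives no explicit proof of Proposition~\ref{Qbgtlp}, only the remark after Lemma~\ref{intkernel2} that the kernel of $Q\circ\chi(P-\la)$ has the same form as that of $\chi(P-\la)$, so the restriction estimates of Burq--G\'erard--Tzvetkov and Hu carry over. Your reduction to the oscillatory integral $T_\la$ and the subsequent appeal to \cite{BGT,hu} is exactly this intended argument.

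Two small corrections are needed, however. The claim that the remainder $\tilde r_\la$ is $O(1)$ and hence ``absorbed into $\la^m A$ for any of the ranges of $A$'' is false once $m$ is sufficiently negative: for instance at $k=n-1$, $p=2$ the target bound is $\la^{m+1/4}$, which tends to zero as soon as $m<-1/4$, so an $O(1)$ remainder cannot be absorbed. The fix is that $\tilde r_\la$ should be $O(\la^{-N})$ for every $N$, exactly as in Lemma~\ref{intkernel}: the parametrix remainder for $\chi(P-\la)$ decays rapidly in $\la$, and composing with a fixed $Q\in OPS^m$ preserves rapid decay because the kernel is smooth with uniformly bounded derivatives. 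With the $O(\la^{-N})$ bound your absorption argument works for all $m\in\mathbb{R}$. Second, your derivation of the $L^2\to L^2$ endpoint $\la^{(n-1-k)/2}$ for $k\le n-3$ ``by Schur testing on $K_\la$'' only produces that exponent when $k<\frac{n-1}{2}$; in the residual range $\frac{n-1}{2}\le k\le n-3$ (nonempty once $n\ge5$) the Schur integral saturates at $\la^{(n-1)/2}$ and the sharp bound still requires the full argument of \cite{BGT,hu}. Since you cite those results as a black box anyway, this second point is only an imprecision in the motivating sentence, not a gap in the proof.
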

	Let $\1_{(\la-1,\la]}$ be the indicator function of  $(\la-1,\la]$. Since we may assume $\chi$ is non-vanishing on $[-1,1]$, we have $\chi(P-\la)\tilde f=\1_{(\la-1,\la]}(P)f$ if $\tilde f=\chi(P-\la)^{-1}\1_{(\la-1,\la]}(P)f$. Therefore,  the propositions above still hold if we replace $\chi(P-\la)$ by $\1_{(\la-1,\la]}(P)$.

		\section{Interior decay and comparable norms}
		In this section, we prove Theorem \ref{thm2} and Theorem \ref{thm3}, by using microlocal analysis techniques involving the parametrix of the Poisson integral operator in Lemma \ref{parametrix}. Note that the symbol of the parametrix $P_t$  has an exponential decay factor $\sim e^{-ct\langle \xi\rangle}$, which formally gives the desired exponential decaying property if the frequency $|\xi| \gs \la$. This is basic idea in the proof.

		Let $P\in OPS^1$ be classical and self-adjoint with principal symbol $p(x,\xi)$ positive on $T^*M\setminus 0$. Let $\mu\in \mathbb{R}$ and let $m\in C^\infty(\mathbb{R})$ satisfy
		\begin{equation}
			|m^{(k)}(\tau)|\le C_k (1+|\tau|)^{\mu-k},\ k=0,1,2,...
		\end{equation} Let $\la\ge1$ and $m_\la(\tau)=m(\tau/\la)$. We have $m_\la\in S^{\mu_1}$ if $\mu_1=\max\{\mu,0\}$, since
\begin{equation}
	|m_\la^{(k)}(\tau)|\le C_k (1+|\tau|)^{\mu_1-k}, \ k=0,1,2,...
\end{equation}
		The constant $C_k$ is independent of $\la$. We need the following lemma on the kernel of $m_\la(P)$, which can be viewed as a more precise version of \cite[Theorem 4.3.1]{SoggeFIO}. We shall prove it in the appendix.
		
		\begin{lemma}\label{pdolemma}
		 Let $\mu,\ \mu_1,\ m,\ m_\la$ defined as above. We have
			\[m_\la(P)(x,y)=(2\pi)^{-n}\int e^{i(x-y)\cdot \xi}M_\la(x,\xi)d\xi+R_\la(x,y)\]	
			where the symbol $M_\la\in S^{\mu_1}$ satisfies
			\begin{equation}\label{mlasym}
				M_\la(x,\xi)\sim \sum_{\ell=0}^\infty m_\la^{(\ell)}(p(x,\xi)) q_\ell(x,\xi),\ \  q_\ell\in S^{\lfloor \ell/2\rfloor},
			\end{equation}
			and the remainder term $R_\la$ is smooth with
			\[|\partial_{x,y}^\alpha R_\la(x,y)|\le C_\alpha,\ \ \forall \alpha.\]
			The symbols $ q_\ell$ and the constant $C_\alpha$ are independent of $\la$.
		\end{lemma}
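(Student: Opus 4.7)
The plan is to realize $m_\la(P)$ via the functional calculus and Fourier inversion against the half-wave group $e^{itP}$, and to read off the symbol $M_\la$ by substituting the classical Fourier integral representation of $e^{itP}$ valid for small $|t|$. Formally $m_\la(P)=(2\pi)^{-1}\int \hat m_\la(t)e^{itP}\,dt$, which I would make rigorous by fixing a cutoff $\rho\in C_0^\infty(\mathbb{R})$ with $\rho\equiv 1$ near $0$ and supported in $(-\epsilon,\epsilon)$, where $\epsilon$ is smaller than the injectivity radius of the bicharacteristic flow of $p$. Pairing $(1-\rho(t))\hat m_\la(t)$ with $e^{itP}$ yields an operator whose kernel is bounded in every $C^k$-norm uniformly in $\la$: integrating by parts $N$ times in $t$ exchanges $t^{-N}$ for $m_\la^{(N)}(P)$, and the hypothesis $|m_\la^{(N)}(\tau)|\le C_N(1+|\tau|)^{\mu_1-N}$ (uniformly in $\la$) makes the integrand absolutely integrable for $N$ large. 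This tail is absorbed into $R_\la$.

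For the remaining piece I would use the standard FIO representation
\begin{equation*}
e^{itP}(x,y)=(2\pi)^{-n}\int e^{i(\phi(t,x,\xi)-y\cdot\xi)}a(t,x,\xi)\,d\xi,\quad |t|<\epsilon,
\end{equation*}
with $\phi(0,x,\xi)=x\cdot\xi$, $\partial_t\phi=p(x,\nabla_x\phi)$ (eikonal), and $a$ a classical symbol of order $0$ with Taylor expansion $a\sim\sum_{k\ge 0} t^ka_k$, $a_k\in S^{-k}$, produced by the usual transport equations. Since the Hamilton--Jacobi flow preserves homogeneity of degree $1$ in $\xi$, writing $\phi=x\cdot\xi+tp(x,\xi)+\psi(t,x,\xi)$ gives $\psi=O(t^2)$ with $\psi$ a smooth $t$-family of symbols of order $1$ in $\xi$. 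Interchanging integrations identifies
\begin{equation*}
M_\la(x,\xi)=\frac{1}{2\pi}\int \rho(t)\hat m_\la(t)\,e^{itp(x,\xi)}\,e^{i\psi(t,x,\xi)}\,a(t,x,\xi)\,dt
\end{equation*}
as the desired symbol.

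Expanding $e^{i\psi}=\sum_j (i\psi)^j/j!$ with $\psi^j=t^{2j}r_j$, $r_j$ in the $S^j$-family, together with $a\sim \sum_k t^k a_k$, and using
\begin{equation*}
\frac{1}{2\pi}\int \hat m_\la(t)\,e^{itp(x,\xi)}\,t^\ell\,dt = (-i)^\ell m_\la^{(\ell)}(p(x,\xi))
\end{equation*}
(the cutoff $\rho$ contributes only $O(\la^{-\infty})$ uniform errors by the argument above), one collects powers of $t$: the coefficient of $m_\la^{(\ell)}(p)$ is a finite sum over $2j+k=\ell$ of terms $r_j a_k\in S^{j-k}=S^{3j-\ell}$. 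Maximizing over $j\in\{0,\dots,\lfloor\ell/2\rfloor\}$ gives order $\lfloor\ell/2\rfloor$, establishing $q_\ell\in S^{\lfloor\ell/2\rfloor}$ as claimed. A standard Borel summation then produces a genuine $M_\la\in S^{\mu_1}$ realizing the asymptotic expansion \eqref{mlasym}, and the difference between the Borel sum and the true integrand is a symbol of order $-\infty$ whose operator contribution joins $R_\la$.

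The main obstacle is keeping every step uniform in $\la$, since the conclusion demands $\la$-independent bounds on $q_\ell$ and on $R_\la$. The crucial input is that the family $\{m_\la\}_{\la\ge 1}$ is \emph{bounded} in the Fr\'echet space $S^{\mu_1}$, so all symbolic seminorms of $\hat m_\la$ away from the origin, as well as the seminorms needed for the Borel construction, can be chosen independently of $\la$. A partition of unity subordinate to a finite coordinate cover of $M$ globalizes the local construction; because all kernels produced above are smoothing off the diagonal, the usual pseudolocality argument shows that the transition errors between charts are smoothing with $\la$-uniform bounds and can be absorbed in $R_\la$.
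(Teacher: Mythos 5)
Your proposal is correct and follows the same overall scheme as the paper: realize $m_\la(P)$ via Fourier inversion against $e^{itP}$, cut off to small $|t|$, show the tail is uniformly smoothing, substitute a half-wave parametrix, Taylor expand in $t$, and count orders to get $q_\ell\in S^{\lfloor\ell/2\rfloor}$. The genuine difference is the choice of parametrix. The paper uses Sogge's representation $e^{itP}(x,y)=(2\pi)^{-n}\int e^{i\varphi(x,y,\xi)+itp(y,\xi)}q(t,x,y,\xi)\,d\xi$, in which $t$ enters the phase linearly: the $t$-integral then collapses immediately to $\tilde m_\la^{(\ell)}(p(y,\xi))$, but because $\varphi(x,y,\xi)\neq (x-y)\cdot\xi$ the result is an FIO kernel rather than a $\psi$do kernel, and the paper has to run a phase homotopy $\varphi_t=(1-t)\varphi+t(x-y)\cdot\xi$ (together with a Taylor expansion at $y=x$ to replace $p(y,\xi)$ by $p(x,\xi)$) before it can read off a symbol in $(x,\xi)$ alone. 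You instead use the eikonal parametrix $e^{itP}(x,y)=(2\pi)^{-n}\int e^{i(\phi(t,x,\xi)-y\cdot\xi)}a(t,x,\xi)\,d\xi$, which is already of $\psi$do form in $(x,\xi)$ at every $t$, so you get $m_\la^{(\ell)}(p(x,\xi))$ directly and avoid both the homotopy and the $y\to x$ Taylor step; the price is that the phase is no longer $t$-linear, so you must formally expand $e^{i\psi}$ and bookkeep the $t$-powers from $\psi=O(t^2)$ together with those from the amplitude. Both bookkeeping schemes give the same order count $\lfloor\ell/2\rfloor$ (in your scheme the bound $j-k\le \lfloor\ell/2\rfloor$ follows from $2j\le\ell-k$). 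Your route is somewhat cleaner on the $y$-variable side; the paper's route is more explicit about the Taylor remainders and the $\la$-uniformity of the smoothing errors, which you state but do not work out — if you flesh this out you should control $\partial_t^{N+1}(e^{i\psi}a)$ on the $t$-support, which produces up to $N+1$ powers of $|\xi|$ that must be compensated by the extra $t$-powers and by taking $N$ large, exactly as in the paper's remainder analysis.
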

			In the following, we assume  $P\in OPS^1(M)$ is classical and self-adjoint with principal symbol $p(x,\xi)=|\xi|_{g(x)}$. 
		Let $\1_{\ge \la}$ be the indicator function of $[\la,\infty)$, and let $f_{\ge\la}=\1_{\ge \la}(P)f$.

	\noindent \textbf{Proof of Theorem \ref{thm2}.}
	Since $(\mathcal{H}f)(t,x)=P_tf(x)+R_tf(x)$ with $R_t\in OPS^{-\infty}$ by Lemma \ref{parametrix}, we shall estimate the norms $\|P_tf_{\ge\la}\|_{L^p(M)}$ and $\|R_tf_{\ge\la}\|_{L^p(M)}$.

	If $\{e_j\}$ is an orthonormal eigenbasis associated with the eigenvalues $\{\la_j\}$ of the operator $P$,  then by the H\"ormander's $L^2-L^\infty$ bounds \eqref{soggelp} we have
	\[\sum_{\la_j\ge\la}(1+\la_j)^{-N-n}|e_j(x)|^2\ls \sum_{k\ge \la}k^{-N-1}\ls \la^{-N},\ \forall N>0.\]
	Since $R_t$ is smoothing,
	\begin{align*}
		\|R_tf_{\ge\la}\|_{L^\infty(M)}&=\|R_t(1+P)^{N+n}(1+P)^{-N-n}f_{\ge\la}\|_{L^\infty(M)}\\
		&\ls \|(1+P)^{-N-n}f_{\ge\la}\|_{L^\infty(M)}\\
		&\ls \la^{-N}\|f_{\ge\la}\|_{L^1(M)},\ \ \forall N>0.
	\end{align*}
Now we begin to handle $P_t$. By Lemma \ref{parametrix}, we have \[P_t f(x)=(2\pi)^{-n}\int e^{ix\cdot \xi}b(t,x,\xi)\hat f(\xi)d\xi\]
	where  the symbol
	\[b(t,x,\xi)\sim b_0(t,x,\xi)+\sum_{j\ge1}\sum_{2\le k\le 2j}b_0(t,x,\xi)t^k q_{j,k}(t,x,\xi)\]
	with $q_{j,k}(t)$ are smooth families in $S^{k-j}$ and $b_0(t,x,\xi)=e^{-\int_0^t a(s,x,\xi)ds}\in \mathcal{P}_e^0$.
	Let $0<\eps <\frac{1}{100}$ and $\eta\in C^\infty(\mathbb{R})$ satisfy
	\[\1_{[1,\infty)}\le \eta\le 1_{[1-\eps,\infty)}.\]
	Let $\eta_1\in C^\infty(\mathbb{R})$ satisfy
	\[\1_{[1-\eps,\infty)}\le \eta_1\le 1_{[1-2\eps,\infty)}.\]
	Clearly, we have $\eta(P/\la)f_{\ge\la}=f_{\ge\la}$.
	We split
	\begin{align*}
		P_tf_{\ge\la}(x)&=\int e^{ix\cdot \xi}b(t,x,\xi)\eta_1(p(x,\xi)/\la)(f_{\ge\la})^\wedge(\xi)d\xi+\int e^{ix\cdot \xi}b(t,x,\xi)(1-\eta_1(p(x,\xi)/\la))(\eta(P/\la)f_{\ge\la})^\wedge(\xi)d\xi\\
		&:=\tilde P_tf_{\ge\la}(x)+\tilde R_tf_{\ge\la}(x).
	\end{align*}
	The operator $\tilde R_t$
	is smoothing by Lemma \ref{pdolemma}, since the supports of $1-\eta_1$ and $\eta$ are disjoint. Then we also have
\begin{equation}
	\|\tilde R_t f_{\ge\la}\|_{L^\infty(M)}\ls \la^{-N}\|f_{\ge\la}\|_{L^1(M)},\ \forall N.
\end{equation}
	Next, we handle $\tilde P_t$. 
	The kernel of $\tilde P_t$ is 
	\[K_t(x,y)=(2\pi)^{-n}\int e^{i(x-y)\cdot \xi}b(t,x,\xi)\eta_1(p(x,\xi)/\la)d\xi.\]
	On the support of $\eta_1(p(x,\xi)/\la)$ we have $p(x,\xi)=|\xi|_{g(x)}\ge (1-2\eps)\la$. Since the principal symbol of $a(t,x,\xi)$ is $|\xi|_{g_t(x)}$, by \eqref{rg} we have 
	$$|\xi|_{g_t(x)} \ge r(t)|\xi|_{g(x)}\ge (1-2\eps)\la r(t).$$
	Recall \eqref{rg}. Then we can write
	\begin{equation}\label{bexp}
		b(t,x,\xi)=e^{-(1-3\eps)\la G(t)}\tilde b(t,x,\xi),
	\end{equation}where
	\[|\partial_t^k\partial_x^\beta\partial_\xi^\alpha \tilde b(t,x,\xi)|\le C_{\alpha,\beta,k}(1+|\xi|)^{k-|\alpha|},\ \forall \alpha,\beta,k.\]
	
	Since $\tilde b(t,x,\xi)\eta_1(p(x,\xi)/\la)\in S^0$, we have for $1<p<\infty$
\begin{equation}
	\|\tilde P_t\|_{L^p(M)\to L^p(M)}\ls e^{-(1-3\eps)\la G(t)}.
\end{equation}
	For $p=\infty$ or 1, we need to calculate the kernel. We have
\begin{equation}\label{ker1}
	|K_t(x,y)|\ls e^{-(1-3\eps)\la G(t)}\int_{|\xi|\ge\la}e^{-\eps t|\xi|/2}d\xi\ls e^{-(1-3\eps)\la G(t)}t^{-n}.
\end{equation}
	But integration by parts gives
\begin{equation}\label{ker2}
	|K_t(x,y)|\ls e^{-(1-3\eps)\la G(t)} |x-y|^{-N}\int_{|\xi|\ge\la}|\xi|^{-N}\ls e^{-(1-3\eps)\la G(t)} \la^n(\la|x-y|)^{-N}.
\end{equation}
	So by splitting the integral into two parts: $|x-y|\le \la^{-1}$ and $|x-y|>\la^{-1}$ we have
	\[\int|K_t(x,y)|dx\ls e^{-(1-3\eps)\la G(t)}(1+(\la t)^{-n}).\]
	By Young's inequality, for $p=\infty$ or 1
\begin{equation}
	\|\tilde P_t\|_{L^p(M)\to L^p(M)}\ls e^{-(1-3\eps)\la G(t)}(1+(\la t)^{-n}).
\end{equation}
	Then for $p=\infty$ or 1
	\begin{equation}\label{Linfty}
		\|\mathcal{H}f_{\ge\la}\|_{L^p(\Sigma_t)}\ls e^{-(1-3\eps)\la G(t)}(1+(\la t)^{-n})\|f_{\ge\la}\|_{L^p(M)}+\la^{-N}\|f_{\ge\la}\|_{L^1(M)}.
	\end{equation}
	But by the maximum principle, when $t\le\la^{-1}$, $$\|\mathcal{H}f_{\ge\la}\|_{L^\infty(\Sigma_t)}\le \|f_{\ge\la}\|_{L^\infty(M)}.$$
	So we always have for $1<p\le \infty$
	\begin{equation}\label{Lpexp}
		\|\mathcal{H}f_{\ge\la}\|_{L^p(\Sigma_t)}\ls e^{-(1-3\eps)\la G(t)}\|f_{\ge\la}\|_{L^p(M)}+\la^{-N}\|f_{\ge\la}\|_{L^1(M)}.
	\end{equation}
	Furthermore, if $f$ has spectral frequency $\approx\la$, then \eqref{Lpexp} is valid for all $1\le p\le\infty$ by repeating the argument above, since \eqref{ker1} can be improved in this case.
	So we finish the proof of Theorem \ref{thm2}.
		
	Next, we show that if $f$ has spectral frequency $\approx\la$, than  $\mathcal{H}f$ also has a lower bound near the boundary.
	
		\begin{lemma}\label{lplowbounds}
			Let $\beta\in C_0^\infty(\mathbb{R}^+)$ and $f_\la=\beta(P/\la)f$.  For $t \leq \lambda^{-1}$, we have for any $1 \leq p \leq \infty$,
		\begin{equation}\label{lowbd}
			\|\mathcal{H}f_\la\|_{L^p(\Sigma_t)}\gs \|f_\la\|_{L^p(M)}.
		\end{equation}
		\end{lemma}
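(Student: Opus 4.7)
The plan is to show that for $t\le\la^{-1}$ the harmonic extension $\mathcal{H}f_\la$ remains $L^p$-comparable to $f_\la$ by inverting the parametrix $P_t$ on the frequency band where $f_\la$ is supported. First I decompose $\mathcal{H}f_\la=P_tf_\la+R_tf_\la$ using Lemma \ref{parametrix}. Since $R_t(1+P)^N$ is $L^p$-bounded uniformly for every $N$ and $\beta(P/\la)$ localizes $f_\la$ to frequencies of size $\approx\la$, the smoothing remainder satisfies $\|R_tf_\la\|_{L^p(\Sigma_t)}\ls\la^{-N}\|f_\la\|_{L^p(M)}$, so it is negligible.

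The core step is a symbol analysis of $b(t,x,\xi)$ on $|\xi|_g\approx\la$ for $t\in[0,\la^{-1}]$. The leading factor $b_0(t,x,\xi)=e^{-\int_0^t a(s,x,\xi)\,ds}$ obeys $\int_0^t a\,ds=O(t\la)=O(1)$ on this region, so $b_0$ lies in a fixed interval $[c_1,1]$ with $c_1>0$. Each correction $b_j=b_0\sum_{2\le k\le 2j}t^kq_{j,k}$ with $q_{j,k}\in S^{k-j}$ contributes $O((t\la)^k\la^{-j})=O(\la^{-j})$, giving $b(t,x,\xi)=b_0(t,x,\xi)(1+O(\la^{-1}))$, uniformly bounded between two positive constants. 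Moreover, each $\xi$-derivative of $b$ costs a factor $t\ls\la^{-1}$, so after multiplying by a cutoff $\beta_1(p(x,\xi)/\la)$ with $\beta_1\equiv1$ on $\supp\beta$, the quotient
\[c(t,x,\xi)=\beta_1(p(x,\xi)/\la)/b(t,x,\xi)\]
is a symbol in $S^0$ with compact $\xi$-support at scale $\la$ whose seminorms are uniform in $\la$ and $t$.

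I would then let $Q_t$ be the pseudo-differential operator with symbol $c(t,\cdot)$. Because its symbol has compact $\xi$-support at scale $\la$, the kernel of $Q_t$ satisfies Schur-type bounds uniformly in $\la$ and $t$, so $Q_t$ is $L^p\to L^p$-bounded for every $1\le p\le\infty$. The symbol calculus gives $Q_tP_t=\beta_1(P/\la)+E_t$ on the relevant frequency range with $E_t\in OPS^{-1}$, and in particular $\|E_tf_\la\|_{L^p}\ls\la^{-1}\|f_\la\|_{L^p}$. Since $\beta_1(P/\la)f_\la=f_\la$, applying this identity to $f_\la$ and taking $L^p$ norms yields
\[\|f_\la\|_{L^p}\ls\|P_tf_\la\|_{L^p}+\la^{-1}\|f_\la\|_{L^p},\]
so absorbing the last term for $\la$ sufficiently large, and combining with the estimate on $R_tf_\la$, we obtain $\|f_\la\|_{L^p(M)}\ls\|\mathcal{H}f_\la\|_{L^p(\Sigma_t)}$. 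The bounded range $\la\le\la_0$ is handled by a compactness argument, since $f_\la$ then lies in a fixed finite-dimensional spectral subspace on which the harmonic extension operator is injective.

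The main obstacle is to propagate the pointwise symbol bounds on $1/b(t,x,\xi)$ to uniform $L^p$-operator bounds on $Q_t$ at the endpoints $p=1,\infty$, where Calderón--Zygmund theory does not directly apply. The frequency localization built into $\beta(P/\la)$ is decisive: it forces the inverse symbol to have compact $\xi$-support of scale $\la$, producing the Schur bounds on its kernel that are needed uniformly in $\la$ and $t$.
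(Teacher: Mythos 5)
Your proposal is correct and follows essentially the same approach as the paper's proof: decompose $\mathcal{H}f_\la$ via the parametrix, observe that on the band $|\xi|\approx\la$ with $t\le\la^{-1}$ the symbol $b$ is bounded above and below, construct a microlocal left-inverse whose kernel obeys $\la$-scaled Schur bounds, and absorb the $O(\la^{-1})$ error for $\la$ large. The one genuine difference is the bounded range $\la\le\la_0$: the paper invokes the exponential lower bound of Theorem \ref{thm1} together with the maximum principle, whereas you use compactness and unique continuation on the fixed finite-dimensional spectral subspace; both are valid.
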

\begin{proof}
 We may assume $\supp \beta\subset (\frac12,1)$.		Since $(\mathcal{H}f)(t,x)=P_tf(x)+R_tf(x)$ with $R_t\in OPS^{-\infty}$ by Lemma \ref{parametrix}, we shall estimate the norms $\|P_tf_{\la}\|_{L^p(M)}$ and $\|R_tf_{\la}\|_{L^p(M)}$. As before, we have
	\[\|R_tf_\la\|_{L^\infty(M)}\ls \la^{-N}\|f_\la\|_{L^1(M)},\ \forall N.\]
	Here we use the fact that $f_\la$ has spectral frequency bounded below.
	
	To handle $P_t$, we split $b(t,x,\xi)=b_0(t,x,\xi)+r(t,x,\xi)$ with $r(t,x,\xi)\in S^{-1}$. Let
	\begin{align*}
		r_tf(x)=(2\pi)^{-n}\int e^{ix\cdot \xi}r(t,x,\xi)\hat f(\xi)d\xi.
	\end{align*}
	Let $\beta_1\in C_0^\infty$ satisfy
	\[\1_{[\frac12,1]}\le \beta_1\le \1_{[\frac14,2]}.\]
	Clearly, $\beta_1\beta=\beta$.
	Since $t\le \la^{-1}$, we can obtain the kernel estimate
	\begin{align*}
		|r_t\beta_1(P/\la)(x,y)|\ls \la^{n-1}(1+\la|x-y|)^{-N},\ \forall N.
	\end{align*}
	Then for $1\le p\le \infty$ by Young's inequality we have
	\begin{align*}
		\|r_tf_\la\|_{L^p(M)}&=\|r_t\beta_1(P/\la)f_\la\|_{L^p(M)}\ls \la^{-1}\|f_\la\|_{L^p(M)}.
	\end{align*}
	Next, let
	$\beta_2\in C_0^\infty$ satisfy
	\[\1_{[\frac18,4]}\le \beta_2\le \1_{[\frac1{16},8]}.\]
	We split $b_0=\beta_2(|\xi|_{g(x)}/\la))b_0+(1-\beta_2(|\xi|_{g(x)}/\la)))b_0$. Let
	\begin{align*}
		\tilde R_tf(x)=(2\pi)^{-n}\int e^{ix\cdot \xi}(1-\beta_2(|\xi|_{g(x)}/\la))b_0(t,x,\xi)\hat f(\xi)d\xi.
	\end{align*}
	Since $\supp (1-\beta_2)$ is disjoint with $\supp\beta_1$, the operator $\tilde R_t \beta_1(P/\la)$ is smoothing by Lemma \ref{pdolemma}, and then
	\[\|\tilde R_t f_\la\|_{L^\infty(M)}=\|\tilde R_t \beta_1(P/\la)f_\la\|_{L^\infty(M)}\ls \la^{-N}\|f_\la\|_{L^1(M)}.\]
	Let 
\begin{equation}
	P_t^\la f(x)=(2\pi)^{-n}\int e^{ix\cdot \xi}b_0(t,x,\xi)\beta_2(|\xi|_{g(x)}/\la)\hat f(\xi)d\xi.
\end{equation}
	Notice that $b_0(t,x,\xi)$ behaves like an elliptic symbol in $S^0$ near $|\xi|_{g(x)} \sim \lambda$ when $t\le \la^{-1}$, one can construct a microlocal inverse of $P_t^\la$ as
\begin{equation}
	E_t^\la f(x)=(2\pi)^{-n}\int e^{ix\cdot \xi}b_0(t,x,\xi)^{-1}\beta_2(|\xi|_{g(x)}/\la)\hat f(\xi)d\xi.
\end{equation}
	Since $t\le \la^{-1}$, we have the kernel estimate
	\begin{align*}
		|E_t^\la(x,y)|\ls \la^n(1+\la|x-y|)^{-N}.
	\end{align*}
	Then by Young's inequality for $1\le p\le \infty$
	\[\|E_t^\la\|_{L^p(M)\to L^p(M)}\ls 1.\]
	Then we can write
	\begin{align*}
		E_t^\la P_t^\la f(x)
		&=f(x)+(2\pi)^{-n}\int e^{ix\cdot \xi}(1-\beta_2(|\xi|_{g(x)}/\la)^2)\hat f(\xi)d\xi+(2\pi)^{-n}\int e^{ix\cdot \xi}q_\la(t,x,\xi)\hat f(\xi)d\xi\\
		&:=f(x)+R_t^\la f(x)+Q_t^\la f(x)
	\end{align*}
	where the symbol $q_\la\in S^{-1}$ with $|\xi|\approx\la$. Then integration by parts gives  the kernel estimate
	\[|Q_t^\la(x,y)|\ls \la^{n-1}(1+\la|x-y|)^{-N},\ \forall N.\]
	By Young's inequality, this implies
\begin{equation}
	\|Q_t^\la\|_{L^p(M)\to L^p(M)}\ls \la^{-1}.
\end{equation}
	Since $R_t^\la\beta_1(P/\la)$ is smoothing, we have
	\[\|R_t^\la f_\la\|_{L^\infty(M)}=\|R_t^\la\beta_1(P/\la)f_\la\|_{L^\infty(M)}\ls \la^{-N}\|f_\la\|_{L^1(M)}.\]
	Thus, 
	\begin{align*}
		\|P_t^\la f_\la\|_{L^p(M)}&\gs \|E_t^\la P_t^\la f_\la\|_{L^p(M)}\\
		&\ge \|f_\la\|_{L^p(M)}+O(\la^{-1})\|f_\la\|_{L^p(M)}
	\end{align*}
	Combing the estimates above, we obtain \eqref{lowbd} for $\la$ larger than a constant $\la_0$. For $\la<\la_0$, we can apply the exponential lower bound in Theorem \ref{thm1} and the maximum principle, since $$\|f_\la\|_{L^1(M)}\approx \|f_\la\|_{L^2(M)}\approx \|f_\la\|_{L^{\infty}(M)}$$ in this case. 
	\end{proof}

	\noindent \textbf{Proof of Theorem \ref{thm3}.} Take $\delta_0 = \delta/2$, we split the $L^p$ norm into two parts:
			\[\|\mathcal{H}f_\la\|^p_{L^p(\Omega)} = \|\mathcal{H}f_\la\|^p_{L^p(\Omega_{\delta_0})} + \|\mathcal{H}f_\la\|^p_{L^p(\Omega_{\delta_0}^c)}.\]
		For the lower bound, we only need to consider the contribution of  the boundary part. If $\la \ge \delta_0^{-1}$, then by Lemma \ref{lplowbounds} and the co-area formula we have
		\begin{equation}\label{lowglobal}
			\begin{aligned}
				\int_{\Omega_{\delta_0}^c} |\mathcal{H}f_\la|^p &= \int_{0}^{\delta_0} \int_{\Sigma_t} |\mathcal{H}f_\la|^p d\omega_tdt\\
				&\gs \int_{0}^{\la^{-1}}dt \|f_\la\|_{L^p(M)}^p\\
				&\gs \la^{-1}\|f_\la\|_{L^p(M)}^p.
			\end{aligned}
		\end{equation}
		The case that $\la<\delta_0^{-1}$ can be handled similarly since $\delta_0$ is a constant. For the upper bound,  the interior part follows from Theorem \ref{thm2} and the maximum principle, since $$\|\mathcal{H}f_\la\|_{L^\infty(\Omega_{\delta_0})} \ls \la^{-N}\|f_\la\|_{L^1(M)}, \ \forall N.$$ The boundary part follows from Theorem \ref{thm2} and the co-area formula, since for $1\le p\le \infty$
		\[\|\mathcal{H}f_\la\|_{L^p(\Sigma_t)}\ls (e^{-c\la t}+\la^{-N})\|f_\la\|_{L^p(M)},\ \forall N.\]

	\section{Applications to Steklov eigenfunctions and numerical approximation}

		Over the past two decades, efficient spectral methods  have been developed in the field of computational  mathematics, for the approximation of solutions to Laplace’s equations using Steklov eigenfunctions. See e.g., Auchmuty \cite{A04,A06,A17,A18,A182}, Auchmuty-Cho \cite{AC15,AC17}, Auchmuty-Rivas \cite{AR16}, Cho \cite{Cho20}, Cho-Rivas \cite{CR22}, Klouček-Sorensen-Wightman \cite{KSW07}. Among the key problems in this area are the error estimates for approximations and the orthogonality properties of the Steklov eigenfunctions. In this section, we shall use the results established in the preceding sections to address these issues.
		
		We first prove the almost-orthogonality in Theorem \ref{thm4}.
	
		%\noindent \textbf{}
	\noindent \textbf{Proof of Theorem \ref{thm4}.}
			Suppose $\mu\ge\la$. If $\mu\le 10$ or $|\la-\mu|\le 10$, \eqref{inp} just follows from Cauchy-Schwarz, Lemma \ref{intkernel} and Theorem \ref{thm2}. In the case of $\mu\ge 10$ and $\la \le 10$, since the derivatives of the kernel of $\chi(P-\la)$ are bounded by constants independent with $\la$, we have 
			$$\langle v_{\la}, w_{\mu} \rangle_\Omega=O(\mu^{-N})\|f\|_{L^2(M)}\|g\|_{L^2(M)},\ \forall N.$$ 
			Now suppose $\la\ge 10$ and $|\la-\mu|>10$. Let $\gamma=\la/\mu\in (0,1)$. As before, we only need to handle the contribution near boundary. It suffices to show that on each slice $\Sigma_t$
		\begin{equation}
			|\langle v_{\la}(t), w_{\mu}(t) \rangle_{\Sigma_t}| \ls e^{-ct(\la+\mu)}(1+|\la-\mu|)^{-N}\|f\|_{L^2(M)}\|g\|_{L^2(M)},\ \ \forall N,
		\end{equation}
			and then integration with respect to $t$ gives the desired bound.
			By Lemma \ref{intkernel} and Theorem \ref{thm2}, it suffices to handle the kernel
			\[B_{\la,\mu}(y,z)=(\la\mu)^{\frac{n-1}2}\int e^{i\mu(\gamma d_g(x,y)- d_g(x,z))}a_\la(x,y)\overline{a_\mu(x,z)}dx\]	since the contributions from other terms are $O(\mu^{-N})$.
			By Cauchy-Schwarz, we only  need to prove	\begin{equation}\label{inner}
				\iint |B_{\la,\mu}(y,z)|^2dydz\ls (1+|\la-\mu|)^{-N},\ \forall N,
			\end{equation}
			Fix any $\eps\in (0,\frac1{100})$. Let $\nabla$ be the covariant derivative with respect to the metric $g$. Then we have $|\nabla_x d(x, y)|_{g(x)}=1$. For convenience we take geodesic normal coordinates at $x$. If $\gamma< 1-\eps$, then
			\[|\nabla_x(\gamma d_g(x,y)- d_g(x,z))|\ge \eps. \]Then integration by parts gives $B_{\la,\mu}(y,z)=O(\mu^{-N})$.

			Suppose $1-\eps\le \gamma \le 1$. If $|\nabla_xd_g(x,y)-\nabla_x d_g(x,z)|\ge 2\eps$, then 
			\[|\nabla_x(\gamma d_g(x,y)- d_g(x,z))|\ge |\nabla_x( d_g(x,y)- d_g(x,z))|-(1-\gamma)\ge \eps.\]
			Then integration by parts gives $B_{\la,\mu}(y,z)=O(\mu^{-N})$.
			Suppose  $|\nabla_xd_g(x,y)-\nabla_x d_g(x,z)|<2\eps$. We may assume that these two unit vectors are close to $e_1=(1,0,...,0)$. Then $$|\nabla_xd_g(x,y)-\nabla_x d_g(x,z)|\gs |y'-z'|$$ if $y=(y_1,y')$ and $z=(z_1,z')$. So we have 
			$$|\nabla_x(\gamma d_g(x,y)- d_g(x,z))| \gs  (1-\gamma) + |y'-z'|.$$
			Then integration by parts gives
			$$|B_{\la,\mu}(y,z)|\ls \mu^{n-1}  (1+|\la-\mu|)^{-N}(1+\mu|y'-z'|)^{-N},\ \forall N.$$
			Then a direct calculation gives \eqref{inner}.

	\noindent \textbf{Proof Sketch of Theorems \ref{thm5} and \ref{thm6}.} The proofs of Theorems \ref{thm5} and \ref{thm6} are similar, so we only give a proof sketch. We can reduce the estimates in $\Omega$ to the bounds on each slice $\Sigma_t$, which follow from Lemma \ref{intkernel} and the arguments in \cite{BGT} and \cite{BGTbi, BGTbi2}. The power gains in Theorems \ref{thm5} and \ref{thm6} come from the integration of the exponential decay factor in the kernel formula in Lemma \ref{intkernel}.

		 Now we shall discuss the numerical approximation problems of harmonic functions with various boundary conditions. Klouček-Sorensen-Wightman \cite[Section 4.3]{KSW07} and Auchmuty-Cho \cite[Sections 4 and 6]{AC17} have obtained some basic error estimates and provided accompanying numerics. As an application of our results, we provide both $L^2$ and pointwise error estimates. For $x\in \Omega$, let $$d(x) = \dist(x,\partial \Omega)=\dist(x,M).$$
		 Let $\{e_j\}_{j=0}^\infty$ be the  orthonormal DtN eigenbasis in $L^2(M)$ associated with the Steklov eigenvalues $$0=\la_0< \la_1\le \la_2\le ...,$$
		 which are arranged in increasing order and we account for multiplicity. The first eigenvalue $\la_0=0$ is simple since $\Omega$ is connected.
		 Let $u_j=\mathcal{H}e_j$ be the associated Steklov eigenfunctions.

		\begin{theorem}\label{errorboundsD}
		Let $u$ be the solution to Laplace equation \eqref{lapdirichlet} with Dirichlet boundary condition. 	Let $k\in \mathbb{N}_+$, $\la = \la_{k+1}$, and $c_j = \langle f,e_{j} \rangle_M$. Let $\tilde f_k = \sum_{j \le k} c_j e_{j}$ and $\tilde u_k = \sum_{j\le k} c_j u_{j}$. Then we have the $L^2$ error estimates
			\begin{equation}\label{2to2}
				\|u-\tilde u_k\|^2_{L^2(\Omega)} \ls \la^{-1}\Big(\|f\|^2_{L^2(M)}-\|\tilde f_k\|^2_{L^2(M)}\Big )
			\end{equation}
			and the pointwise error estimates
			\begin{equation}\label{2toinf}
				|u(x)-\tilde u_k(x)|^2 \ls \Big((\la+ d(x)^{-1})^{n}e^{-\la d(x)}+\la^{-N}\Big)\Big(\|f\|^2_{L^2(M)}-\|\tilde f_k\|^2_{L^2(M)}\Big),\ \ \forall  N.
			\end{equation}
		\end{theorem}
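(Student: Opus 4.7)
The key identity is $u - \tilde u_k = \mathcal{H}g$ with $g := f - \tilde f_k = \sum_{j > k} c_j e_j$, and $g$ satisfies the spectral frequency condition $g = \mathbf{1}_{\ge\lambda}(\mathcal{N})g$ because $\lambda_j \ge \lambda_{k+1} = \lambda$ for every $j > k$. Orthonormality of $\{e_j\}$ gives $\|g\|_{L^2(M)}^2 = \sum_{j>k}|c_j|^2 = \|f\|_{L^2(M)}^2 - \|\tilde f_k\|_{L^2(M)}^2$. For the $L^2$ estimate \eqref{2to2} I would simply apply the interior $L^p$ bound \eqref{intLpup} (a direct consequence of Theorem \ref{thm2} via integration in $t$) with $P=\mathcal{N}$ and $p=2$ to $g$, obtaining $\|\mathcal{H}g\|_{L^2(\Omega)} \ls \lambda^{-1/2}\|g\|_{L^2(M)}$, and then squaring.

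For the pointwise bound \eqref{2toinf} I would proceed by duality. Let $k_p(y)$ denote the Poisson kernel at $p \in \Omega$ so that $(\mathcal{H}g)(p) = \langle g, k_p\rangle_M$ and $\langle k_p, e_j\rangle_M = u_j(p)$. Since $\mathbf{1}_{\ge\lambda}(\mathcal{N})$ is self-adjoint, Cauchy--Schwarz gives
\[
|(u-\tilde u_k)(p)|^2 = |\langle g,\, \mathbf{1}_{\ge\lambda}(\mathcal{N})k_p\rangle_M|^2 \le \|g\|_{L^2(M)}^2\cdot S_\lambda(p),
\]
where $S_\lambda(p) := \sum_{\lambda_j \ge \lambda}|u_j(p)|^2$. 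The task therefore reduces to proving the spectral function bound
\begin{equation}\label{planspec}
S_\lambda(p) \ls \bigl(\lambda + d(p)^{-1}\bigr)^n e^{-c\lambda d(p)} + \lambda^{-N}, \qquad \forall N.
\end{equation}

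To prove \eqref{planspec}, write $t = d(p)$ and assume $p$ lies in a Fermi-coordinate collar near $\partial\Omega$ (for $p$ away from the boundary the estimate follows from Corollary \ref{smoothdecay} combined with a Weyl-law count in $\lambda_j$). Fix a Schwartz $\chi$ with $\chi(0)=1$ and $\hat\chi$ compactly supported away from zero (as before Theorem \ref{thm4}), and choose $\delta>0$ small enough that $|\chi(\xi)|^2 \ge 1/2$ on $|\xi|\le\delta$. Setting $\mu_m = \lambda + m\delta$, continuity yields $\mathbf{1}_{[\lambda,\infty)}(\tau) \le 2\sum_{m\ge 0}|\chi(\tau-\mu_m)|^2$, so by orthonormality of $\{e_j\}$,
\[
S_\lambda(p) \ls \sum_{m\ge 0}\sum_j |\chi(\lambda_j - \mu_m)|^2|u_j(p)|^2 = \sum_{m\ge 0}\int_M |K_m(p,y)|^2\,dy,
\]
where $K_m$ is the kernel of $\mathcal{H}\circ\chi(\mathcal{N}-\mu_m)$ evaluated at $p$. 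Lemma \ref{intkernel} at frequency $\mu_m$ yields $|K_m(p,y)| \ls \mu_m^{(n-1)/2}e^{-c\mu_m G(t)}$ on a region of volume $O(1)$, plus a smooth remainder of size $O(\mu_m^{-N})$, whence $\int_M |K_m|^2\,dy \ls \mu_m^{n-1}e^{-2c\mu_m G(t)} + \mu_m^{-2N}$. Summing in $m$ by the integral comparison $\sum_m (\lambda+m\delta)^{n-1}e^{-2c(\lambda+m\delta)t} \approx \int_\lambda^\infty s^{n-1}e^{-2cst}\,ds$, and splitting into the regimes $\lambda t \ge 1$ (where the integral is $\approx \lambda^{n-1}t^{-1}e^{-2c\lambda t}\le \lambda^n e^{-c\lambda t}$ using $1/t\le\lambda$) and $\lambda t < 1$ (where it is $\approx t^{-n}$), produces the bound $(\lambda + t^{-1})^n e^{-c\lambda t}$; the remainder sum contributes $O(\lambda^{-N})$. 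This yields \eqref{planspec}.

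The main obstacle is the last step: extracting the clean polynomial factor $(\lambda + d(p)^{-1})^n$ from the spectral sum requires carefully balancing the Sogge-type prefactor $\mu^{(n-1)/2}$ from Lemma \ref{intkernel} against the exponential damping $e^{-c\mu d(p)}$ and the Weyl density of DtN eigenvalues, and the two regimes $\lambda d(p)\ge 1$ and $\lambda d(p) < 1$ must be handled separately before merging them into a single uniform bound.
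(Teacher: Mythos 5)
Your $L^2$ argument is exactly the paper's: observe that $g:=f-\tilde f_k=\sum_{j>k}c_je_j$ satisfies $g=\1_{\ge\la}(\mathcal{N})g$ with $\|g\|^2_{L^2(M)}=\|f\|^2-\|\tilde f_k\|^2$, and apply \eqref{intLpup} with $p=2$.

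For the pointwise bound your route is correct but genuinely different in its organization. The paper expands $u-\tilde u_k=\sum_{\la_j\ge\la}c_ju_j$ into dyadic blocks $\sum_{\la_j\approx 2^\ell}c_ju_j$, bounds each block in $L^\infty(\Sigma_t)$ by $(e^{-cd(x)2^\ell}2^{n\ell/2}+2^{-N\ell})(\sum_{\la_j\approx 2^\ell}|c_j|^2)^{1/2}$ using Theorem~\ref{thm2} together with H\"ormander/Sogge's $L^2\to L^\infty$ bound \eqref{soggelp} (the $2^{n\ell/2}$ comes from the $\approx 2^\ell$ unit bands inside the dyadic block), and then applies Cauchy--Schwarz on the outer sum in $\ell$. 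You instead dualize first, reducing to the spectral-function bound $S_\la(p)=\sum_{\la_j\ge\la}|u_j(p)|^2$, and then cover $[\la,\infty)$ by unit translates of $\chi$ and control each $\int_M|K_m(p,\cdot)|^2$ via Lemma~\ref{intkernel}. The two derivations are dual formulations of the same estimate --- both are effectively computing the $L^2_y$-norm of the kernel of $\mathcal{H}\circ\1_{\ge\la}(\mathcal{N})$ at the fixed interior point $p$ --- and the band-by-band inputs (the $\mu^{(n-1)/2}$ prefactor and the $e^{-c\mu G(t)}$ decay) are identical; your version isolates a clean intermediate object $S_\la(p)$ at the cost of a slightly more elaborate partition-of-unity setup, while the paper's version is shorter because it reuses the already-stated $L^\infty$ corollaries. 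One shared caveat: both you and the paper produce $e^{-c\la d(x)}$ with some fixed $0<c<1$ inherited from Theorem~\ref{thm2}, not the bare $e^{-\la d(x)}$ in the statement of \eqref{2toinf}; the paper's ``direct calculation'' glosses over this just as your write-up does, so this is an imprecision in the theorem statement rather than a gap in your argument.
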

		The bound \eqref{2to2} immediately follows from \eqref{intLpup}, since
		\[\|f\|^2_{L^2(M)}-\|\tilde f_k\|^2_{L^2(M)}=\sum_{\la_j\ge\la}|c_j|^2=\|f-\tilde f_k\|_{L^2(M)}^2.\] To get \eqref{2toinf}, we use Theorem \ref{thm2} and H\"ormander's  $L^\infty$ bound \eqref{soggelp}. Indeed, for $0<c<\frac34$ we have
		\begin{align*}
			|u(x)-\tilde u_k(x)|&\le \sum_{2^\ell\ge \la}|\sum_{\la_j\approx 2^\ell} c_j u_j|\\
			&\ls \sum_{2^\ell\ge \la}(e^{-cd(x) 2^\ell }2^{\frac{n}2\ell}+2^{-N\ell})(\sum_{\la_j\approx 2^\ell}|c_j|^2)^\frac12\\
			&\ls \Big(\sum_{2^\ell\ge \la}(e^{-2cd(x) 2^\ell }2^{n\ell}+2^{-2N\ell})\Big)^\frac12(\sum_{\la_j\ge\la}|c_j|^2)^\frac12.
		\end{align*}
		Then a direct calculation gives \eqref{2toinf}.
		%The $L^2$ error estimates in \eqref{2to2} is sharp by . 
		
		%For the pointwise estimate \eqref{2toinf}. Let $\Omega$ be the unit ball in $\mathbb{R}^{n+1}$. By checking the Zonal functions with frequency $\sim d(x)^{-1}$, one can see that the term $d(x)^{-\frac{n-1}{2}}$ in the \eqref{2toinf} is also sharp.
		
		Next, let $b\ge0$ be a fixed number and we consider the Laplace equation with Neumann ($b=0$) or Robin ($b>0$) boundary condition
		\begin{equation}\label{lapNR}
			\begin{cases}
				\Delta u(x)=0,\ x\in\Omega,\\
				\partial_\nu u(x) + bu(x) = f(x),\ x\in \partial \Omega=M.
			\end{cases}
		\end{equation}
		\begin{theorem}\label{errorboundsNR}
		Let $u$ be the solution to Laplace equation \eqref{lapNR} with Neumann or Robin boundary condition.  Let $k\in \mathbb{N}_+$, $\la = \la_{k+1}$, and $c_j = \langle f,e_{j} \rangle_M$. Let $\tilde f_k = \sum_{ j \le k} c_j e_{j}.$ When $b=0$, let  $$\tilde u_k = \sum_{1\le j\le k} \frac{c_j}{\la_j} u_{j}.$$ When $b>0$, let $$  \tilde u_k = \sum_{j\le k} \frac{c_j}{\la_j+b} u_{j}.$$  Then we have the $L^2$ error estimates
		\begin{equation}\label{2to2NR}
			\|u-\tilde u_k\|^2_{L^2(\Omega)} \ls \la^{-3}\Big(\|f\|^2_{L^2(M)}-\|\tilde f_k\|^2_{L^2(M)}\Big ).
		\end{equation}
		and the pointwise error estimates
		\begin{equation}\label{2toinfNR}
			|u(x)-\tilde u_k(x)|^2 \ls \Big((\la+d(x)^{-1})^{n-2}e^{-\la d(x)}+\la^{-N}\Big)\Big(\|f\|^2_{L^2(M)}-\|\tilde f_k\|^2_{L^2(M)}\Big), \ \ \forall N.
		\end{equation}
	\end{theorem}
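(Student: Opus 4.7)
The plan is to imitate the proof of Theorem \ref{errorboundsD}, taking advantage of the fact that, for Robin/Neumann data, the Steklov expansion coefficients of $u$ are damped by an extra factor $1/(\lambda_j+b)$.

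\textbf{Step 1: Identify $u-\tilde u_k$ as a harmonic extension.} Since $\partial_\nu u_j = \mathcal{N}e_j = \lambda_j e_j$ on $M$, each Steklov eigenfunction satisfies $\partial_\nu u_j + b u_j = (\lambda_j+b)e_j$ on $M$. Expanding $u=\sum_j \alpha_j u_j$ in the Steklov basis and matching boundary conditions forces $\alpha_j = c_j/(\lambda_j+b)$; in the pure Neumann case ($b=0$), compatibility gives $c_0=0$, so the $j=0$ mode is immaterial. Hence
\[
u-\tilde u_k \;=\; \mathcal{H}g, \qquad g \;=\; \sum_{j>k} \frac{c_j}{\lambda_j+b}\, e_j,
\]
and $g$ has spectral frequency $\ge \lambda=\lambda_{k+1}$.

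\textbf{Step 2: The $L^2$ bound.} Applying the interior estimate \eqref{intLpup} (a direct consequence of Theorem \ref{thm2}) with $p=2$ to $g$, and using orthogonality of $\{e_j\}$,
\[
\|u-\tilde u_k\|_{L^2(\Omega)}^2 \;=\; \|\mathcal{H}g\|_{L^2(\Omega)}^2 \;\lesssim\; \lambda^{-1}\|g\|_{L^2(M)}^2 \;=\; \lambda^{-1}\sum_{j>k}\frac{|c_j|^2}{(\lambda_j+b)^2}\;\le\; \lambda^{-3}\sum_{j>k}|c_j|^2,
\]
which is \eqref{2to2NR}.

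\textbf{Step 3: The pointwise bound.} Decompose $g=\sum_{2^\ell \ge \lambda} g_\ell$ dyadically in frequency, where $g_\ell$ is supported in the spectral annulus $\lambda_j\approx 2^\ell$. Following the argument used for \eqref{2toinf} — Corollary \ref{smoothdecay} combined with Sogge's $L^2\to L^\infty$ bound \eqref{soggelp} — I get, for each $\ell$,
\[
\|\mathcal{H}g_\ell\|_{L^\infty(\Sigma_t)} \;\lesssim\; \bigl(2^{\ell n/2} e^{-c\,2^\ell t} + 2^{-N\ell}\bigr)\|g_\ell\|_{L^2(M)}.
\]
The crucial difference from the Dirichlet setting is that now $\|g_\ell\|_{L^2(M)}^2 \lesssim 2^{-2\ell}\sum_{\lambda_j\approx 2^\ell}|c_j|^2$, i.e.\ we gain two powers of $2^{-\ell}$. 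Summing over $\ell$ via Cauchy--Schwarz yields
\[
|u(x)-\tilde u_k(x)|^2 \;\lesssim\; \Big(\sum_{2^\ell\ge \lambda} 2^{\ell(n-2)} e^{-2c\,2^\ell d(x)}+\lambda^{-N}\Big)\sum_{j>k}|c_j|^2,
\]
and a routine dyadic splitting of the geometric-type sum (treating the regimes $2^\ell d(x)\lesssim 1$ and $2^\ell d(x)\gtrsim 1$ separately) bounds it by $(\lambda+d(x)^{-1})^{n-2}e^{-c'\lambda d(x)}$, proving \eqref{2toinfNR}.

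\textbf{Main obstacle.} The analytic estimates of Steps 2 and 3 are routine adaptations of the Dirichlet proof, with the $\lambda_j^{-2}$ damping trimming the polynomial factor from $n$ to $n-2$. The only genuine subtlety lies in Step 1: making sure that the Steklov system $\{u_j\}_{j\ge 0}$ provides a legitimate expansion for the Robin/Neumann harmonic extension (equivalently, that $(\mathcal{N}+b)^{-1}$ acts diagonally on the $e_j$ for $j$ in the appropriate range), and handling the $b=0$ zero-mode via the compatibility $\int_M f=0$.
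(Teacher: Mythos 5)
Your proof is correct and follows essentially the same route as the paper: you derive the damping $\langle u,e_j\rangle_M = c_j/(\lambda_j+b)$ from the boundary condition (the paper does this via self-adjointness of $\mathcal{N}$: $c_j=\langle\partial_\nu u+bu,e_j\rangle_M=(\lambda_j+b)\langle u,e_j\rangle_M$, which is equivalent to your boundary-matching argument), then reuse the $L^2$ bound \eqref{intLpup} and the dyadic $L^\infty$ argument from Theorem~\ref{errorboundsD}, with the extra factor $(\lambda_j+b)^{-2}$ trading the exponents $\lambda^{-1}\mapsto\lambda^{-3}$ and $n\mapsto n-2$. Your explicit carrying-out of the dyadic summation and your handling of the Neumann zero mode are both consistent with what the paper leaves implicit.
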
	
		These estimates immediately follow from the same proof as Theorem \ref{errorboundsD} and the observation that
\begin{equation}
	c_j=\langle f,e_{j} \rangle_M=\langle \partial_\nu u+b u,e_{j} \rangle_M=(\la_j+b)\langle u,e_{j} \rangle_M.
\end{equation}
	
	%	\section{Further Discussions}
		
	%	Provide some topics and open problems.
		
	%	\begin{problem}
	%		Upper bound for smooth manifolds?
	%	\end{problem}
	%	If we want to adapt the proof of Theorem \ref{thm1} to prove some kind of exponential upper bound, the key is to show an estimates like
	%	$$\frac{\int_{\Sigma_t} |\partial_t u|^2 \int_{\Sigma_t} u^2 - (\int_{\Sigma_t} u\partial_t u)^2}{(\int_{\Sigma_t} u^2)^2}\ls F(t) \frac{\int_{\Sigma_t} %u\partial_t u}{\int_{\Sigma_t} u^2},$$
	%	However
		
	%	\begin{problem}results for Calderon problem,for example distinguishablity or instability?
	%	\end{problem}

		\section{Appendix: Proof of Lemma \ref{pdolemma}}
		We prove Lemma \ref{pdolemma} in the appendix. We essentially follow the strategy in the proof of \cite[Theorem 4.3.1]{SoggeFIO}. But Lemma \ref{pdolemma} is more precise than \cite[Theorem 4.3.1]{SoggeFIO}, and we need to carefully handle the dependence on $\la$. So we shall give a detailed proof for completeness.
		
		Let $\mu_1=\max\{\mu,0\}$.	We fix $\rho\in \mathcal{S}(\mathbb{R})$ with $\hat \rho$ supported in $\{|t|<1\}$ and equal to 1 in $\{|t|<\frac12\}$. 
		We decompose
		\begin{align*}
			m_\la(P)&=(2\pi)^{-1}\int \hat\rho(t)\hat m_\la(t)e^{itP}dt+(2\pi)^{-1}\int (1-\hat\rho(t))\hat m_\la(t)e^{itP}dt\\
			&:=\tilde m_\la(P)+r_\la(P).
		\end{align*}
		Here $r_\la(P)$  is a smoothing operator, since
		\begin{align*}
			|r_\la(\tau)|&=|(2\pi)^{-1}\int (1-\hat\rho(t))\hat m_\la(t)e^{it\tau}dt|\\
			&=|(2\pi)^{-1}\iint (1-\hat\rho(t))m_\la(s)e^{-its}e^{it\tau}dsdt|\\
			&=C_N|\iint (1-\hat\rho(t)) t^{-N}m_\la^{(N)}(s)e^{-its}e^{it\tau}dsdt|\\
			&\ls\int (1+|s-\tau|)^{-N_1}|m_\la^{(N)}(s)|ds\\
			&\ls \int (1+|s-\tau|)^{-N_1}(1+|s|)^{\mu_1-N}ds\\
			&\ls (1+\tau)^{\mu_1-N}
		\end{align*}
		and its derivatives also satisfy similar bounds.
		
		Without loss of generality, we assume the parametrix for half-wave kernel (\cite[Theorem 4.1.2]{SoggeFIO}) holds for $|t|\le 1$, that is
		\[e^{itP}(x,y)= Q(t,x,y)+R(t,x,y)\]
		where
		\[ Q(t,x,y)=(2\pi)^{-n}\int e^{i\varphi(x,y,\xi)+itp(y,\xi)}q(t,x,y,\xi)d\xi\]and $R(t,x,y)\in C^\infty([-1,1]\times M\times M)$ and $Q(t,x,y)$ is supported in a small neighborhood of the diagonal in $M\times M$. Moreover, $\varphi\in S^1$ and satisfies
		\[\varphi(x,y,\xi)=(x-y)\cdot\xi +O(|x-y|^2|\xi|)\]and
		\[q(0,x,x,\xi)-1\in S^{-1}.\]
		The integral of $R(t)$ is a smoothing operator since
		\begin{align*}
			\Big|\int \hat\rho(t)\hat m_\la(t)R(t,x,y)dt\Big|&=\Big|\iint  m_\la(s)\hat\rho(t)R(t)e^{-its}dtds\Big|\\
			&\ls \int |m(s/\la)|(1+|s|)^{-N}ds\\
			&\ls 1
		\end{align*}and its derivatives also satisfy similar bounds.
		
		Next, we consider the integral of $Q(t,x,y)$.
		By Taylor expansion, we have
		\[q(t)=q(0)+tq'(0)+...+\frac{t^N}{N!}q^{(N)}(0)+\frac{t^{N+1}}{N!}\int_0^1q^{(N+1)}(\theta t)(1-\theta)^Nd\theta.\]
		
		For the remainder term, it contributes the kernel
		\begin{equation}\label{err}
			\int_0^1\iint \hat\rho(t)\hat m_\la(t)e^{i\varphi(x,y,\xi)+itp(y,\xi)}t^{N+1}q^{(N+1)}(\theta t,x,y,\xi) d\xi dt(1-\theta)^Nd\theta.
		\end{equation}
		Let
		\begin{align*}
			E_{\la,\theta}(x,y,\xi)=	\int \hat\rho(t)\hat m_\la(t)e^{itp(y,\xi)}t^{N+1}q^{(N+1)}(\theta t,x,y,\xi) dt
		\end{align*}
		Since $p(y,\xi)\approx |\xi|$ and $(\hat \rho q^{(N+1)}(\theta\cdot))^\vee\in\mathcal{S}$, we have
		\begin{align*}
			|E_{\la,\theta}(x,y,\xi)|&\approx  \Big|\int \hat\rho(t) (m_\la^{(N+1)})^\wedge(t)e^{itp(y,\xi)}q^{(N+1)}(\theta t,x,y,\xi) dt\Big|\\
			&\approx | m_\la^{(N+1)}*(\hat \rho q^{(N+1)}(\theta\cdot))^\vee(p(y,\xi))|\\
			&\ls \int \la^{-\mu}(\la+|\tau|)^{\mu-N-1}(1+|p(y,\xi)-\tau|)^{-N_1}d\tau\\
			&\ls (1+|\xi|)^{\mu_1-N-1}
		\end{align*}
		Similarly, 	\begin{align*}
			|\partial_{x,y}^\beta \partial_\xi^\alpha E_{\la,\theta}(x,y,\xi)|\ls (1+|\xi|)^{\mu_1-N-1-|\alpha|},\ \ \forall \alpha.
		\end{align*}
		Thus, \eqref{err} is the kernel of an operator in $OPS^{\mu_1-N-1}$ by \cite[Theorem 3.2.1]{SoggeFIO}.
		
		For $\ell=0,1,2,...N,$ we have
		\begin{align*}
			&\iint \hat\rho(t)\hat m_\la(t)e^{i\varphi(x,y,\xi)+itp(y,\xi)}t^\ell q^{(\ell)}(0,x,y,\xi)d\xi dt\\
			&=\int \tilde m_\la^{(\ell)}(p(y,\xi))q^{(\ell)}(0,x,y,\xi)e^{i\varphi(x,y,\xi)}d\xi\\
			&=\int  m_\la^{(\ell)}(p(y,\xi))q^{(\ell)}(0,x,y,\xi)e^{i\varphi(x,y,\xi)}d\xi+R_{\ell,\la}(x,y)
		\end{align*}
		where 
		\begin{align*}
			|R_{\ell,\la}(x,y)|&=\Big|\int r_\la^{(\ell)}(p(y,\xi))q^{(\ell)}(0,x,y,\xi)e^{i\varphi(x,y,\xi)}d\xi\Big|\\
			&\ls \int (1+|\xi|)^{-N}d\xi\\
			&\ls 1
		\end{align*}
		since $m_\la-\tilde m_\la=r_\la\in \mathcal{S}$. Its  derivatives also satisfy similar bounds.
		
		Let $\varphi_0=\varphi$ and $\varphi_1=(x-y)\cdot\xi$ and $\varphi_t=(1-t)\varphi_0+t\varphi_1$ with $t\in [0,1]$. Let $P(x,y,\xi)=m_\la^{(\ell)}(p(y,\xi))q^{(\ell)}(0,x,y,\xi)$. Let
		\[(P_t u)(x)=(2\pi)^{-n}\iint e^{i\varphi_t(x,y,\xi)}P(x,y,\xi)u(y)d\xi dy.\]
		Since  \begin{equation}\label{phicond}|\partial_\xi^\alpha(\varphi_0-\varphi_1)|\ls |x-y|^2|\xi|^{1-|\alpha|},\ \forall\alpha,\end{equation}
		we have
		\[|\nabla_\xi \varphi_t|\gs |x-y|\]
		near the diagonal. 
		
		By \eqref{phicond}, we can define  $\phi(x,y,\xi)=(\varphi_1-\varphi_0)/|x-y|^2\in S^1$. So 
		\begin{align*}
			\partial_t^jP_tu&=(2\pi)^{-n}\iint e^{i\varphi_t}(i(\varphi_1-\varphi_0))^jP(x,y,\xi)u(y)d\xi dy
		\end{align*}
		and then
		\begin{align*}
			\partial_t^j|_{t=1}P_tu&=(2\pi)^{-n}\iint e^{i(x-y)\cdot\xi}(i(\varphi_1-\varphi_0))^jP(x,y,\xi)u(y)d\xi dy\\
			&=(2\pi)^{-n}(-1)^j\iint e^{i(x-y)\cdot\xi}\Delta_\xi^j\Big((i\phi(x,y,\xi))^jP(x,y,\xi)\Big)u(y)d\xi dy\\
			&:=(2\pi)^{-n}(-1)^j\iint e^{i(x-y)\cdot\xi}Q_j(x,y,\xi)u(y)d\xi dy
		\end{align*}
		By Taylor formula we have
		\[Q_j(x,y,\xi)=\sum_{|\alpha|\le N}\frac1{\alpha!}\partial_y^\alpha|_{y=x} Q(x,y,\xi)\cdot (y-x)^\alpha+R_N(x,y,\xi)\]
		where $R_N$ vanishes of order $N+1$ when $x=y$. For the remainder term, we use the reproducing operator (see \cite[Lemma 4.1.1]{hangzhou}) $$L e^{i\phi}=e^{i\phi},\ \ \text{if}\ L=\frac1{i}\frac{\nabla\phi}{|\nabla\phi|^2}\cdot\nabla$$ to rewrite the kernel
		\begin{align*}
			\int e^{i(x-y)\cdot\xi}R_N(x,y,\xi)d\xi&=\int e^{i(x-y)\cdot\xi}(L^*)^{N+1}R_N(x,y,\xi)d\xi
		\end{align*}
		Since that the adjoint of $L$ is $$L^*f=i\frac{(x-y)\cdot \nabla_\xi f}{|x-y|^2}$$
		we have 
		\[|(L^*)^{N+1}R_N(x,y,\xi)|\ls (1+|\xi|)^{\mu_1-\ell -N-1}\]
		Similarly,
		\[|\partial_{x,y}^\beta\partial_\xi^\alpha(L^*)^{N+1}R_N(x,y,\xi)|\ls (1+|\xi|)^{\mu_1-\ell -N-1-|\alpha|},\ \ \forall \alpha,\beta.\]
		So the remainder term is an operator in $OPS^{\mu_1-\ell -N-1}$.
		Thus, by \cite[(3.1.11)]{SoggeFIO} the operator  $\partial_t^j|_{t=1}P_t$ is a pseudo-differential operator with symbol 
		\[Q_j(x,\xi)\sim (-1)^j\sum_\alpha \frac1{\alpha!}D_\xi^\alpha\partial_y^\alpha|_{y=x} Q_j(x,y,\xi)\]
		modulo a smoothing operator. Then by Taylor formula
		\[P_0=\sum_{j=0}^N\frac{(-1)^j}{j!}\partial_t^j|_{t=1}P_t+\frac{(-1)^{N-1}}{N!}\int_0^1t^N\partial_t^{N+1}P_t dt\]
		For the remainder term, we use the reproducing operator $L e^{i\varphi_t}=e^{i\varphi_t}$ and  rewrite the kernel 
		\begin{align*}
			\int e^{i\varphi_t}(i(\varphi_1-\varphi_0))^{N+1}P(x,y,\xi)d\xi&=\int e^{i\varphi_t}(L^*)^{2N+2}((i(\varphi_1-\varphi_0))^{N+1}P(x,y,\xi))d\xi\\
			&:=\int e^{i\varphi_t}E_{N}(x,y,\xi)d\xi
		\end{align*}
		Since the adjoint of $L$ is $$L^*f=i\sum_j\partial_{\xi_j}(\frac{\partial_{\xi_j}\varphi_t}{|\nabla_\xi\varphi_t|^2}\cdot f)$$
		we have the symbol
		\begin{align*}
			|E_{N}(x,y,\xi)|\ls (1+|\xi|)^{\mu_1-\ell-N-1}.
		\end{align*}
		Similarly,
		\[	|\partial_{x,y}^\beta\partial_\xi^\alpha E_{N}(x,y,\xi)|\ls (1+|\xi|)^{\mu_1-\ell -N-1-|\alpha|},\ \ \forall \alpha,\beta.\]
		So the remainder term is an operator in $OPS^{\mu_1-\ell -N-1}$ by \cite[Theorem 3.2.1]{SoggeFIO}. Thus, $P_0$ is a pseudo-differential operator with symbol given by the formal series
		\[Q(x,\xi)\sim \sum_{j=0}^\infty\frac{1}{j!}Q_j(x,\xi)\]
		modulo a smoothing operator. 
		
		Thus, $m_\la(P)$ is a peeudo-differential operator with symbol given by the formal series
		\[M_\la(x,\xi)\sim \sum_{\ell=0}^\infty\sum_{j=0}^\infty\sum_\alpha\frac1{\ell!j!\alpha!}D_\xi^\alpha\partial_y^\alpha|_{y=x}\Delta_\xi^j\Big((i(\phi(x,y,\xi))^jm_\la^{(\ell)}(p(y,\xi))q^{(\ell)}(0,x,y,\xi)\Big)\]
		modulo a smoothing operator. Here $q^{(\ell)}(0)\in S^0$ and  $$\phi(x,y,\xi)=(\varphi_1-\varphi_0)/|x-y|^2\in S^1.$$
		We can further simplify the formal series. For convenience, we use $q_k$ to denote some symbol in $S^{k}$ and it may vary from line to line. Then
		\begin{align*}
			M_\la(x,\xi)&\sim \sum_{\ell=0}^\infty\sum_{j=0}^\infty\sum_\alpha \sum_{0\le \alpha_1\le \alpha}\sum_{0\le \alpha_2\le \alpha+2j}m_\la^{(\ell+\alpha_1+\alpha_2)}(p(x,\xi))q_{\alpha_1+j-(\alpha+2j-\alpha_2)}(x,\xi)\\
			&=\sum_{\ell=0}^\infty\sum_{j=0}^\infty\sum_\alpha \sum_{0\le k\le 2\alpha+2j}m_\la^{(\ell+k)}(p(x,\xi))q_{k-\alpha-j}(x,\xi)\\
			&=\sum_{\ell=0}^\infty\sum_{j=0}^\infty\sum_\alpha \sum_{0\le k\le \min(\ell,2\alpha+2j)}m_\la^{(\ell)}(p(x,\xi))q_{k-\alpha-j}(x,\xi)\\
			&=\sum_{\ell=0}^\infty m_\la^{(\ell)}(p(x,\xi))\tilde q_{\ell}(x,\xi)
		\end{align*}
		where $\tilde q_\ell\in S^{\lfloor\ell/2\rfloor}$.
		The leading term only comes from the term with $\ell=j=\alpha=0$, so it is exactly
		\[m_\la(p(x,\xi))q(0,x,x,\xi)\]
		which is $m_\la(p(x,\xi)) \mod S^{\mu_1-1}$.
		\bibliographystyle{plain}
		
	\end{document}